\newtheorem{theorem}{Theorem}[section]
\newtheorem{proposition}[theorem]{Proposition}
\newtheorem{definition}[theorem]{Definition}
\newtheorem{lemma}[theorem]{Lemma}
\newtheorem{example}[theorem]{Example}
\newtheorem{observation}[theorem]{Observatio
n}
\newtheorem{remark}[theorem]{Remark}
\newcounter{cases}
\newcounter{subcases}[cases]
\newenvironment{mycases}
{%
 \setcounter{cases}{0}%
 \setcounter{subcases}{0}%
 \def\case
 {%
  \par\noindent
  \refstepcounter{cases}%
  \textbf{Case \thecases.}
 }%
 \def\subcase
 {%
  \par\noindent
  \refstepcounter{subcases}%
  \textit{Subcase (\thesubcases):}
 }%
}
{%
 \par
}
\renewcommand*\thecases{\arabic{cases}}
\renewcommand*\thesubcases{\roman{subcases}}
\newlist{steps}{enumerate}{1}
\setlist[steps,1]{label=Step \arabic*:}
\numberwithin{equation}{section}
\begin{document}
\nolinenumbers

\title{Additive actions on hyperquadrics of corank two}
\author{yingqi liu}
\date{\today}
\address{yingqi liu, Academy of Mathematics and Systems Science, Chinese Academy of Sciences, Beijing, 100190, China}
\email{liuyingqi@amss.ac.cn}
\subjclass[2010]{Primary 14L30; \ Secondary 14J50, 15A69}
\keywords{Commutative unipotent group action, hyperquadric of corank two, symmetric bilinear form.}              
\maketitle

\begin{abstract}
 For a projective variety $X$ in $\mathbb{P}^{m}$ of dimension $n$, an additive action on $X$ is an effective action of $\mathbb{G}_{a}^{n}$ on $\mathbb{P}^{m}$ such that $X$ is $\mathbb{G}_{a}^{n}$-invariant and the induced action on $X$ has an open orbit. Arzhantsev and Popovskiy have classified additive actions on hyperquadrics of corank 0 or 1. In this paper, we give the classification of additive actions on hyperquadrics of corank 2 whose singularities are not fixed by the $\mathbb{G}_{a}^{n}$-action.

\end{abstract}

\maketitle

\section{Introduction}
\subsection{Main results} Throughout the paper, we work over an algebraically closed field $\mathbb{K}$ of characteristic zero. Let $\mathbb{G}_{a}=(\mathbb{K},+)$ be the additive group of the field and $\mathbb{G}_{a}^{n}=\mathbb{G}_{a} \times \mathbb{G}_{a} \times ... \times \mathbb{G}_{a}$($n$ times) be the vector group. In this article we study additive actions on projective varieties defined as follows.
\begin{definition}
 Let X be a closed subvariety of dimension $n$ in  $\mathbb{P}^{m}$. An additive action on X is an effective algebraic group action $\mathbb{G}_{a}^{n}\times \mathbb{P}^{m} \rightarrow \mathbb{P}^{m}$ such that X is $\mathbb{G}_{a}^{n}$-invariant and the induced action $\mathbb{G}_{a}^{n}\times X \rightarrow X$ has an open orbit $O$. Two additive actions on X are said to be equivalent if one is obtained from another via an automorphism of $\mathbb{P}^{m}$ preserving $X$.
\end{definition}
In the following we represent an additive action on $X$ by a pair $(\mathbb{G}_{a}^{n},X)$ or a triple $(\mathbb{G}_{a}^{n},X,\textbf{L})$, where $\textbf{L}$ is the underlying projective space. We define $X \backslash O$ to be the boundary of the action and define $l(\mathbb{G}_{a}^{n},X)$ to be the maximal dimension of orbits in the boundary. For a group action of $G$ on a set $S$, we define the set of fixed points under the action to be $Fix(S)=\{x \in S \, | \, g \cdot x =x, \text{for any } g \in G \} $.  We say a subset in the projective space is non-degenerate if it is not contained in any hyperplane. \par
Recall that a $\mathbb{G}_{a}^{n}$-action on $\mathbb{P}^{m}$ is induced by a linear representation of $\mathbb{G}_{a}^{n}$, namely write $\mathbb{P}^{m}=\mathbb{P}V$ for an $(m+1)$-dimensional vector space $V$, then the action is given by:
 \begin{align*}
\mathbb{G}_{a}^{n} \times  \mathbb{P}V \mapsto  \mathbb{P}V \\
(g,[v])  \mapsto  [\rho(g)(v)]
\end{align*}
where $\rho : \mathbb{G}_{a}^{n} \mapsto GL(V)$ is a rational representation of the vector group $\mathbb{G}_{a}^{n}$. In \cite{HT} Hassett and Tschinkel showed that if the action is faithful and has a non-degenerate orbit $O$ in $\mathbb{P}^{m}$, then the vector space $V$ can be realized as a finite dimensional local algebra. They identified additive actions on projective spaces with certain  finite dimensional local algebras. The simplest additive action on a projective space is the one with fixed boundary, it is unique and can be given explicitly as follows.
 \begin{align*}
 \mathbb{G}_{a}^{m} \times \mathbb{P}^{m} \mapsto & \mathbb{P}^{m} \\
	(g_{1},...,g_{m}) \times [x_{0}:x_{1}:...:x_{m}] \mapsto & [x_{0}:x_{1}+g_{1}x_{0}:...:x_{m}+g_{m}x_{0}]
 \end{align*}
In \cite{AP} Arzhantsev and Popovskiy identified additive actions on hypersurfaces in $\mathbb{P}^{n+1}$ with invariant $d$-linear symmetric forms on $(n+2)$-dimensional local algebras. As an application they obtained classifications of additive actions on hyperquadrics of corank 0 and 1, where the corank of a hyperquadric $Q$ is the corank of the quadratic form defining $Q$. Given an additive action on a hyperquadric $Q$, if $corank(Q)=0$ (i.e., $Q$ is smooth), then the action is unique up to equivalences (also cf. \cite{Sharoyko2009HassettTschinkelCA}) and $l(\mathbb{G}_{a}^{n},Q)=1$. If $corank(Q)=1$, then the action is determined by a symmetric matrix up to an orthogonal transformation, adding a scalar matrix and a scalar multiplication (cf. \cite[Proof of Proposition 7]{AP}), namely for two  symmetric matrices $\Lambda$ and $\Lambda'$, they determine the same action if and only if there exist a nonzero $a \in \mathbb{K}$, $h \in \mathbb{K}$ and an orthogonal matrix $A$ (i.e., $A^\intercal A=I$) such that $\Lambda'=A^\intercal (a \Lambda+hI )A$. In this case, the action has fixed singular locus and $l(\mathbb{G}_{a}^{n},Q)=2$.\par 
In this paper, we study additive actions on hyperquadrics of corank 2. In this case the action is determined by two symmetric bilinear forms on a certain finite dimesnional local algebra. The singular locus, which is a projective line, is either fixed by the action or is the union of a orbit  and a fixed point.\par
When the singular locus is fixed by the $\mathbb{G}_{a}^{n}$-action,  it is a natural generalization of the case when $corank(Q)=1$. In this case, using a similar method as in \cite[Proposition 7]{AP} one can see that the action is determined by a pair of symmetric matrices up to a simultaneous orthogonal similarity and an affine transformation of pais of matrices, namely for two pairs of symmetric matrices $(\Lambda_{1},\Lambda_{2})$ and $(\Lambda'_{1},\Lambda'_{2})$, they determine the same action if and only if there exist $a_{11},a_{12},a_{21},a_{22},h_{1},h_{2} \in \mathbb{K}$ with $a_{11}a_{22}-a_{12}a_{21} \not=0$ and an orthogonal matrix $A$ such that:
\begin{align*}
\Lambda'_{1}=A^\intercal (a_{11} \Lambda_{1}+a_{12} \Lambda_{2}+h_{1}I )A\\
\Lambda'_{2}=A^\intercal (a_{21} \Lambda_{1}+a_{22} \Lambda_{2}+h_{2}I )A
\end{align*}
\begin{remark}
	For $\mathbb{K}=\mathbb{C}$, the problem of classifying pairs of matrices under simultaneous similarity is solved explicitly by Friedland \cite{Fr}. As an application, for almost all pairs of symmetric matrices $(A,B)$, the characteristic polynomial $|\lambda I-(A+xB)|$ determines a finite number of similtaneous orthogonal similarities classes. 
\end{remark} 
 In this paper we focus on the case when the action has unfixed singularities, our main observation is that under the identification, one of the bilinear forms vanishes on a certain hyperplane of the maximal ideal. As a result, the action can be recovered from two kinds of simpler actions which has been classified before. One is an action on a projective space with fixed boundary, the other one is an action on a hyperquadric of corank $r\geqslant 2$, which can be simply recovered from an action on a hyperquadric of corank one as follows.

\begin{definition}\label{simp_quad_action}
 Let $Q$ be a hyperquadric of corank one  in  $\textbf{P}=\mathbb{P}V$ with an additive action induced by $\rho: \mathbb{G}_{a}^{n} \mapsto GL(V)$. Choose an element $\alpha$ in the open orbit $O$. For any $r \geqslant 1$, viewing $\textbf{P}$ as a subspace of $\textbf{P}'=\mathbb{P}^{n+r}$ of codimension $r$ and write the coordinate of $\textbf{P}$ and $\textbf{P}'$ to be $[v]=[x_{0},x_{1}:...:x_{n}]$ and $[v,z]=[v:z_{1}:...:z_{r}]$ respectively, where $\alpha= [1:0:...:0]$. Let $\textbf{L}=\{v=0\} \subseteq  \textbf{P}'$ and  $\widetilde{Q}$ be the projective cone over $Q$ with vertex being $\textbf{L}$. Then we extend the action on $Q$ to $\widetilde{Q}$ as follows. \par
	Write $\mathbb{G}_{a}^{n+r}=\mathbb{G}_{a}^{n} \times \mathbb{G}_{a}^{r}=\{(g,h):g \in \mathbb{G}_{a}^{n}, h \in \mathbb{G}_{a}^{r}\}$  , then the action $(\mathbb{G}_{a}^{n+r},\widetilde{Q})$ is defined to be:
	\begin{align*}
	 \mathbb{G}_{a}^{n+r}  \times \widetilde{Q} \mapsto & \widetilde{Q} \\
	(g,h) \times [v:z] \mapsto & [\rho(g)(v):z+x_{0} \cdot h]
	\end{align*}
     If we extend the action using another element $\alpha' \in O$, then the induced action on $\widetilde{Q}$ is equivalent to the previous action through an linear isomorphism $\phi$ of $P'$ such that $\phi(P)=P,\phi(\alpha)=\alpha'$ and $\phi_{|_{L}}=id_{L}$. Hence the definition of the extended action on $\widetilde{Q}$ is unique up to equivalences. We call the extended action is simply recovered from the given action $(\mathbb{G}_{a}^{n},Q)$.
     
\end{definition}
\begin{remark}\label{rem_fix_sing_quad_dertermined}
Geometrically the action on $\widetilde{Q}$ is extended by the action on $Q$ through an action on a projective space with fixed boundary. Note that $\widetilde{Q}$ is contained in the linear span $<L_{\alpha},D>$, where  $L_{\alpha}$ is the cone over $\textbf{L}$ with the vertex being $\alpha$ ,  $D$ is the boundary $Q \backslash O$. Hence the action of $\mathbb{G}_{a}^{n+r}=\mathbb{G}_{a}^{n} \times \mathbb{G}_{a}^{r}$ on $\widetilde{Q}$ is  determined by its action on $L_{\alpha}$ and $D$, which is rather simple: the action of $\mathbb{G}_{a}^{n}$ on $L_{\alpha}$ and the action of $\mathbb{G}_{a}^{r}$ on $D$ are both trival while the action of $\mathbb{G}_{a}^{r}$ on $L_{\alpha}$ is an additive action on the projective space with fixed boundary.
\end{remark}
Now to recover a given action by a simpler action, we introduce an operation for any given additive action on a hyperquadric with unfixed singularities or an action on a projective space with unfixed boundary. We start with the following definition.
\begin{definition}
	  Let $X$ in $\mathbb{P}^{m}$ be a hyperquadric or a projective space with an additive action, $O$ being the open orbit.
	\[
	K(X)=
	\begin{cases*}
	Sing(X) &\text{if $X$ is a hyperquadric}\\
	X \backslash O & \text{if $X$ is a projective space}
	\end{cases*}
	\] 
\end{definition}
\begin{theorem}\label{degeneration_construction}
	For an additive action on $X$ in $\mathbb{P}^{m}$, where $X$ is either a hyperquadric or a projective space with open orbit $O$ such that $K(X) \nsubseteq Fix(X)$. Choose $x_{0} \in O$. Let $G^{(1)}=\cap_{x \in K(X)}G_{x}$ and let  $\textbf{L}^{(1)}$ be the linear span of $G^{(1)} \cdot x_{0}$, then:\\
	(i) $\textbf{L}^{(1)} \subsetneq \mathbb{P}^{m }$.\\
	(ii) $\textbf{L}^{(1)}$ is $G^{(1)}$-invariant and the action of $G^{(1)}$ on $\textbf{L}^{(1)}$ induces an additive action on $Q^{(1)}=\overline{G^{(1)}\cdot x_{0}}  \subseteq \textbf{L}^{(1)}$ with the open orbit $O^{(1)}=G^{(1)}\cdot x_{0}$, where $Q^{(1)}$ is either a non-degenerate hyperquadric or the whole projective space $\textbf{L}^{(1)}$.
\end{theorem}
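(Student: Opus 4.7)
The plan is to translate the statement into commutative algebra via the Hassett--Tschinkel realization of additive actions, then deduce both parts from properties of a natural subalgebra. Writing $\mathbb{P}^{m}=\mathbb{P}V$, let $\rho:\mathbb{G}_{a}^{n}\to\mathrm{GL}(V)$ be the representation underlying the action, $\mathcal{N}:=d\rho(\mathrm{Lie}(\mathbb{G}_{a}^{n}))\subseteq\mathrm{End}(V)$, and $\mathcal{A}:=\mathbb{K}I+\mathcal{N}+\mathcal{N}^{2}+\cdots$ the commutative local subalgebra it generates in $\mathrm{End}(V)$. Non-degeneracy of $X$ makes a lift $\tilde{x}_{0}\in V$ of $x_{0}$ an $\mathcal{A}$-cyclic vector, so $T\mapsto T\tilde{x}_{0}$ is an isomorphism $\mathcal{A}\xrightarrow{\sim}V$. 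Since $\rho$ is unipotent, the stabilizer of any $[\tilde{y}]\in\mathbb{P}V$ equals $\exp(\{\xi\in\mathcal{N}:\xi\tilde{y}=0\})$, whence
\[
G^{(1)}=\exp(\mathcal{N}^{(1)}),\qquad \mathcal{N}^{(1)}:=\{\xi\in\mathcal{N}:\xi W=0\},
\]
where $W\subseteq V$ is the linear span of lifts of $K(X)$ -- equal to the radical of the defining form when $X$ is a hyperquadric, and to the maximal ideal of $\mathcal{A}$ when $X=\mathbb{P}^{m}$. Setting $\mathcal{A}^{(1)}:=\mathbb{K}I+\mathcal{N}^{(1)}+(\mathcal{N}^{(1)})^{2}+\cdots$, differentiating $\exp(\sum t_{i}\xi_{i})\tilde{x}_{0}$ at the origin identifies the linear span of the orbit as $\textbf{L}^{(1)}=\mathbb{P}(\mathcal{A}^{(1)}\tilde{x}_{0})$.

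For (i), since $\tilde{x}_{0}$ is also $\mathcal{A}^{(1)}$-cyclic, the claim $\textbf{L}^{(1)}\subsetneq\mathbb{P}^{m}$ reduces to $\mathcal{A}^{(1)}\subsetneq\mathcal{A}$. If instead $\mathcal{A}^{(1)}=\mathcal{A}$, then every nilpotent $\xi_{0}\in\mathcal{N}$ is expressible as a polynomial in $\mathcal{N}^{(1)}$ with zero constant term. The hypothesis $K(X)\nsubseteq\mathrm{Fix}(X)$ furnishes a lift $\tilde{y}\in W$ and $\xi_{0}\in\mathcal{N}$ with $\xi_{0}\tilde{y}\neq 0$; but $\mathcal{N}^{(1)}W=0$ by construction, so every polynomial in $\mathcal{N}^{(1)}$ without constant term annihilates $\tilde{y}$, contradicting $\xi_{0}\tilde{y}\neq 0$.

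For (ii), the $\mathcal{A}^{(1)}$-invariance of $\mathcal{A}^{(1)}\tilde{x}_{0}$ gives at once the $G^{(1)}$-invariance of $\textbf{L}^{(1)}$. The orbit $O^{(1)}$ is irreducible and open in its closure $Q^{(1)}$, and since $\textbf{L}^{(1)}=\langle O^{(1)}\rangle$ by construction, $Q^{(1)}$ is non-degenerate in $\textbf{L}^{(1)}$. When $X=\mathbb{P}^{m}$, $\mathcal{N}^{(1)}=\mathrm{socle}(\mathcal{A})$ forces $(\mathcal{N}^{(1)})^{2}=0$; thus $\mathcal{A}^{(1)}=\mathbb{K}\oplus\mathcal{N}^{(1)}$ and $\exp(\xi)=1+\xi$, so $O^{(1)}=\{[1+\xi]:\xi\in\mathcal{N}^{(1)}\}$ is the dense affine chart of $\mathbb{P}(\mathbb{K}\oplus\mathcal{N}^{(1)})=\textbf{L}^{(1)}$ and $Q^{(1)}=\textbf{L}^{(1)}$. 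When $X$ is a hyperquadric with invariant symmetric form $B$, invariance of $B$ and isotropy of $\tilde{x}_{0}$ force $O^{(1)}\subseteq X\cap\textbf{L}^{(1)}$; the latter is either all of $\textbf{L}^{(1)}$ (if $B|_{\mathcal{A}^{(1)}\tilde{x}_{0}}\equiv 0$) or a hyperquadric of codimension one cut out by this restricted form.

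The main obstacle is completing the hyperquadric case: proving that $Q^{(1)}$ equals either $\textbf{L}^{(1)}$ or the full intersection $X\cap\textbf{L}^{(1)}$, with non-degenerate restricted form in the latter case. Since $\dim Q^{(1)}=\dim \mathcal{N}^{(1)}$ and $\dim \textbf{L}^{(1)}=\dim \mathcal{M}^{(1)}$ (with $\mathcal{M}^{(1)}$ the maximal ideal of $\mathcal{A}^{(1)}$), this amounts to bounding $\dim \mathcal{M}^{(1)}/\mathcal{N}^{(1)}\leq 1$, together with a non-degeneracy statement for $B|_{\mathcal{A}^{(1)}\tilde{x}_{0}}$ in the equality case. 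I plan to establish these by combining the skew-symmetry $B(\xi v,w)=-B(v,\xi w)$ (for $\xi\in\mathcal{N}$) with the identity $\mathcal{N}^{(1)}W=0$ to pin down $(\mathcal{N}^{(1)})^{k}\tilde{x}_{0}$ modulo $\mathcal{N}^{(1)}\tilde{x}_{0}$ for $k\geq 2$, and then reducing the residual non-degeneracy question to the corank-one analysis of \cite[Proposition 7]{AP} applied within $\textbf{L}^{(1)}$.
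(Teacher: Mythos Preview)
Your overall framework coincides with the paper's: both pass to the Hassett--Tschinkel local algebra $(R,W)$, identify the Lie algebra of $G^{(1)}$ with the annihilator subspace $V^{(1)}=\{\alpha\in W:\alpha\cdot\mathrm{Ker}(F)=0\}$ (your $\mathcal{N}^{(1)}$), and take the subalgebra it generates (your $\mathcal{A}^{(1)}$, the paper's $R^{(1)}$). Your proofs of (i) and of the projective-space half of (ii) are correct and essentially match the paper's.

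The gap is exactly where you locate it, but your plan to close it is partly off-target. First, ``non-degenerate hyperquadric'' in the statement means that $Q^{(1)}$ spans $\textbf{L}^{(1)}$, not that the restricted form has full rank; the former is automatic from the definition of $\textbf{L}^{(1)}$, so there is no residual ``non-degeneracy question'', and the appeal to \cite[Proposition~7]{AP} (a corank-one classification statement) is irrelevant here. Second, the skew-symmetry route is more circuitous than needed. The paper's argument is a direct subalgebra computation: when $V^{(1)}\neq V_{(1)}$, choose $a,a'\in V^{(1)}$ with $F(a,a')=1$ and set $b_{0}:=a\cdot a'\in\mathfrak{m}^{2}\setminus W$; then $b_{0}\cdot\mathrm{Ker}(F)=0$ since $a\in V^{(1)}$, and for any $c,c'\in V^{(1)}$ the $W$-component $(cc')_{|W}=cc'-y_{0}(cc')\,b_{0}$ also annihilates $\mathrm{Ker}(F)$, hence lies in $V^{(1)}$. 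Thus $V^{(1)}\cdot V^{(1)}\subseteq V^{(1)}\oplus\langle b_{0}\rangle$, which gives $\dim\mathcal{M}^{(1)}/\mathcal{N}^{(1)}=1$ in one stroke. One then checks $(\mathfrak{m}^{(1)})^{2}\not\subseteq V^{(1)}$ while $(\mathfrak{m}^{(1)})^{3}\subseteq\mathfrak{m}^{3}\cap R^{(1)}\subseteq V^{(1)}$, so by Theorem~\ref{correspondence_hypersurface} together with \cite[Theorem~5.1]{Arzhantsev2009HassettTschinkelCM} the pair $(R^{(1)},V^{(1)})$ corresponds to an additive action on a hyperquadric in $\mathbb{P}(R^{(1)})$. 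That hyperquadric is then automatically the irreducible orbit closure $Q^{(1)}$, with no separate form-rank or irreducibility verification required.
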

We furtherly define when such an operation is effective for our classification.
\begin{definition}\label{effective}
	Let $Q$ be a hyperquadric with an additive action such that $Sing(X) \nsubseteq Fix(X)$, we say the operation obtained in Theorem \ref{degeneration_construction}: $(\mathbb{G}_{a}^{n},Q,\mathbb{P}^{n+1}) \mapsto (G^{(1)},Q^{(1)},\textbf{L}^{(1)})$ is effective if $K(Q) \subsetneqq K(Q^{(1)})$.
\end{definition}
Starting from a given additive action on the  hyperquadric $Q$ with unfixed singularities, the operation defined in Theorem 1.6 and the effective condition in Definition 1.7 give a procedure of reducing the present action to a lower dimensional one, which has to terminate as the dimension of the underlying projective space decreases strictly by Theorem 1.6 (i).  The procedure ends in three different ways,  which we call Type $A$, Type $B$ and Type $C$. We use the following flow chart to represent the procedure.  \par
\begin{figure}[H]\label{diagram_geo}
	\centering
	\tikzstyle{io} = [trapezium, trapezium left angle=70, trapezium right angle=110, minimum width=2.5cm, minimum height=0.8cm, text centered, draw=black, fill=blue!30]
	\tikzstyle{process} = [rectangle, minimum width=2.5cm, minimum height=0.8cm, text centered, draw=black]
	\tikzstyle{decision} = [rectangle, minimum width=1.0cm, minimum height=0.8cm, text centered, draw=black]
	\tikzstyle{arrow} = [thick,->,>=stealth]
	\begin{tikzpicture}[node distance=1.15cm]   
	\node (pro0) [process] {$G^{(0)}=\mathbb{G}_{a}^{n},Q^{(0)}=Q$};
	\node (dec1) [decision,below of= pro0,yshift=-0.6cm] {$K(Q^{(k)}) \subseteq Fix(Q^{(k)})$}; 
	\node (pro1) [process,right of =dec1,xshift=3.0cm,yshift=0.6] {$output$ $(A,k)$};
	\node (dec2) [decision,below of = dec1,yshift=-0.4cm] {$K(Q^{(k)})\subseteq K(Q^{(k+1)})$};
	\node (pro2)[process,right of = dec2,xshift=3.0cm,yshift=0.0cm] {$output$ $(B,k+1)$};
	\node (dec3) [decision,below of =dec2,yshift=-0.6cm] {$K(Q^{(k)})=K(Q^{(k+1)})$};
	\node (pro4) [process,right of =dec3,xshift=3.0cm,yshift=0.0cm] {$output$ $(C,k+1)$};
	\node (pro5) [process,left of =dec3,xshift=-3.8cm,yshift=0.0cm] {$k=k+1$};
	
	\draw [arrow] (pro0) -- node[anchor=east]{$k=0$}(dec1);
	\draw [arrow] (dec1) -- node [anchor=south] {yes} (pro1);
	\draw [arrow] (dec1) -- node [anchor=east] {no} (dec2) ;
	\draw [arrow] (dec2) -- node [anchor=south]{no} (pro2);
	\draw [arrow] (dec2) -- node [anchor=east] {yes} (dec3);
	\draw [arrow] (dec3) -- node [anchor=south] {yes} (pro4);
	\draw [arrow] (dec3) -- node [anchor=south]  {no}  (pro5);
	\draw [arrow] (pro5) |- (dec1) ;
	\end{tikzpicture}
\end{figure}
\noindent where for each $k$, if $K(Q^{(k)}) \nsubseteq Fix(Q^{(k)})$, let $(G^{(k)},Q^{(k)})\mapsto (G^{(k+1)},Q^{(k+1)})$ be the operation obtained in Theorem \ref{degeneration_construction}.\par
We use $(x,t,G^{(t)},Q^{(t)})$ to represent the final output of the flow chart, where $(x,t)$ is the output of the flow chart and $(G^{(t)},Q^{(t)})$ is the corresponding action.\par 
In the case of corank two, the following theorem shows that the flow chart conversely gives the explicit process of recovering and together with $l(G_{a}^{n},Q)$ the final output determines the action up to equivalences. 
\begin{theorem}\label{determine_by_degeneration}
	Let $Q$ be an hyperquadric of corank two with an additive action, assume the action has  unfixed singularities and $dim(Q) \geqslant 5$, let $(x,t,G^{(t)},Q^{(t)})$ be the final output of the flow chart above. Then:\par
	(i) $(G^{(t)},Q^{(t)})$ is either an action on a projective space with fixed boundary or an action on  a hyperquadric given in Definition \ref{simp_quad_action}.\par
	(ii)  $l(\mathbb{G}_{a}^{n},X) \leqslant 3$ and $codim(Q^{(k+1)},Q^{(k)})=1$, for any $k \leqslant t-1$.\par
	(iii) if $(\mathbb{G}_{a}^{n},\widetilde{Q})$ is another additive action on the hyperquadric of corank two $\widetilde{Q}$ with unfixed singularities and $dim(\widetilde{Q}) \geqslant 5$, let $(x',t',\widetilde{G}^{(t')},\widetilde{Q}^{(t')})$ be the final output of the flow chart, then $(\mathbb{G}_{a}^{n},Q)$ is equivalent to $(\mathbb{G}_{a}^{n},\widetilde{Q})$ if and only if $l(\mathbb{G}_{a}^{n},Q)=l(\mathbb{G}_{a}^{n},Q'),x=x',t=t'$ and $(G^{(t)},Q^{(t)})$ is equivalent to $(\widetilde{G}^{(t')},\widetilde{Q}^{(t')})$.
\end{theorem}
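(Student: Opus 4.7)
The plan is to exploit the algebraic description of additive actions on hyperquadrics via Arzhantsev--Popovskiy's identification with invariant symmetric bilinear forms on finite-dimensional local algebras, combined with the author's announced observation that in the corank-two case with unfixed singularities one of the two bilinear forms vanishes on a hyperplane of the maximal ideal. Under this identification, the subgroup $G^{(1)}=\bigcap_{x\in K(X)}G_{x}$ and the linear span $\textbf{L}^{(1)}$ appearing in Theorem~\ref{degeneration_construction} should admit clean descriptions, and each effective reduction should amount to replacing the underlying local algebra by the quotient by the distinguished hyperplane and restricting the bilinear forms accordingly. My first step is to make this algebraic correspondence precise, and to confirm that the reduced data $(G^{(k+1)},Q^{(k+1)})$ is again governed by a smaller local algebra together with one or two symmetric bilinear forms.

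With this algebraic machinery in place, part (i) follows by showing that after finitely many effective reductions the quadric form drops to corank one with fixed singular locus (so the result is a simply-recovered action in the sense of Definition~\ref{simp_quad_action}, extended along the preserved hyperplane) or degenerates to the whole ambient span (yielding an additive action on a projective space with fixed boundary, by the Hassett--Tschinkel classification). Part (ii) has two ingredients. The bound $l(\mathbb{G}_a^n,Q)\leq 3$ should emerge from a stratification of the boundary by the corank strata of the restricted bilinear forms: the smooth stratum of the boundary contributes orbits of dimension at most $n-1$, the corank-one stratum contributes orbits of dimension at most $n-2$, and the singular line decomposes as a single orbit together with a fixed point of dimension at most $1$. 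The equality $\mathrm{codim}(Q^{(k+1)},Q^{(k)})=1$ should follow directly from the hyperplane observation, since each effective reduction amounts to intersection with a single hyperplane in the ambient space.

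Part (iii), the equivalence statement, is the main obstacle. The strategy is to invert the flow chart step by step: given the final output $(G^{(t)},Q^{(t)})$, perform $t$ successive extensions, each either a fixed-boundary projective-space extension or a simple recovery in the sense of Definition~\ref{simp_quad_action}, where the type label $x$ dictates the combinatorial pattern of the last reverse step (whether the re-introduced singular locus sits inside, meets, or is disjoint from the previous one), and the integer $t$ counts how many reverse steps to perform. The $l$-invariant pins down the remaining freedom by recording which boundary orbit configuration is realized among those compatible with the reductions. To show that this reverse construction is canonical, I would compute at each reverse step the ambiguity group of linear isomorphisms preserving the extended quadric and the action, identify it with the simultaneous orthogonal-affine similarity group on pairs of symmetric matrices described in the introduction, and verify that these ambiguities compose compatibly so that equivalence of final outputs lifts to equivalence of the original actions. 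The hypothesis $\dim(Q)\geq 5$ should enter here to guarantee that each intermediate quadric carries enough room for the orbit structure to be faithful under the reductions and to avoid the low-dimensional coincidences in the simultaneous-similarity classification alluded to in Remark~1.3.
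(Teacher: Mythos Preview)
Your overall strategy of passing to the local-algebra model is right, but there are two concrete gaps.

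First, your argument for $l(\mathbb{G}_a^n,Q)\le 3$ in (ii) does not work. Saying that the smooth stratum of the boundary has orbits of dimension at most $n-1$, the corank-one stratum at most $n-2$, etc., gives only trivial bounds, not $l\le 3$. The paper's mechanism is different and sharper: one shows (Proposition~4.8) that $l(\mathbb{G}_a^n,Q)=\dim\mathfrak{m}^2$, and then the structural result $\mathfrak{m}^2\subseteq\langle \mu_1,\mu_2,b_0\rangle$ (Lemma~2.10 and Proposition~3.4) immediately yields $\dim\mathfrak{m}^2\le 3$. Your stratification idea does not access $\dim\mathfrak{m}^2$. Also, a technical slip: the reduced data $(G^{(1)},Q^{(1)})$ does not arise from a \emph{quotient} of the local algebra but from a \emph{subalgebra} $R^{(1)}=V^{(1)}\oplus\langle b_0,1_R\rangle$ (or $V^{(1)}\oplus\langle 1_R\rangle$); this matters for how the bilinear form restricts and for why $\mathrm{codim}(Q^{(k+1)},Q^{(k)})=1$ reduces to $\mathrm{codim}(V^{(k+1)},V^{(k)})=1$ (Proposition~4.1).

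Second, for (iii) your plan to ``invert the flow chart'' and track ambiguity groups is in spirit dual to what the paper does, but as stated it is missing the key invariant. The paper proceeds by first normalizing the full triple $(R,W,F)$ to an explicit form depending on the type, on $s$ (equivalently $t$), on $\delta=\dim\mathfrak{m}^2-2$, and on a canonical symmetric matrix $\Lambda$ coming from the restriction of $V_1$ to $V^{(s)}$ (Lemmas~4.2, 4.4, 4.6). Theorem~4.9 then shows that two actions are equivalent iff the types, $s$, $\delta$ agree and the matrices $\Lambda,\Lambda'$ differ by an orthogonal similarity, scalar multiple, and addition of a scalar matrix. The output action $(G^{(t)},Q^{(t)})$ encodes precisely $\Lambda$ (via the corank-one classification), which is why equivalence of outputs plus $(x,t,l)$ suffices. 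Your sketch never isolates $\Lambda$, and without it you cannot control the continuous moduli when you try to lift an equivalence of outputs through $t$ extension steps; the ``ambiguity group'' you mention is exactly the group of elementary transformations of $\Lambda$, and identifying it requires the normalization you are trying to avoid.
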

	 Combining Remark \ref{rem_fix_sing_quad_dertermined} with classification of actions on hyperquadrics of corank one, we can determine the output action $(G^{(t)},Q^{(t)})$ explicitly. Then by Theorem \ref{determine_by_degeneration}, we can give classification of additive actions on hyperquadrics of corank two with unfixed singularities in terms of the final output of the flow chart.  
\begin{theorem}\label{classfication}
	Let $Q$ be a hyperquadric of corank two, then additive action on $Q$ with unfixed singularities has equivalence type as follows:\par
	(a) $dim(Q) \geqslant 5$. Let the final output in the flow chart be $(x,t,G^{(t)},Q^{(t)})$ 
	then we separate it into 8 different types with respect to the value of $x,t$ and whether $Q^{(t)}$ is a projective space or a hyperquadric:\par
	(a.1) Type $x_{0}$: if $x \in \{B,C\}$   and $t=1$.\par
	(a.2) Type $x_{1}$: if $x \in \{A,B,C\}$, $t \geqslant 2$ when $x \in \{B,C\}$ and  $Q^{(t)}$ is a projective space.\par
	(a.3) Type $x_{2}$: if $x \in \{A,B,C\}$, $t \geqslant 2$ when $x \in \{B,C\}$ and  $Q^{(t)}$ is a hyperquadric.\par	
    (b) $dim(Q) \leqslant 4$: there are 14 different types.
\end{theorem}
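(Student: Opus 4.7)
The plan is to derive part (a) as a corollary of Theorem \ref{determine_by_degeneration} together with the known classification of the two kinds of building blocks, and then handle part (b) by direct enumeration in low dimensions.

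For part (a), Theorem \ref{determine_by_degeneration} reduces the equivalence problem for $(\mathbb{G}_{a}^{n},Q)$ to the triple consisting of $l(\mathbb{G}_{a}^{n},Q)$, the termination symbol $(x,t)$ of the flow chart, and the equivalence class of the terminal action $(G^{(t)},Q^{(t)})$. So I would first enumerate the possible shapes of $(G^{(t)},Q^{(t)})$. By Theorem \ref{determine_by_degeneration} (i), this terminal action is either an additive action on a projective space with fixed boundary, or a hyperquadric action simply recovered from a corank-one action in the sense of Definition \ref{simp_quad_action}. The former is unique up to equivalence by the explicit formula recalled in the introduction; the latter, by Remark \ref{rem_fix_sing_quad_dertermined} combined with the Arzhantsev--Popovskiy classification for corank one, is determined by a symmetric matrix modulo orthogonal similarity, addition of a scalar matrix, and a scalar multiplication.

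Next I would split into cases according to $(x,t)$. The dichotomy ``$Q^{(t)}$ a projective space vs.\ a hyperquadric'' combined with the three termination modes $A,B,C$ gives a priori six combinations for $t\geqslant 2$, all of which will turn out to occur and give the types $A_{1},B_{1},C_{1},A_{2},B_{2},C_{2}$. The case $t=1$ is special: here the reduction is done in a single step, so the output is always of the form $x_{0}$ with $x\in\{B,C\}$; one rules out $A$ at $t=1$ because $x=A$ at $t=1$ would force $K(Q)\subseteq Fix(Q)$, contradicting the standing assumption of unfixed singularities. This is how one obtains the two extra types $B_{0},C_{0}$ and in total exactly the 8 types listed. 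Throughout, Theorem \ref{determine_by_degeneration} (ii) ($l\leqslant 3$ and unit codimension at each step) ensures that the invariant $l(\mathbb{G}_{a}^{n},Q)$ is already encoded in $(x,t)$ together with $Q^{(t)}$, so no separate indexing by $l$ is required in the labels.

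For part (b), Theorem \ref{determine_by_degeneration} does not apply, so the flow-chart reduction cannot be invoked wholesale. The plan is instead to use the identification of an additive action on a hyperquadric with an invariant symmetric bilinear form on a finite dimensional local algebra, as in \cite{AP}, and to enumerate finite dimensional local algebras of the small dimensions that support a corank-two invariant form with unfixed singular locus. Since $\dim(Q)\leqslant 4$ bounds the dimension of the local algebra by a small constant, the list of admissible Hilbert functions is short and can be worked through directly, producing exactly 14 equivalence classes.

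The main obstacle will be part (b): verifying that the 14 low-dimensional types are genuinely pairwise inequivalent and that the list is exhaustive, since one cannot appeal to the clean inductive structure of Theorem \ref{determine_by_degeneration}. A secondary difficulty in part (a) is the realizability check, namely producing an explicit additive action matching each of the 8 symbols so that the list of possible outputs is also the list of occurring outputs; for the terminal hyperquadric cases this requires exhibiting a corank-one corank-two extension pair compatible with the simply-recovered construction of Definition \ref{simp_quad_action}.
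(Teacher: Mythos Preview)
Your plan for part (b) matches the paper: Section 4.2 carries out exactly the direct case-by-case enumeration of local algebras of small dimension that you describe.

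For part (a), your approach inverts the paper's logical order. You propose to read off the 8 types from Theorem \ref{determine_by_degeneration}, treating it as a black box. In the paper the dependence goes the other way: the normal forms for each type (Propositions \ref{classify_A}, \ref{classify_B}, \ref{classify_C}) and the uniqueness statement (Theorem 4.9) are established \emph{first}, by explicit algebraic normalization of the triple $(R,W,F)$ along the algebraic structure sequence, and only afterwards is Theorem \ref{determine_by_degeneration} deduced ``as an application of our classification''. So invoking Theorem \ref{determine_by_degeneration} to prove Theorem \ref{classfication} is circular relative to the paper; you would need an independent proof of Theorem \ref{determine_by_degeneration}, which amounts to doing the normalization work anyway. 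On the positive side, realizability is a non-issue in the paper's route: each type is exhibited as an explicit algebra with generators and relations, so existence is automatic.

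Your case analysis also contains two concrete errors. First, you claim that output $(A,1)$ is impossible because it would force $K(Q)\subseteq Fix(Q)$. This conflates $t=0$ with $t=1$: in the flow chart, output $(A,k)$ means $K(Q^{(k)})\subseteq Fix(Q^{(k)})$, so the unfixed-singularity hypothesis excludes $(A,0)$, not $(A,1)$. The output $(A,1)$ does occur (it is the case $s=1$ in the paper's Case $x=A$) and lands in type $A_{1}$ or $A_{2}$; this is exactly why the constraint ``$t\geqslant 2$'' in (a.2)--(a.3) is imposed only for $x\in\{B,C\}$. Second, your claim that $l(\mathbb{G}_{a}^{n},Q)$ is already encoded in $(x,t,Q^{(t)})$ is false: in every normal form the parameter $\delta\in\{0,1\}$ (equivalently $l=\dim\mathfrak{m}^{2}\in\{2,3\}$, Proposition 4.8) lives in the relation $f_{1}^{2}=\delta\cdot\mu_{2}$, and $f_{1}\notin V^{(t)}$, so $\delta$ is invisible in the terminal action. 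This is precisely why Theorem \ref{determine_by_degeneration}(iii) lists $l$ as a separate invariant. The 8 labels of Theorem \ref{classfication} are coarse types, not equivalence classes; within each, both $\delta$ and, for the hyperquadric types, the canonical matrix $\Lambda$ still vary.
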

\begin{remark}
Explicit classification result of each type will be given in Proposition \ref{classify_A},\ref{classify_B},\ref{classify_C} and Section 4.2 in terms of the algebraic structure of finite dimensional local algebras.
\end{remark}
The simplest types are Type $B_{0}$ and Type $C_{0}$, i.e., Type $x_{0}$ for $x \in \{B,C\}$. They can be directly recovered from an additive action on a hyperquadric of corank one. Here we describe actions of Type $B_{0}$ as an example.
\begin{example}
  Let $Q$  be a hyperquadric of corank two in $\mathbb{P}^{n+1}=\mathbb{P}V$ with an additive action, assume $dim(Q) \geqslant 5$ and $Sing(Q) \nsubseteq Fix(Q)$, consider $(G^{(1)},Q^{(1)},\textbf{L}^{(1)})$  obtained in Theorem \ref{degeneration_construction}. If it is of Type $B_{0}$ then: \par
	(i)  $\, Q^{(1)}$ is a hyperquadric of corank one in $\textbf{L}^{(1)}$. \par
	(ii) choose any $\alpha $ in the open orbit $O$ and any $\alpha' \in Sing(Q) \backslash Fix(Q)$ there exist suitable coordinate $\{x_{0},x_{1},x_{2},...,x_{n-1},y_{0},y_{1}\}$ of $\mathbb{P}^{n+1}$ w.r.t the basis of $V$, $\alpha_{0},\alpha_{1},..,\alpha_{n-1},\beta_{0},\beta_{1}$, such that $\alpha=[\beta_{1}]$, $\alpha'=[\alpha_{1}]$, $\textbf{L}^{(1)}=\{x_{1}=0\}$, $Q^{(1)}=\textbf{L}^{(1)} \cap Q$ and 
	\begin{align*}
	Q=\{x_{2}^{2}+...+x_{n-1}^{2}+y_{0} \cdot y_{1}=0\}.
	\end{align*}
	Moreover for $V'=\langle \alpha_{0},\alpha_{2},..,\alpha_{n-1},\beta_{0},\beta_{1} \rangle$ such that $\textbf{L}^{(1)}=\mathbb{P}V'$, let the action $(G^{(1)},Q^{(1)})$ be given by:
	\begin{align*}
	G^{(1)} \times  \textbf{L}^{(1)} \mapsto \textbf{L}^{(1)} \\
	(a,[v'])  \mapsto  [\rho(a)(v')]
	\end{align*}
	 where $\rho : G^{(1)} \rightarrow GL(V')$ is a rational representation of $G^{(1)}$ .\par
	 Then there is a decomposition of $\mathbb{G}_{a}^{n}=G^{(1)} \oplus \mathbb{G}_{a}$ such that if we write\\ $a=(a_{0},a_{2},..,a_{n-1}) \in G^{(1)}$, $s \in G_{a}$, $v=(x_{0},x_{1},..,x_{n-1},y_{0},y_{1}) \in V$ and $v'=(x_{0},0,..,x_{n-1},y_{0},y_{1}) \in V'$ then the action $(\mathbb{G}_{a}^{n},Q)$ is given by:
	\begin{align*}
	\mathbb{G}_{a}^{n} \times  \mathbb{P}^{n+1} \mapsto & \mathbb{P}^{n+1   } \\
	((a,s) \times [v])  \mapsto & [v'']
	\end{align*}
	where $v''=\rho(a)(v')+(\frac{s^{2}y_{1}}{2}+sx_{1}) \cdot \alpha_{0}+(sy_{1}+x_{1}) \cdot \alpha_{1}$.
	
\end{example}
\subsection{Notation and conventions}
Throughout the article, in a given finite dimensional local algebra $R$, we use $\alpha \cdot \beta$ to represent multiplication between two elements in $R$, where $\alpha$ can also be taken as a scalar in $\mathbb{K}$. Furthermore we define the following:\par
(a) if $\alpha \in R$, $V\subseteq R$, then $	\alpha \cdot V \doteq \{\alpha \cdot \beta: \beta \in V\}$\par
(b) if $V,V' \subseteq R$, then $V \cdot V' \doteq \{\sum_{i=1}^{n} \alpha_{i} \cdot \alpha'_{i}: n \in \mathbb{N}, \alpha_{i} \in V, \alpha'_{i} \in V'\}$.
\subsection{Outline of the classification} Given an additive action on a hyperquadric $Q$ in $\mathbb{P}^{n+1}$, there is an $(n+2)$-dimensional local algebra $R$ with a hyperplane $W$ of the maximal ideal $\mathfrak{m}$ and a bilinear form $F$ on R such that 
\begin{align*}
\mathbb{P}^{n+1}=\mathbb{P}(R),Q=\mathbb{P}(\{r \in R: F(r,r)=0\}),
\end{align*} 
and if we choose a basis of $W$, $w_{1},...,w_{n}$, then the action is given by (up to equivalences): 
\begin{align*}
\mathbb{G}_{a}^{n} \times R \mapsto & \,R \\
((a_{1},a_{2},...,a_{n}),  r) \mapsto & \, r \cdot exp(a_{1}w_{1}+...+a_{n}w_{n}).
\end{align*}
 Hence to classify additive actions is equivalent to classify algebraic structures of the triple $(R,W,F)$.  Note that $Sing(Q)=\mathbb{P}(Ker(F))$, furthermore we show that $Ker(F) \subseteq W$ and if we choose a basis of $Ker(F)$, $\mu_{1},..,\mu_{l}$, and choose any $b_{0} \in \mathfrak{m}^{2} \backslash W$ then we can represent the multiplications of elements in $\mathfrak{m}$ as follows:
 \begin{equation}
 a \cdot a'=F(a,a') b_{0} + V_{1}(a,a') \mu_{1} +V_{2}(a,a') \mu_{2}+...+V_{l}(a,a') \mu_{l},
 \end{equation}
 for any $a,a' \in \mathfrak{m}$, where $\{V_{i}: 1 \leqslant  i \leqslant l\}$ is a set of symmetric bilinear forms on $R$. When the corank equals one we have $l=1$ and $\mu_{1} \cdot \mathfrak{m}=0$, also one can choose $b_{0}$ s.t. $b_{0} \cdot \mathfrak{m}=0$. Hence if we extend $\mu_{1}$ to a basis of $W$ namely $\mu_{1},e_{1},..,e_{n-1},$ s.t. $F(e_{i},e_{j})=\delta_{i,j}$ then the multiplication in $\mathfrak{m}$ depends on the matrix $\Lambda=(V_{1}(e_{i},e_{j}))$. Also note that an orthogonal tranformation of the basis $e_{i}$'s with respect to $F$ leads to an orthogonal transformation of the matrix $\Lambda$, hence the classification of the action of corank one is reduced to normalize a symmetric bilinear forms under orthogonal transforamtions (cf. \cite[Proposition 7]{AP} and \cite[Chapter XI, \S 3]{matrices}).
 \par
 When the corank equals two, we can still choose $b_{0}$ s.t. $b_{0} \cdot \mathfrak{m}=0$. We note that in this case the condition $Sing(Q) \nsubseteq Fix(Q)$ enables us to use the idea of the case of corank one.\par
 Firstly we show that $Sing(Q) \nsubseteq Fix(Q)$ is equivalent to $Ker(F) \cdot W \not =0$. And if $Ker(F) \cdot W \not=0$, then we can furtherly define a hyperplane $V^{(1)}$ in $W$: 
 \begin{align*}
 V^{(1)}&=\{\alpha \in W: \alpha  \cdot Ker(F)=0\}\\
 V_{(1)} &=Ker(F_{|_{V^{(1)}}})
 \end{align*}
 By using the correspondence between additive actions and finite dimensional local algebras we show that if $V_{(1)}=V^{(1)}$ then the action $(G^{(1)},Q^{(1)},\textbf{L}^{(1)})$ obtained in Theorem \ref{degeneration_construction} is an action on a projective space and it corresponds to  $(R^{(1)},V^{(1)})$, where $R^{(1)}=V^{(1)} \oplus \langle 1_{R} \rangle$. If $V^{(1)} \not =V_{(1)}$ we show that the action $(G^{(1)},Q^{(1)},\textbf{L}^{(1)})$ corresponds to the triple $(R^{(1)},V^{(1)},F^{(1)})$ where $R^{(1)}=V^{(1)} \oplus \langle b_{0},1_{R} \rangle$, $F^{(1)}=F_{|_{V^{(1)}}}$ and $Q^{(1)}$ is a hyperquadric.\par
  Then our first key step is to show that after choosing suitable $\mu_{1} \in Ker(F)$, we have
  \begin{align*}
  V^{(1)} \cdot W \subseteq \langle \mu_{1},b_{0} \rangle,
  \end{align*} 
 which shows that the bilinear form $V_{2}$ defined in (1.1) vanishes on $V^{(1)}$. For the obtained subspaces $V_{(1)} \subseteq V^{(1)} \subseteq W$, our second key step is that
   if $Ker(F) \nsubseteq V^{(1)}$ (resp. $V_{(1)}=Ker(F)$), which geometrically means $K(Q) \nsubseteq K(Q^{(1)})$ (resp. $K(Q)= K(Q^{(1)})$), then we can directly normalize the multiplications in $\mathfrak{m}$. As a result, we recover action $(\mathbb{G}_{a}^{n},Q)$ from the action $(G^{(1)},Q^{(1)})$, which is an action given in Definition \ref{simp_quad_action}. This corresponds to an output of Type $B_{0}$ or $C_{0}$ in the flow chart.\par
  Otherwise we show that $codim(Ker(F),V_{(1)})=1$ and we furtherly consider the new action $(G^{(1)},Q^{(1)},L^{(1)})$, for which we separate into two more subcases.\par 
  (1) If $V_{(1)} \cdot V^{(1)}=0$ then we are in a situation similar to the case of corank one: $V_{2}=...=V_{l}=0$, $V_{(1)} \cdot V^{(1)}=0$, $b_{0} \cdot V^{(1)}=0$. Hence we can normalize the multiplications in $\mathfrak{m}^{(1)}=V^{(1)} \oplus \langle b_{0} \rangle$. As a result, we recover action $(\mathbb{G}_{a}^{n},Q)$ from the action $(G^{(1)},Q^{(1)})$, which is an action given in Definition \ref{simp_quad_action}. This corresponds to an output $(A,1)$ in the flow chart.\par
  (2) If $V_{(1)} \cdot V^{(1)} \not=0$ then we are in a situation similar to $Ker(F) \cdot W \not=0$, except that in this case the action is on a hyperquadric of corank three. On the other hand, we have $Ker(F) \cdot V^{(1)}=b_{0} \cdot V^{(1)}=0$ and $V_{2}=V_{3}=0$, hence the uncertainity of multiplications in $\mathfrak{m}^{(1)}$ is still one dimensional. For this reason we furtherly define
   \begin{align*}
  V^{(2)}& =\{\alpha \in V^{(1)}: \alpha  \cdot V_{(1)}=0\}\\
  V_{(2)} &=Ker(F_{|_{V^{(1)}}})
  \end{align*} 
  This corresponds to a new action $(G^{(2)},Q^{(2)},L^{(2)})$ in the flow chart, with $L^{(2)} \subsetneqq L^{(1)}$. Similarly we show that if $V_{(1)} \nsubseteq V_{(2)}$ or $V_{(1)} = V_{(2)}$ or $V_{(1)} \subsetneq V_{(2)}$ with $V_{(2)} \cdot V^{(2)}=0$ , then we can already normalize the multiplications in $\mathfrak{m}^{(1)}$. Otherwise we find $V_{(2)} \cdot V^{(2)}\not =0$ then as before we can furtherly define $(V^{(3)},V_{(3)})$ with $V^{(3)} \subsetneqq V^{(2)}$, and check whether it satisfies the conditions to be normalized. The discussion will be continued as above until we find that the present action satisfies the condition to be normalized i.e., to obtain an output in the flow chart, the procedure has to terminate as the dimension of $V^{(i)}$ decreases strictly. As a result we show that the output action is either an action on a projective space with fixed boundary or an action given in Definition \ref{simp_quad_action}. Moreover we obtain a chain of subspaces in $W$ corresponding to the flow chart:  
   \begin{align*}
   Ker(F) \subseteq V_{(1)} \subseteq ... V_{(s)} \subseteq V^{(s)} \subseteq ... \subseteq V^{(1)} \subseteq W,
   \end{align*}
   where $s=t$ if $x=A$, $s=t-1$ if $x=B$ or $x=C$.\par
   Then it remains to normalize the multiplications between elements outside $V^{(s)}$. This is completed through more technical operations shown in Lemmas \ref{norm_A}, \ref{norm_B} and \ref{norm_C}. After the normalization of the structure of $R$, we show the uniqueness of the normalized structure up to equivalences, which proves Theorem \ref{determine_by_degeneration} (iii). And the normalized structure of $(R,W,F)$ gives the explicit result of our classification of actions when $dim(Q) \geqslant 5$. Finally when $dim(Q) \leqslant 4$ we give the classification case by case.\par 
The article is organized as follows: in Section 2, we recall the correspondence between additive actions and finite dimensional local algebras; in Section 3 we first prove Theorem \ref{degeneration_construction} to obtain the action $(G^{(1)},Q^{(1)})$. Then we show that the existence of unfixed singularities will lead to $V^{(1)} \cdot W \subseteq  \langle  \mu_{1},b_{0} \rangle $ and we normalize the algebraic structure of $(R,W,F)$ when the type is $B_{0}$ or $C_{0}$; in Section~4, we first normalize the structure of $R$, then we show the uniqueness of the normalized structure, which gives proof of Theorem \ref{determine_by_degeneration} and also gives explicit result of our classification result shown in Theorem~\ref{classfication}.

\section{Additive actions and finite dimensional local algebras}
 As mentioned before an additive action $(\mathbb{G}_{a}^{n},X,\mathbb{P}^{m})$ is induced by a faithful rational linear representation $\rho: \mathbb{G}_{a}^{n} \rightarrow GL_{m+1}(\mathbb{K})$. Furtherly if $X$ is non-degenerate in $\mathbb{P}^{m}$ then $\rho$ becomes a cyclic representation i.e., $\langle \rho(g) \cdot v :g \in \mathbb{G}_{a}^{n}\rangle=\mathbb{K}^{m+1}$ for some nonzero $v \in \mathbb{K}^{m+1}$. Hassett and Tschinkel in \cite{HT} gave a complete characterization of such representations.
\begin{theorem}[\cite{HT},Theorem 2.14]\label{H-T}
There is 1-1 correspondence between the following two classes: \\
(1) equivalence classes of faithful rational cyclic representation $\rho: \mathbb{G}_{a}^{n} \mapsto GL_{m+1}(\mathbb{K})$;
(2) isomorphism classes of $(R,W)$, where $R$ is a local (m+1)-dimensional algebra with maximal ideal $\mathfrak{m}$ and $W$ is an n-dimensional subspace of $\mathfrak{m}$ that generates $R$ as an alegbra with unit.
\end{theorem}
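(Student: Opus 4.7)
The plan is to set up two explicit constructions in opposite directions and verify that they are mutually inverse modulo the stated equivalences.

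\textbf{Direction $(2)\Rightarrow(1)$.} Given $(R,W)$, pick a basis $w_{1},\dots,w_{n}$ of $W$. Since $R$ is finite-dimensional local, its maximal ideal $\mathfrak{m}$ is nilpotent, so $\exp(a_{1}w_{1}+\cdots+a_{n}w_{n})$ is a polynomial in the $a_{i}$ taking values in $R^{*}$. I would define $\rho(a_{1},\dots,a_{n})\colon R\to R$ to be multiplication by this exponential. This is a rational linear representation of $\mathbb{G}_{a}^{n}$ on $R\cong\mathbb{K}^{m+1}$; it is faithful because the order-one term in $(a_{1},\dots,a_{n})$ recovers $\sum a_{i}w_{i}$ with the $w_{i}$ linearly independent, and cyclicity is immediate with cyclic vector $1_{R}$, whose orbit spans the subalgebra generated by $W$, namely all of $R$.

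\textbf{Direction $(1)\Rightarrow(2)$.} Given a faithful cyclic $\rho$, let $\mathfrak{g}=d\rho(\mathrm{Lie}\,\mathbb{G}_{a}^{n})\subseteq\mathfrak{gl}(V)$, an $n$-dimensional abelian Lie subalgebra consisting of nilpotent operators, and let $A\subseteq\mathrm{End}(V)$ be the unital associative subalgebra it generates. Then $A$ is commutative, and every $\rho(g)=\exp d\rho(g)$ lies in $A$. Pick a cyclic vector $v\in V$; the evaluation map $\phi\colon A\to V$, $T\mapsto Tv$, is surjective because $V=\langle\rho(g)v\rangle_{g}\subseteq Av$. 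Setting $R:=A/\ker\phi$ transports the algebra structure of $A$ to $V$ through $\phi$, producing a commutative $\mathbb{K}$-algebra of dimension $m+1$. Because $R$ is generated by the identity together with the images of nilpotent operators, it is local with nilpotent maximal ideal. Define $W$ to be the image of $\mathfrak{g}$ in $R$; if $T\in\mathfrak{g}$ satisfies $Tv=0$ then commutativity gives $T(\rho(g)v)=\rho(g)(Tv)=0$ for all $g$, and since the $\rho(g)v$ span $V$ this forces $T=0$, so $\mathfrak{g}\hookrightarrow R$ and $\dim W=n$.

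\textbf{Inverse property and equivalence bookkeeping.} Starting from $(R,W)$ and returning via the evaluation construction at $v=1_{R}$, the operator algebra $A$ generated by the multiplications $m_{w_{i}}$ is canonically identified with $R$ itself, recovering the original pair. Conversely, from $\rho$ one produces $(R,W)$, and the multiplicative representation associated to $(R,W)$ is conjugate to $\rho$ via $\phi$. An intertwining $GL(V)$-equivalence of representations translates into a $\mathbb{K}$-algebra isomorphism $R\xrightarrow{\sim}R'$ carrying $W$ to $W'$, using that the equivalence induces an isomorphism of the generated operator algebras.

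\textbf{Main obstacle.} The most delicate step is independence of the cyclic vector: two choices $v,v'$ should produce isomorphic pairs $(R,W)$. I would handle this by showing that any two cyclic vectors of a commutative cyclic $A$-module differ by an invertible element $u\in A^{*}$ (since $v'=Sv$ for some $S\in A$, and $S$ must be invertible because $v'$ is itself cyclic), so the two resulting algebra structures on $V$ differ by the inner automorphism $x\mapsto u\cdot x$, which is the desired isomorphism fixing $W$ setwise. The remaining subtlety is locality of $R$, which reduces to the standard fact that a finite-dimensional commutative algebra generated by the unit and nilpotent elements has a nilpotent maximal ideal, ruling out a nontrivial idempotent decomposition. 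Collecting these observations yields the bijection.
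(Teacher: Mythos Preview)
The paper does not prove this theorem at all: it is quoted verbatim from Hassett--Tschinkel \cite{HT} and immediately followed by Remark~2.2, so there is no ``paper's own proof'' to compare against. Your argument is essentially the original Hassett--Tschinkel construction and is sound in substance.

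Two small comments. First, your kernel in the $(1)\Rightarrow(2)$ direction is automatically zero: if $T\in A$ satisfies $Tv=0$, then $T(\rho(g)v)=\rho(g)Tv=0$ for all $g$ by commutativity, and cyclicity forces $T=0$. So $\phi\colon A\to V$ is already a linear isomorphism and $R=A$; you invoke exactly this argument a few lines later for $\mathfrak{g}$, so the quotient notation $A/\ker\phi$ is harmless but redundant. Second, the phrase ``inner automorphism $x\mapsto u\cdot x$'' is a misnomer: in a commutative algebra inner automorphisms are trivial. What you actually show is that the linear map on $V$ given by the operator $S\in A^{*}$ intertwines the two \emph{different} algebra structures on $V$ coming from the two cyclic vectors (it is $\phi_{v'}\circ\phi_{v}^{-1}$), and carries $W=\mathfrak{g}\cdot v$ to $W'=\mathfrak{g}\cdot v'$. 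The conclusion is correct; only the label is off.
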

\begin{remark}\label{action_algebrai}
	Under this correspondence a representation of $\mathbb{G}_{a}^{n}$ on $\mathbb{K}^{m+1}$ can always be viewed as an action on a local algebra $R \cong \mathbb{K}^{m+1}$. Moreover if we choose a $\mathbb{K}$-basis of $W$: $W=\langle w_{1},...,w_{n} \rangle$ then we can write down the action explicitly:
	\begin{align*}
	\mathbb{G}_{a}^{n} \times R \mapsto & \,R \\
	(g_{1},g_{2},...,g_{n}) \times r \mapsto & \, r \cdot exp(g_{1}w_{1}+...+g_{n}w_{n}).
	\end{align*}
And the induced action of the Lie algebra $\mathfrak{g}(G_{a}^{n})=G_{a}^{n}$ on $R$ is:
\begin{align*}
\mathfrak{g} \times R \mapsto & \,R \\
(g_{1},g_{2},...,g_{n}) \times r \mapsto & \, r \cdot (g_{1}w_{1}+...+g_{n}w_{n}),
\end{align*}
 we identify $\mathfrak{g} \cong W$ as vector spaces. 
\end{remark}
 Moreover Hassett and Tschinkel proved in \cite{HT} and later Arzhantsev and Popovskiy proved in \cite{AP} the following 1-1 correspondences.
\begin{theorem}[\cite{HT}, Proposition 2.15]\label{correspondence _proj} There's a bijection between the following two classes:\\
(1) equivalence classes of additive actions on $\mathbb{P}^{n}$;\\
(2) equivalence classes of (n+1)-dimensional local commutative algebras.
\end{theorem}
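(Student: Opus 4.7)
The plan is to deduce Theorem \ref{correspondence _proj} from Theorem \ref{H-T} together with the elementary observation that an $n$-dimensional subspace of an $n$-dimensional maximal ideal must equal the whole maximal ideal. First I would note that an additive action on $\mathbb{P}^{n}$ is induced by a faithful rational representation $\rho \colon \mathbb{G}_{a}^{n} \to GL_{n+1}(\mathbb{K})$, and that the open orbit, being dense in $\mathbb{P}^{n}$, is automatically non-degenerate. So, as in the setup of Theorem \ref{H-T}, the map $\rho$ is a faithful rational cyclic representation.

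Next I would apply Theorem \ref{H-T} with $m=n$: such representations are in bijection with isomorphism classes of pairs $(R,W)$, where $R$ is an $(n+1)$-dimensional local algebra with maximal ideal $\mathfrak{m}$, and $W \subseteq \mathfrak{m}$ is an $n$-dimensional subspace generating $R$ as an algebra with unit. Since $R = \mathbb{K}\cdot 1_{R} \oplus \mathfrak{m}$, we have $\dim \mathfrak{m}= n$, hence $W = \mathfrak{m}$, and the generation condition is then automatic because $\mathfrak{m}$ together with $1_{R}$ spans $R$. Moreover, any algebra isomorphism $R \to R'$ sends $\mathfrak{m}$ to $\mathfrak{m}'$, so isomorphism of pairs $(R, \mathfrak{m})$ coincides with isomorphism of local algebras. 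Under the correspondence of Theorem \ref{H-T}, cyclic representations with $m=n$ therefore match isomorphism classes of $(n+1)$-dimensional local commutative algebras.

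The remaining point, which I regard as the main obstacle, is that Theorem \ref{H-T} parametrizes equivalence classes of representations (up to conjugation in $GL_{n+1}$), while equivalence of additive actions is defined up to $PGL_{n+1}$ and is insensitive to the choice of cyclic vector in the open orbit. So I would check that the passage from $GL$-equivalence classes of representations to equivalence classes of additive actions on $\mathbb{P}^{n}$ is still a bijection. In one direction this is clear; in the other, two cyclic representations inducing equivalent projective actions differ by a scalar on $V$ together with a change of cyclic vector inside the open orbit. Because $\mathbb{G}_{a}^{n}$ has no nontrivial rational characters, the scalar can be absorbed into a reparametrization of $\mathbb{G}_{a}^{n}$, and the change of cyclic vector translates, via the identification $v \leftrightarrow 1_{R}$ from Remark \ref{action_algebrai}, into an algebra automorphism of $R$. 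Hence the resulting map from equivalence classes of additive actions on $\mathbb{P}^{n}$ to isomorphism classes of $(n+1)$-dimensional local algebras is well-defined and bijective, which is the claim.
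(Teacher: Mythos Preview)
The paper does not give a proof of this statement at all: it is simply quoted as \cite[Proposition 2.15]{HT}, and the only commentary is the sentence following it, namely that under the correspondence the subspace $W$ is the maximal ideal. Your argument is therefore not comparable to a proof in the paper, because there is none; but your derivation from Theorem~\ref{H-T} is correct and is exactly the natural specialisation $m=n$, which forces $W=\mathfrak{m}$ --- precisely the content of the paper's one-line remark.

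Your discussion of the passage from $GL$-equivalence of representations to equivalence of additive actions is a genuine point that the paper does not address (nor does it need to, since it is citing \cite{HT}). One small caution: the paper's Definition~1.1 phrases equivalence of additive actions only in terms of automorphisms of the ambient $\mathbb{P}^{m}$, without explicitly allowing reparametrisations of $\mathbb{G}_{a}^{n}$. In practice the Hassett--Tschinkel correspondence is stated up to group automorphisms as well, and your absorption of the scalar and change of cyclic vector into an algebra automorphism of $R$ is the right mechanism; just be aware that if one reads Definition~1.1 literally, the matching of equivalence relations requires this extra word, which you have supplied.
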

Under the correspondence the action is given as in Remark 2.2, where the subspace $W$ is the maximal ideal of the local algebra. 
\begin{theorem}[\cite{AP}, Proposition 3]\label{correspondence_hypersurface}  There's a bijection between the following two classes:\\
(1) equivalence classes of additive actions on hypersurfaces $H$ in $ \mathbb{P}^{n+1}$ of degree at least two;\\
(2) equivalence classes of $(R,W)$, where $R$ is a local (n+2)-dimensional algebra with maximal ideal $\mathfrak{m}$ and $W$ is a hyperplane of $\mathfrak{m}$ that generates the algebra $R$ with unit.
\end{theorem}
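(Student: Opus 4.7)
The plan is to deduce the result from Theorem \ref{H-T} by a dimension count together with the construction of the hypersurface on the algebra side as the closure of the orbit through the identity.

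For the direction $(1)\Rightarrow(2)$, I would start with an additive action of $\mathbb{G}_{a}^{n}$ on a hypersurface $H\subset\mathbb{P}^{n+1}$ of degree at least two. Because $H$ has degree $\geqslant 2$ it is non-degenerate in $\mathbb{P}^{n+1}$, and hence the open $\mathbb{G}_{a}^{n}$-orbit $O\subset H$ is also non-degenerate. The underlying linear representation $\rho:\mathbb{G}_{a}^{n}\to GL_{n+2}(\mathbb{K})$ is therefore faithful, rational, and cyclic, so Theorem \ref{H-T} produces a local $(n+2)$-dimensional algebra $R$ with maximal ideal $\mathfrak{m}$ and an $n$-dimensional subspace $W\subset\mathfrak{m}$ generating $R$ as a unital algebra. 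Since $R=\mathbb{K}\cdot 1_{R}\oplus\mathfrak{m}$ one has $\dim\mathfrak{m}=n+1$, so $W$ is automatically a hyperplane of $\mathfrak{m}$, giving (2).

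For the direction $(2)\Rightarrow(1)$, given $(R,W)$ with the stated properties, Theorem \ref{H-T} yields a faithful cyclic additive action of $\mathbb{G}_{a}^{n}$ on $\mathbb{P}(R)=\mathbb{P}^{n+1}$, given on $R$ by the formula in Remark \ref{action_algebrai}. The orbit $O$ of $[1_{R}]$ is the image of the map $W\to\mathbb{P}(R)$, $w\mapsto[\exp(w)]$; it is $n$-dimensional because $\rho$ is faithful, and non-degenerate in $\mathbb{P}(R)$ because $\{\exp(w):w\in W\}$ spans $R$ (as $W$ generates $R$ as a unital algebra). Hence $H:=\overline{O}$ is an irreducible $n$-dimensional subvariety of $\mathbb{P}^{n+1}$, i.e.\ a hypersurface; being non-degenerate it is not a hyperplane, so $\deg H\geqslant 2$. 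By construction $H$ is $\mathbb{G}_{a}^{n}$-invariant and contains $O$ as an open orbit, supplying an additive action on $H$. Finally, matching equivalences on the two sides reduces to the already-known matching in Theorem \ref{H-T}: a linear automorphism of $\mathbb{P}^{n+1}$ preserving $H$ and intertwining the $\mathbb{G}_{a}^{n}$-actions corresponds to an isomorphism of pairs $(R,W)$, and conversely; returning from $(R,W)$ recovers the original $H$ because the original $H$ is an irreducible $n$-dimensional $\mathbb{G}_{a}^{n}$-invariant subvariety containing $O$, so $H=\overline{O}$.

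The main obstacle I anticipate is this last equivalence-matching step: one must verify that the extra datum of an invariant hypersurface $H$ imposes no constraint on equivalences beyond what is already captured by $(R,W)$ in Theorem \ref{H-T}. This is clean because the subspace $W\subset\mathfrak{m}$ is recovered intrinsically from the representation (as the image of the Lie algebra action on $R$ at the cyclic vector), and the hypersurface $H$ is recovered intrinsically as the closure of the open orbit; hence any intertwining automorphism on the hypersurface side automatically induces an isomorphism of pairs on the algebra side, and vice versa.
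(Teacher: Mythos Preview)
The paper does not supply its own proof of this statement; it is quoted verbatim as \cite[Proposition~3]{AP} and used as a black box. Your argument is correct and is precisely the natural deduction from Theorem~\ref{H-T}: the codimension-one condition on $W\subset\mathfrak{m}$ is forced by the dimension count $\dim R=n+2$, and the hypersurface is recovered as $\overline{\mathbb{G}_a^n\cdot[1_R]}$, which is non-degenerate (hence of degree $\geqslant 2$) because the representation is cyclic. This is essentially the argument in \cite{AP} as well, so there is no meaningful divergence to discuss.
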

Then in \cite{AP} they furtherly introduced the notion of $invariant$ $multilinear$ $form$ for a pair $(R,W)$.
\begin{definition}[\cite{AP}, Definition 3]
	Let R be a local algebra with maximal ideal $\mathfrak{m}$. An invariant d-linear form on $R$ is a d-linear symmetric map
	\begin{equation*}
	F:R \times R \times ... \times R \mapsto \mathbb{K}
	\end{equation*}
	such that $F(1,1,...,1)=0$, the restriction of $F$ to $\mathfrak{m} \times ... \times \mathfrak{m}$ is nonzero, and there exist a hyperplane $W$ in $\mathfrak{m}$ which generates the algebra $R$ and such that:
	\begin{equation*}
	F(ab_{1},b_{2},...,b_{d})+F(b_{1},ab_{2},...,b_{d})+...+F(b_{1},b_{2},...,ab_{d})=0 \,\,\, \forall a \in W, b_{1},...,b_{d} \in R.
	\end{equation*}\par
	We say $F$ is irreducible if it can not be represented as product of two lower dimensional forms.
\end{definition}
Now given an additive action on a hypersurface $H=\{ f(x_{0},..,x_{n+1})=0\} \subseteq \mathbb{P}^{n+1}$, then under the correspondence in Theorem \ref{correspondence_hypersurface} the polarization $F$ of $f$ is an invariant multilinear form on $(R,W)$, which induces the following more explicit correspondence.
\begin{theorem}[\cite{AP}, Theorem 2]\label{correpondence} There is a bijection between the following two classes:\\
(1) equivalence classes of additive actions on hypersurface $H \subseteq \mathbb{P}^{n+1}$ of degree at least two;\\
(2) equivalence classes of (R,F), where R is a local algebra of dimension $n+2$ and F is an irreducible invariant d-linear form on $R$ up to a scalar.
\end{theorem}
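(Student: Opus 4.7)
The plan is to derive this bijection by enriching the correspondence of Theorem~\ref{correspondence_hypersurface} with the polarization of the hypersurface's defining equation.

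Forward direction: given an additive action on $H = \{f = 0\} \subset \mathbb{P}^{n+1}$ with associated pair $(R,W)$, let $F$ be the polarization of $f$, a symmetric $d$-linear form on $R$. Since $\mathbb{G}_a^n$ has no nontrivial characters, $\mathbb{G}_a^n$-invariance of the zero locus $H$ upgrades to $\mathbb{G}_a^n$-invariance of $f$ itself. Differentiating along the Lie algebra action of Remark~\ref{action_algebrai}, under which an element $w \in W$ acts on $R$ by right multiplication by $w$, gives the identity
\[
\sum_{i=1}^{d} F(b_1, \ldots, w b_i, \ldots, b_d) = 0
\]
for all $w \in W$ and $b_1, \ldots, b_d \in R$, i.e. $F$ is invariant with respect to $W$. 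Irreducibility of $F$ follows from irreducibility of $f$ (taking $H$ reduced), and $F$ is determined by $H$ up to a nonzero scalar.

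Backward direction: given $(R,F)$ with $F$ irreducible and invariant, the very definition supplies a generating hyperplane $W \subset \mathfrak{m}$ witnessing the invariance. Theorem~\ref{correspondence_hypersurface} applied to $(R,W)$ yields an additive action of $\mathbb{G}_a^n$ on $\mathbb{P}(R)$; the hypersurface $H = \{F(r, \ldots, r) = 0\}$ is irreducible (because $F$ is) and $\mathbb{G}_a^n$-invariant, the latter following by integrating the invariance identity along $W$.

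The main obstacle is well-definedness of the backward map: a priori $F$ may admit several generating hyperplanes $W$ witnessing its invariance, and these must all yield equivalent actions. To handle this I would introduce the canonical subspace
\[
\widetilde W \;=\; \Bigl\{ w \in \mathfrak{m} : \sum_{i=1}^{d} F(b_1, \ldots, w b_i, \ldots, b_d) = 0 \text{ for all } b_j \in R \Bigr\},
\]
which depends only on $F$, and identify it with the Lie algebra of the unipotent subgroup of $1 + \mathfrak{m}$ stabilizing the polynomial $f(r) = F(r, \ldots, r)$. Irreducibility of $F$ then forces this stabilizer to have dimension exactly $n$, so any admissible hyperplane $W$ can be carried onto $\widetilde W$ by an automorphism of $R$ preserving $F$ up to scalar. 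Uniqueness of $(R,W)$ up to the equivalence of Theorem~\ref{correspondence_hypersurface} then combines with the forward direction to give the stated bijection, the equivalence relations on the two sides matching by unwinding the definitions.
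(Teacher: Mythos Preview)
The paper does not prove this statement; it is quoted from \cite{AP} without proof, so there is no in-paper argument to compare against. That said, the paper immediately follows the statement with Lemma~\ref{F_det_by_R_W}, which is the real engine behind the bijection, and your proposal does not exploit it.

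Your outline is largely sound, but the step you flag as the ``main obstacle'' is handled incorrectly. You claim that irreducibility of $F$ forces the stabiliser (equivalently, your subspace $\widetilde W$) to have dimension exactly $n$. Irreducibility plays no role here. The correct argument is elementary: since $W\subseteq\widetilde W\subseteq\mathfrak m$ and $W$ is a hyperplane in $\mathfrak m$, one only has to rule out $\widetilde W=\mathfrak m$. If that held, then for any $b_0\in\mathfrak m\setminus W$ the invariance identity with $b_1=\cdots=b_d=1$ and $w=b_0$ would give $d\cdot F(b_0,1,\ldots,1)=0$; but Lemma~\ref{F_det_by_R_W} computes $F(b_0,1,\ldots,1)=(-1)^{d-1}(d-1)!\,\pi(b_0)\neq 0$. (Equivalently: if $\widetilde W=\mathfrak m$ then $f$ is invariant under $1+\mathfrak m$, hence $f$ vanishes on the Zariski-dense set of units in $R$, forcing $f\equiv 0$.) Thus $\widetilde W=W$ on the nose, so $W$ is \emph{uniquely} determined by $(R,F)$---no automorphism is needed to reconcile different choices. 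In fact Lemma~\ref{F_det_by_R_W} gives the explicit recovery $W=\{b\in\mathfrak m:\,F(1,\ldots,1,b)=0\}$, which is how \cite{AP} dispatches the issue. Once you replace your irreducibility appeal by this observation, the rest of your plan goes through.
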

Under the correspondence $\mathbb{P}^{n+1}=\mathbb{P}(R)$, $H=\mathbb{P}(\{r \in R: F(r,r,...,r)=0\})$, and the action on $\mathbb{P}^{n+1}$ corresponds to the action on $R$ as shown in Remark \ref{action_algebrai}, with the open orbit $O=\mathbb{P}(G_{a}^{n} \cdot 1_{R})$. Moreover $F$ is determined by $(R,W)$ as follows.
\begin{lemma}[\cite{AP}, Lemma 1]\label{F_det_by_R_W}
	Fix a $\mathbb{K}$-linear automorphism $\mathfrak{m}/W \cong \mathbb{K} $ with the projection $\pi: \mathfrak{m} \rightarrow  \mathfrak{m}/W \cong \mathbb{K}$ then the corresponding invariant linear form is (up to a scalar):
	\begin{equation*}	
	F_{W}(b_{1},...,b_{d})=(-1)^{k}k!(d-k-1)!\pi(b_{1}...b_{d}),
	\end{equation*}
	where k is the number of units among $b_{1},...,b_{d}$ and for $k=d$ let $F_{W}(1,1,...,1)=0$.
\end{lemma}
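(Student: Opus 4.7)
The plan is to prove two things: (1) the formula $F_W$ defines a valid invariant $d$-linear form on $(R,W)$, and (2) any such form is a scalar multiple of $F_W$.

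For existence, I would define $F_W$ on tuples $(b_1,\ldots,b_d)$ with each $b_i\in\{1\}\cup\mathfrak{m}$ and extend multilinearly using the decomposition $R=\mathbb{K}\cdot 1\oplus\mathfrak{m}$. Symmetry is transparent because both the unit count $k$ and the product $b_1\cdots b_d$ are symmetric in the arguments. To verify the invariance identity $\sum_{i=1}^{d} F_W(b_1,\ldots,ab_i,\ldots,b_d)=0$ for $a\in W$, I would again reduce by multilinearity to such tuples and let $k$ denote the number of $b_i$ equal to $1$. Since $a\in\mathfrak{m}$, substituting $ab_i$ for $b_i$ lowers the unit count by one when $b_i=1$ and leaves it unchanged when $b_i\in\mathfrak{m}$; by commutativity of $R$ every term shares the common product $a\cdot b_1\cdots b_d$, so collecting contributions yields
\[
\bigl[k(-1)^{k-1}(k-1)!(d-k)! + (d-k)(-1)^{k} k!(d-k-1)!\bigr]\pi(a\cdot b_1\cdots b_d),
\]
whose bracket telescopes to zero. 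The boundary cases are checked separately: for $k=d$ the sum collapses to $d\cdot F_W(a,1,\ldots,1)=(-1)^{d-1}d!\,\pi(a)=0$ since $a\in W=\ker\pi$, and for $k=0$ one invokes $\mathfrak{m}^{d+1}\subseteq W$ (which is automatic when $(R,W)$ arises from a degree-$d$ hypersurface) to conclude $\pi(a\cdot b_1\cdots b_d)=0$.

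For uniqueness, let $F'$ be an invariant $d$-linear form with $F'(1,\ldots,1)=0$ and set $f'(r)=F'(r,\ldots,r)$, a homogeneous polynomial of degree $d$ on $R$. Since $R$ is commutative, on the chart $1+\mathfrak{m}$ the $\mathbb{G}_a^n$-orbit of $\exp(v)$ is $\{\exp(v+w):w\in W\}$, so $f'(\exp(v))$ depends only on $\pi(v)=\pi(\log\exp(v))$. Hence there exists a polynomial $g$ with $g(0)=0$ such that $f'(r)=g(\pi(\log r))$ on $1+\mathfrak{m}$. Extending by homogeneity to $R^{\times}=\{r:\sigma(r)\neq 0\}$, where $\sigma:R\to\mathbb{K}$ projects onto the coefficient of $1$, gives
\[
f'(r)=\sigma(r)^{d}\,g\bigl(\pi(\log(r/\sigma(r)))\bigr).
\]
Expanding $\pi(\log(r/\sigma(r)))=\sum_{k\geq 1}\frac{(-1)^{k-1}}{k}\pi(m^k)\sigma(r)^{-k}$ with $m=r-\sigma(r)\cdot 1$, and demanding that the displayed expression extend to a polynomial of degree $d$ on all of $R$, forces $g(t)=ct$: if $\deg g\geq 2$, a monomial with $\sum k_j>d$ and nonzero coefficient $\prod\pi(m^{k_j})$ would appear with negative $\sigma$-exponent and could not be cleared. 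Thus $f'=c\cdot f_W$ where $f_W(r):=\sigma(r)^{d}\pi(\log(r/\sigma(r)))$, and a direct computation shows that the polarization of $f_W$ agrees with $F_W$ up to a universal scalar.

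The main obstacle is the step that forces $g$ to be linear, which hinges simultaneously on the polynomiality of $f'$ on the non-unit locus $\{\sigma=0\}$ and on the precise shape of the logarithmic expansion. A clean packaging is to define $f_W$ directly, verify that the condition $\mathfrak{m}^{d+1}\subseteq W$ is exactly what makes $f_W$ a well-defined homogeneous polynomial of degree $d$, confirm by polarization that it corresponds to $F_W$, and then deduce the uniqueness by observing that two invariant homogeneous degree-$d$ polynomials on $R$ that agree up to scalar on one generic unit orbit must coincide globally by polynomial continuation.
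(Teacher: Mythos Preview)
The paper does not prove this lemma at all: it is quoted verbatim as \cite[Lemma~1]{AP} and used as a black box, so there is no in-paper argument to compare against. Your existence argument is essentially complete and is the standard one; the telescoping identity and the two boundary cases ($k=0$ via $\mathfrak{m}^{d+1}\subseteq W$, $k=d$ via $\pi(a)=0$) are exactly what is needed.

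Your uniqueness argument carries the right idea but, as you yourself flag, the step ``polynomiality forces $g$ linear'' is only sketched. The point that deserves care is ruling out cancellations among the Laurent terms: when $\deg g\ge 2$, you should isolate a single monomial in $\sigma(r)$ of strictly negative exponent whose coefficient, as a polynomial function of $m\in\mathfrak{m}$, is not identically zero. Taking the unique term built from the top piece $\frac{(-1)^{d-1}}{d}\,\pi(m^{d})\,\sigma(r)^{-d}$ raised to the $\deg g$ power does this, because $\pi(m^{d})$ is not the zero function (indeed $\mathfrak{m}^{d}\not\subseteq W$), and no other product of the $\pi(m^{k})$'s can contribute to the same power $\sigma(r)^{d-d\cdot\deg g}$. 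Making that explicit closes the gap. Alternatively, the cleaner route you mention at the end---show directly that any two invariant degree-$d$ polynomials agreeing on one open orbit in $\mathbb{P}(R)$ agree everywhere by Zariski density, after normalizing their values at $[1_R+b_0]$---avoids the Laurent bookkeeping entirely and is closer in spirit to how \cite{AP} argues.
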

In the following we focus on additive actions on hyperquadrics, i.e., $d=2$ and we use a triple $(R,W,F)$ to represent an additive action on a hyperquadric $Q$ where $F$ is the bilinear form given in Theorem \ref{correpondence}. By  Lemma \ref{F_det_by_R_W} we have the following.
\begin{lemma}\label{y0}
Fix $b_{0} \in \mathfrak{m} \backslash W$ and the projection $y_{0}: R \rightarrow \mathbb{K}$ s.t. $y_{0}(1_{R})=y_{0}(W)=0$ and $y_{0}(b_{0})=1$. Then  for $ a,a' \in \mathfrak{m}$ and for $r \in W$ we have:
\begin{align*}
&F(a,a') =y_{0}(aa').\\
&F(1,1)=F(1,r)=0,\,F(1,b_{0})=-1.
\end{align*}
\end{lemma}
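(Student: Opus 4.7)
The plan is to derive all three identities as direct specializations of Lemma \ref{F_det_by_R_W} (which gives the form $F_W$ explicitly in terms of a chosen projection $\pi: \mathfrak{m} \to \mathfrak{m}/W \cong \mathbb{K}$), once we identify a natural projection arising from $b_0$ and $y_0$.

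First I would observe that since $b_0 \in \mathfrak{m} \setminus W$, the restriction $y_0|_{\mathfrak{m}}$ descends to an isomorphism $\mathfrak{m}/W \cong \mathbb{K}$ (it vanishes on $W$ and sends the class of $b_0$ to $1$). Let $\pi := y_0|_{\mathfrak{m}}$ and consider the corresponding form $F_W$ from Lemma \ref{F_det_by_R_W}. By Theorem \ref{correpondence} the invariant bilinear form $F$ associated with the action on $Q$ is determined only up to a nonzero scalar, and rescaling $F$ does not change the hyperquadric it cuts out, so we may (and do) normalize the representative so that $F = F_W$ with this choice of $\pi$.

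Next I would specialize the formula $F_W(b_1,b_2) = (-1)^{k} k!(d-k-1)! \, \pi(b_1 b_2)$ with $d = 2$ into the three cases:
\begin{align*}
k=0 \ (a,a' \in \mathfrak{m}): & \quad F(a,a') = \pi(a a') = y_0(aa'), \\
k=1 \ (b \in \mathfrak{m}): & \quad F(1,b) = -\pi(b) = -y_0(b), \\
k=2: & \quad F(1,1) = 0 \text{ (by the convention in Lemma \ref{F_det_by_R_W})}.
\end{align*}
Feeding the middle line $b = r \in W$ yields $F(1,r) = -y_0(r) = 0$ because $y_0|_W \equiv 0$; feeding $b = b_0$ yields $F(1,b_0) = -y_0(b_0) = -1$, which confirms that the normalization $F = F_W$ is exactly the one under which the stated identities hold simultaneously.

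There is essentially no obstacle here: the only point worth checking is the compatibility of the scalar normalization of $F$ with the choice of $y_0$, and this compatibility is built into the formula of Lemma \ref{F_det_by_R_W} itself. The lemma is thus recorded as a convenient reformulation of \cite[Lemma 1]{AP} adapted to the bilinear ($d=2$) case and to the concrete coordinate data $(b_0, y_0)$ that will be used throughout the paper.
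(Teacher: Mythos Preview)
Your proposal is correct and is precisely the approach taken in the paper: the lemma is stated immediately after Lemma~\ref{F_det_by_R_W} with the remark ``By Lemma~\ref{F_det_by_R_W} we have the following,'' and your argument is exactly the specialization of that formula to $d=2$ with $\pi = y_0|_{\mathfrak{m}}$. The only additional content you supply is making the scalar normalization of $F$ explicit, which the paper leaves implicit.
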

As $F$ is the polarization of the homogenous polynomial defining $Q$ we have $Sing(Q)=\mathbb{P}(Ker(F))$. Moreover we have the following.
\begin{lemma}\label{ker_kerr}
	$Ker(F) \subseteq W$ and $Ker(F|_{W})=Ker(F)$.
\end{lemma}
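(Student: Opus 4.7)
The plan is to work with the explicit formulas for $F$. From Lemma \ref{y0}, $F(m,m') = y_0(mm')$ for $m, m' \in \mathfrak{m}$, and bilinearity together with $F(1,1)=0$, $F(1,w)=0$ for $w \in W$, and $F(1,b_0)=-1$ extends this to $F(1,r) = -y_0(r)$ for every $r \in R$. Both inclusions then follow by direct computation, with invariance of $F$ as the one extra ingredient.

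For $\mathrm{Ker}(F) \subseteq W$, I would take $v \in \mathrm{Ker}(F)$, decompose $v = c + m$ with $c \in \mathbb{K}$ and $m \in \mathfrak{m}$, and note that $F(v,1) = -y_0(m) = 0$ forces $m \in W$, so $v = c + w$ with $w \in W$. To rule out $c \neq 0$, the key is that $\mathrm{Ker}(F)$ is an ideal of $R$: invariance gives $F(av,s) = -F(v,as) = 0$ for every $a \in W$ and $s \in R$, hence $W \cdot \mathrm{Ker}(F) \subseteq \mathrm{Ker}(F)$; iterating and using that $W$ generates $R$ as an algebra with unit yields $R \cdot \mathrm{Ker}(F) \subseteq \mathrm{Ker}(F)$. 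If $c \neq 0$, then $v$ is a unit in the local ring $R$, so $\mathrm{Ker}(F) = R$, contradicting the nonvanishing of $F|_{\mathfrak{m} \times \mathfrak{m}}$ built into the definition of an invariant bilinear form.

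For $\mathrm{Ker}(F|_W) = \mathrm{Ker}(F)$, the inclusion $\supseteq$ is immediate from the first part. For $\subseteq$, take $u \in W$ with $F(u,W) = 0$; by bilinearity and the decomposition $R = \mathbb{K} \oplus W \oplus \mathbb{K} b_0$, it suffices to check $F(u,1) = 0$ (which is $-y_0(u) = 0$) and $F(u, b_0) = 0$ for a single $b_0 \in \mathfrak{m} \setminus W$. The trick is to pick $b_0 \in W^2$, which is possible because $W$ generating $R$ with unit forces $\mathfrak{m} = W + W^2$ together with $\mathfrak{m} \neq W$, hence $W^2 \not\subseteq W$. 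Writing $b_0 = \sum_i w_i w'_i$ with $w_i, w'_i \in W$, invariance with $a = w_i$, $b_1 = u$, $b_2 = w'_i$ gives $F(w_i u, w'_i) + F(u, w_i w'_i) = 0$, and Lemma \ref{y0} together with commutativity of $R$ shows both terms equal $y_0(u w_i w'_i)$, so $2 y_0(u w_i w'_i) = 0$ and the scalar vanishes. Summing over $i$ gives $F(u,b_0) = y_0(u b_0) = 0$, so $u \in \mathrm{Ker}(F)$.

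The main obstacle will be this last step: $F$ is controlled by $y_0 \circ (\text{mult})$ only on $\mathfrak{m} \times \mathfrak{m}$, and without choosing $b_0$ inside $W^2$ the hypothesis $F(u,W) = 0$ does not obviously transmit to $F(u,b_0)$. The point is that once $b_0$ is expressed as a sum of products of elements of $W$, invariance folds $F(u, w_i w'_i)$ against itself via commutativity of $R$ and forces the vanishing essentially for free.
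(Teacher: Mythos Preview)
Your proof is correct but takes a genuinely different route from the paper's. The paper invokes the degree-two condition via \cite[Theorem 5.1]{Arzhantsev2009HassettTschinkelCM}, which gives $\mathfrak{m}^{2}\not\subseteq W$ and $\mathfrak{m}^{3}\subseteq W$; it then picks $b_{0}\in\mathfrak{m}^{2}\setminus W$ and uses $b_{0}\cdot l_{W}\in\mathfrak{m}^{3}\subseteq W$ (so $y_{0}(b_{0}l_{W})=0$) to dispatch both inclusions in two lines. You avoid $\mathfrak{m}^{3}\subseteq W$ entirely: for $\mathrm{Ker}(F)\subseteq W$ you show $\mathrm{Ker}(F)$ is an ideal via invariance and rule out units, and for $\mathrm{Ker}(F|_{W})\subseteq\mathrm{Ker}(F)$ you express $b_{0}$ as a sum of products in $W$ and use invariance plus commutativity to force $2y_{0}(uw_{i}w'_{i})=0$. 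Your argument is slightly more self-contained (it does not cite the external modality result) and would extend to settings where $\mathfrak{m}^{3}\subseteq W$ is not already known; the paper's is shorter and dovetails with the standing choice $b_{0}\in\mathfrak{m}^{2}$ used throughout. One small imprecision: the assertion $\mathfrak{m}=W+W^{2}$ should read $\mathfrak{m}=\sum_{k\geq 1}W^{k}$, but the conclusion $W^{2}\not\subseteq W$ follows all the same (if $W^{2}\subseteq W$ then $W^{k}\subseteq W$ for all $k$, forcing $\mathfrak{m}=W$).
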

\begin{proof}
	By \cite[Theorem 5.1]{Arzhantsev2009HassettTschinkelCM}, the degree of the hypersurface is the maximal exponent $d$ such that $\mathfrak{m}^{d} \nsubseteq W$, for $d=2$ we have $\mathfrak{m}^{2} \nsubseteq W $ and $\mathfrak{m}^{3} \subseteq W$. Hence we can take a $b_{0} \in \mathfrak{m}^{2} \backslash W$ and the projection $y_{0}$ defined in Lemma \ref{y0}. \par
	For any $l \in Ker(F)$, write $l=a+tb_{0}+l_{W}$ for some $a,t \in \mathbb{K}$ and $l_{W} \in W$, then $t=-F(1,l)=0$ by Lemma \ref{y0}. And
	\begin{equation*}
    0=F(b_{0},l)=-a+F(b_{0},l_{W})=-a+y_{0}(b_{0}l_{W})=-a
    \end{equation*}
    as $b_{0}l_{W} \in \mathfrak{m}^{3} \subseteq W$, concluding that $l=l_{W} \in W$.\par
    For any $l \in Ker(F|_{W})$, then $F(1,l)=0$ as $l \in W$ and $F(l,b_{0})=y_{0}(lb_{0})=0$ as $b_{0}l \in \mathfrak{m}^{3} \subseteq W$ and $y_{0}(W)=0$, concluding that $l \in Ker(F)$.
\end{proof}
\begin{lemma}\label{basic_multi}
 For any  $b_{0} \in \mathfrak{m}^2 \backslash W$, $\mathfrak{m}^{2} \subseteq Ker(F)\oplus \langle b_{0} \rangle$.
\end{lemma}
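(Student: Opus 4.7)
The plan is to exploit the decomposition $\mathfrak{m} = W \oplus \langle b_0 \rangle$ (which holds because $W$ is a hyperplane in $\mathfrak{m}$ and $b_0 \notin W$) together with the fact that $\mathfrak{m}^3 \subseteq W$, established already in the proof of Lemma \ref{ker_kerr}. Given any $x \in \mathfrak{m}^2$, I write $x = \lambda b_0 + w$ with $\lambda \in \mathbb{K}$ and $w \in W$. Since $b_0 \in \mathfrak{m}^2$ by hypothesis, the element $w = x - \lambda b_0$ lies in $\mathfrak{m}^2 \cap W$.

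The goal is to show that this $w$ sits inside $\mathrm{Ker}(F)$. By Lemma \ref{ker_kerr} we have $\mathrm{Ker}(F) = \mathrm{Ker}(F|_W)$, so it suffices to verify $F(w,w') = 0$ for every $w' \in W$. Using Lemma \ref{y0}, $F(w,w') = y_0(w w')$; now $w \in \mathfrak{m}^2$ and $w' \in W \subseteq \mathfrak{m}$ together give $w w' \in \mathfrak{m}^3 \subseteq W$, and $y_0$ vanishes on $W$, so $F(w,w') = 0$ as required. Thus $w \in \mathrm{Ker}(F)$ and $x = \lambda b_0 + w \in \langle b_0 \rangle + \mathrm{Ker}(F)$.

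Finally, to upgrade the sum to a direct sum, observe that $\mathrm{Ker}(F) \subseteq W$ (Lemma \ref{ker_kerr}) while $b_0 \notin W$, so $\mathrm{Ker}(F) \cap \langle b_0 \rangle = 0$. Hence $\mathfrak{m}^2 \subseteq \mathrm{Ker}(F) \oplus \langle b_0 \rangle$.

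There is no real obstacle here: the statement is essentially a bookkeeping corollary of the two preceding lemmas, and the only thing one has to be alert to is that the witness $b_0$ chosen to split off the $\mathfrak{m}/W$ direction is itself required to lie in $\mathfrak{m}^2$, which is precisely the hypothesis of the lemma and is exactly what makes $w = x - \lambda b_0$ stay in $\mathfrak{m}^2$.
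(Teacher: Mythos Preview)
Your proof is correct and follows essentially the same route as the paper: decompose an element of $\mathfrak{m}^2$ along $W \oplus \langle b_0 \rangle$, use $b_0 \in \mathfrak{m}^2$ to keep the $W$-component inside $\mathfrak{m}^2$, and then use $\mathfrak{m}^3 \subseteq W$ together with Lemma~\ref{y0} to kill $F$ on that component. The only cosmetic difference is that the paper verifies $F((aa')_W,r)=0$ for all $r\in\mathfrak{m}$ and $F(1,(aa')_W)=0$ directly, whereas you invoke Lemma~\ref{ker_kerr} to reduce to testing against $W$; both are equally valid.
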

\begin{proof}
	Firstly choose a $b_{0}  \in \mathfrak{m}^{2} \backslash W$. Given any $a,a' \in \mathfrak{m}$ we have
	\begin{equation*}
	aa'= y_{0}(aa')\cdot b_{0}+(aa')_{W},
	\end{equation*}
	  where $y_{0}$ is the projection defined in Lemma \ref{y0}. Now for any $r \in \mathfrak{m}$ then
	  \begin{equation*}
	  r \cdot (aa')_{W} =r \cdot (aa'- y_{0}(aa')\cdot b_{0}) \in \mathfrak{m}^3 \subseteq W,
	  \end{equation*}
	  as $b_{0} \in \mathfrak{m}^{2}$. Hence by Lemma \ref{y0}
	   \begin{equation*}
	    F((aa')_{W},r)=y_{0}((aa')_{W}\cdot r)=0,
	   \end{equation*}
	   as  $r  \cdot (aa')_{W} \in W$. Note that $F(1,(aa')_{W})=0$ since $F(1,W)=0$. It follows that $(aa')_{W} \in Ker(F)$, concluding the proof.\\
\end{proof}
	From above lemmas we can thus choose a $b_{0} \in \mathfrak{m}^{2} \backslash W$ such that $F(1,b_{0})=-1$ and $\mathfrak{m}^{2} \subseteq Ker(F)\oplus \langle b_{0} \rangle$. Moreover if we fix a basis of $Ker(F)=\langle \mu_{1},...,\mu_{l} \rangle$ then we can represent the multiplications of elements in $\mathfrak{m}$ as follows.
	\begin{equation}\label{multi}
	aa'=F(a,a') b_{0} + V_{1}(a,a') \mu_{1} +V_{2}(a,a') \mu_{2}+...+V_{l}(a,a') \mu_{l}.
	\end{equation}
\section{Unfixed singularities and vanishing of bilinear forms}
In this section, we first prove Theorem \ref{degeneration_construction}. Then we show that in the case of corank two the existence of unfixed singularities leads to $V^{(1)} \cdot W \subseteq \langle b_{0},\mu_{1} \rangle$. Finally we show that if $K(Q) \nsubseteq K(Q^{(1)})$ or $K(Q)=K(Q^{(1)})$ then we can already normalize the algebraic structure of $(R,W,F)$.\par
\subsection{Operation for actions with unfixed singularities}
 We first give an algebraic characterization of related concepts. Given an additive action on hyperquadric $Q$ represented by $(R,W,F)$, recall that $Sing(Q)=\mathbb{P}(Ker(F))$, $G^{(1)}=\cap_{x \in K(X)} G_{x }$ , $V^{(1)}=\{r' \in W \, | \, r' \cdot Ker(F)=0 \}$ and $V_{(1)}=Ker(F_{|_{V^{(1)}}})$. We furtherly  define $S'=\{r \in R \,|\, r \cdot W=0 \}$. Then we have the following.
\begin{proposition}\label{identify_2}
	(i) $\,Fix(Q)=\mathbb{P}(S')$.\par
	(ii) $\,G^{(1)}=exp(\mathfrak{g}^{(1)})$, where $\mathfrak{g}^{(1)} \subseteq \mathfrak{g}(\mathbb{G}_{a}^{n})$ is a Lie subalgebra and $\mathfrak{g}^{(1)} \cong V^{(1)}$ under the identification $\mathfrak{g}(\mathbb{G}_{a}^{n}) \cong W $ given in Remark \ref{action_algebrai}.\par
	(iii) $Ker(F) \cdot \mathfrak{m} \not=0 $ if and only if  $ V^{(1)} \not = W $ if and only if $ Sing(Q) \nsubseteq Fix(Q)$.
\end{proposition}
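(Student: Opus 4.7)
The plan is to translate each of the three claims to the algebra side via the dictionary of Theorem~\ref{correspondence_hypersurface} and Remark~\ref{action_algebrai}, and handle them in sequence. I will use repeatedly that $\mathbb{G}_{a}^{n}$ is connected unipotent, so every algebraic character $\mathbb{G}_{a}^{n}\to\mathbb{G}_{m}$ is trivial, and hence the $\mathbb{G}_{a}^{n}$-action on any invariant line is pointwise trivial.

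For (i), the character-triviality above says $[r]\in Fix(\mathbb{P}(R))$ iff $\exp(\sum g_{i}w_{i})\cdot r=r$ for all $(g_{i})$. Differentiating at the origin and using nilpotency of the $w_{i}$, this is equivalent to $w\cdot r=0$ for all $w\in W$, i.e., $r\in S'$, so $Fix(\mathbb{P}(R))=\mathbb{P}(S')$. It then remains to check $\mathbb{P}(S')\subseteq Q$. Any nonzero $r\in S'$ lies in $\mathfrak{m}$, since otherwise $r$ would be a unit and $rW=0$ would force $W=0$. Writing $r=w_{r}+c_{r}b_{0}$ with $w_{r}\in W$, the relation $w_{r}w=-c_{r}b_{0}w$ combined with $b_{0}\cdot W\subseteq\mathfrak{m}^{3}\subseteq W$ (from the proof of Lemma~\ref{ker_kerr}) gives $y_{0}(w_{r}w)=0$ for every $w\in W$, so $w_{r}\in Ker(F)$ by Lemma~\ref{ker_kerr}. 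Then (\ref{multi}) yields $w_{r}^{2}\in Ker(F)\subseteq W$, hence $F(r,r)=y_{0}(w_{r}^{2})=0$.

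For (ii), the same character argument shows that the stabilizer of $[r]\in Sing(Q)$ equals $\{g\in\mathbb{G}_{a}^{n}:g\cdot r=r\}$, whose Lie algebra, under $\mathfrak{g}(\mathbb{G}_{a}^{n})\cong W$, is $\{w\in W:w\cdot r=0\}$. Intersecting over $r$ in a basis of $Ker(F)$ gives the linear subspace $V^{(1)}\subseteq W$, which is automatically a Lie subalgebra since $W$ is abelian. By the exp--log correspondence for unipotent groups, $G^{(1)}=\exp(V^{(1)})$ and $\mathfrak{g}^{(1)}\cong V^{(1)}$.

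For (iii), the equivalence $V^{(1)}\neq W\iff Sing(Q)\nsubseteq Fix(Q)$ is immediate from (i): $V^{(1)}\neq W$ means $W\cdot Ker(F)\neq 0$, which is exactly the failure of $Ker(F)\subseteq S'$, i.e., of $\mathbb{P}(Ker(F))\subseteq\mathbb{P}(S')$. For $V^{(1)}\neq W\iff Ker(F)\cdot\mathfrak{m}\neq 0$, one direction is free as $W\subseteq\mathfrak{m}$. The reverse implication $Ker(F)\cdot W=0\Rightarrow Ker(F)\cdot\mathfrak{m}=0$ is the main technical step; I expect it to rest on the decomposition $\mathfrak{m}^{2}=W\cdot W+\mathfrak{m}^{3}$, valid because $W$ generates $R$ as an algebra with unit and $\mathfrak{m}^{3}\subseteq W$. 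This lets one write $b_{0}=\sum w_{i}w'_{i}+w_{0}$ with $w_{i},w'_{i},w_{0}\in W$, so any $\mu\in Ker(F)$ killing $W$ satisfies $\mu\cdot b_{0}=\sum(\mu w_{i})w'_{i}+\mu w_{0}=0$, and thus $\mu\cdot\mathfrak{m}=0$ since $\mathfrak{m}=W\oplus\mathbb{K}b_{0}$. This decomposition step is where I expect the main difficulty to lie.
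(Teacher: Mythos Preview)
Your proposal is correct and follows essentially the same route as the paper. A few remarks on the comparison.

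In (i), you are actually more careful than the paper in two places. First, you justify via character--triviality why a $\mathbb{G}_{a}^{n}$-fixed point $[r]$ in $\mathbb{P}(R)$ satisfies $g\cdot r=r$ on the nose; the paper simply asserts this. Second, you verify $\mathbb{P}(S')\subseteq Q$, which the paper's proof omits entirely (it computes $Fix(\mathbb{P}(R))=\mathbb{P}(S')$ and writes $Fix(Q)$ on the left without checking the containment). One small slip: your last line reads $F(r,r)=y_{0}(w_{r}^{2})$, but literally $F(r,r)=y_{0}(r^{2})$; the missing cross terms $2c_{r}w_{r}b_{0}+c_{r}^{2}b_{0}^{2}$ lie in $\mathfrak{m}^{3}\subseteq W$ and so have $y_{0}=0$, so the conclusion is unaffected. (An even shorter route: $r\in S'$ and $w_{r}\in W$ give $r\cdot w_{r}=0$, so $r^{2}=c_{r}\,r\cdot b_{0}\in\mathfrak{m}^{3}\subseteq W$ directly.)

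In (iii), your ``main technical step'' is not where the difficulty lies; the paper dispatches it in one clause. Since $W$ generates $R$ as a unital algebra, $\mathfrak{m}=\sum_{k\geq 1}W^{k}$, and hence $Ker(F)\cdot W=0$ immediately gives $Ker(F)\cdot W^{k}=0$ for all $k$, so $Ker(F)\cdot\mathfrak{m}=0$. Your decomposition $\mathfrak{m}^{2}=W\cdot W+\mathfrak{m}^{3}$ together with $\mathfrak{m}^{3}\subseteq W$ is a valid alternative, just longer. For the second equivalence the paper goes through (ii) ($Sing(Q)\subseteq Fix(Q)\iff G^{(1)}=\mathbb{G}_{a}^{n}\iff V^{(1)}=W$) rather than through (i) as you do; both are equally direct.
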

\begin{proof}
(i)	
By Remark \ref{action_algebrai}, the action of $\mathfrak{g}=\mathfrak{g}(\mathbb{G}_{a}^{n})$ on $R$ is given by multiplying elements of $W$ to $R$. Hence we have:
	\begin{align*}
		S'=\{r \in R: r \cdot W =0\}=\{r \in R: \mathfrak{g} \cdot r =0\}
	\end{align*}
	Also by Remark \ref{action_algebrai}, the action of $\mathbb{G}_{a}^{n}$ on $\mathbb{P}^{n+1}$ is identified with the action on $R$. Hence we have:
	\begin{equation*}
	Fix(Q)=\mathbb{P}(\{r \in R: g \cdot r =r, \, \forall g \in G_{a}^{n}\})=\mathbb{P}(\{r \in R: x \cdot r =0, \, \, \forall x \in \mathfrak{g}\})=\mathbb{P}(S').
	\end{equation*}
(ii) Similarly for the isotropy group $G^{(1)}$ of $Sing(Q)$ we have:
\begin{align*}
G^{(1)}&=\{g \in \mathbb{G}_{a}^{n}: g \cdot x=x, \, \forall x \in Sing(Q)\}=\{g \in \mathbb{G}_{a}^{n}: g \cdot r= r, \, \forall r \in Ker(F)\}\\
&=exp(\{x \in \mathfrak{g}: x \cdot r =0 , \, \forall r \in Ker(F)\}).
\end{align*}
Then by Remark \ref{action_algebrai}, under the identification of $\mathfrak{g} \cong W$, we have $\{x \in \mathfrak{g}: x \cdot r =0 , \, \forall r \in Ker(F)\} \cong \{r' \in W: r'\cdot r =0 , \, \forall r \in Ker(F)\}=V^{(1)}$.\par
(iii) The first equivalence follows from the definition of $V^{(1)}$ and the fact that $\mathfrak{m}$ can be generated by $W$. For the second equivalence, we have
$Sing(Q) \subseteq Fix(Q)$ if and only if $\mathbb{G}_{a}^{n}=G^{(1)}$ if and only if $\mathfrak{g}=\mathfrak{g}^{(1)}$ if and only if $V^{(1)}=W$,
  where the last equivalence follows from (ii). \par   
\end{proof}
Next we introduce a lemma to describe multiplications between elements in $\mathfrak{m}$ and elements in $Ker(F)$.
\begin{lemma}\label{basis0}
(i) $Ker(F) \cdot \mathfrak{m} \subseteq Ker(F)$ and there exist a $\mathbb{K}$-basis of $Ker(F)$, $\mu_{1},\mu_{2},...,\mu_{l}$, such that $\mu_{i} \cdot \mathfrak{m} \subseteq \langle \mu_{1},...,\mu_{i-1} \rangle $.  (ii) $V^{(1)} \not =0$. 
\end{lemma}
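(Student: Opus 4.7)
The plan is to prove (i) in two stages and then derive (ii) as an immediate consequence. For the first stage of (i), fix $\mu \in Ker(F)$ and $a \in \mathfrak{m}$; I want to show $F(\mu\cdot a, r) = 0$ for every $r \in R$. Since $R = \langle 1_R\rangle \oplus \mathfrak{m}$, it suffices to check $r = 1_R$ and $r \in \mathfrak{m}$ separately. For $r = 1_R$, apply the invariance identity (Definition 2.5) with the distinguished element $\mu \in W$ (recall $Ker(F) \subseteq W$ by Lemma \ref{ker_kerr}), obtaining $F(\mu a, 1_R) = -F(a, \mu\cdot 1_R) = -F(a,\mu) = 0$. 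For $r \in \mathfrak{m}$, apply Lemma \ref{y0} to rewrite $F(\mu a, r) = y_0(\mu a r)$; but $\mu a r$ lies in $\mathfrak{m}^3 \subseteq W$ (as invoked in the proof of Lemma \ref{ker_kerr}), and $y_0$ vanishes on $W$. Hence $F(\mu a, R) = 0$, which gives $Ker(F)\cdot \mathfrak{m} \subseteq Ker(F)$.

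For the second stage, I would use that multiplication by $\mathfrak{m}$ on $K := Ker(F)$ is well-defined by the previous step and nilpotent, since $\mathfrak{m}$ itself is nilpotent in the finite-dimensional local algebra $R$. Consider the descending filtration
\begin{equation*}
K \;\supseteq\; \mathfrak{m}\cdot K \;\supseteq\; \mathfrak{m}^{2}\cdot K \;\supseteq\; \cdots \;\supseteq\; \mathfrak{m}^{N}\cdot K = 0.
\end{equation*}
Build the basis by first choosing a basis $\mu_1,\ldots,\mu_{l_1}$ of $\mathfrak{m}^{N-1}\cdot K$ (these satisfy $\mu_i \cdot \mathfrak{m} \subseteq \mathfrak{m}^{N}\cdot K = 0$), then extending to a basis of $\mathfrak{m}^{N-2}\cdot K$, and so on up to $\mathfrak{m}^{0}\cdot K = K$. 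At every extension step, any newly added $\mu_i$ (coming from $\mathfrak{m}^{j}\cdot K$) satisfies $\mu_i \cdot \mathfrak{m} \subseteq \mathfrak{m}^{j+1}\cdot K \subseteq \langle \mu_1,\ldots,\mu_{l_1}\rangle \subseteq \langle \mu_1,\ldots,\mu_{i-1}\rangle$, as required.

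Part (ii) then follows at once: the first basis vector $\mu_1$ lies in $K \subseteq W$, and by construction $\mu_1 \cdot \mathfrak{m} \subseteq \langle\rangle = 0$, so in particular $\mu_1 \cdot Ker(F) = 0$, i.e. $\mu_1 \in V^{(1)}$; since $\mu_1 \neq 0$, we get $V^{(1)} \neq 0$. The only delicate point in the whole argument is the first stage, where one must combine the $W$-invariance of $F$ (to handle the unit component) with the degree-two structural fact $\mathfrak{m}^3 \subseteq W$ (to handle the maximal-ideal component); neither ingredient suffices on its own.
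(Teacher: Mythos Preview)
Your proof is correct and takes a genuinely different route from the paper's. For the inclusion $Ker(F)\cdot\mathfrak{m}\subseteq Ker(F)$, the paper argues geometrically: $Sing(Q)=\mathbb{P}(Ker(F))$ is $\mathbb{G}_a^n$-stable because the group acts by automorphisms of $Q$, hence $Ker(F)$ is $W$-invariant (via Remark~\ref{action_algebrai}), and then $\mathfrak{m}$-invariant since $W$ generates $\mathfrak{m}$. You instead compute $F(\mu a, r)$ directly, splitting $r$ into its unit and $\mathfrak{m}$-components and combining the $W$-invariance of $F$ with $\mathfrak{m}^3\subseteq W$. Your approach is purely algebraic and self-contained; the paper's is more conceptual but leans on the action--algebra dictionary. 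For the triangular basis, the paper invokes simultaneous strict upper-triangularization of the commuting nilpotent operators $\phi_c\colon Ker(F)\to Ker(F)$, $r\mapsto c\cdot r$, for $c$ ranging over a basis of $\mathfrak{m}$, while you use the filtration $K\supseteq\mathfrak{m}K\supseteq\mathfrak{m}^2K\supseteq\cdots$; these are two standard ways to the same end.

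Two small points. In your filtration step you write $\mu_i\cdot\mathfrak{m}\subseteq\mathfrak{m}^{j+1}\cdot K\subseteq\langle\mu_1,\ldots,\mu_{l_1}\rangle$, but $l_1=\dim(\mathfrak{m}^{N-1}K)$ is the \emph{smallest} nonzero level; you mean $\langle\mu_1,\ldots,\mu_{\dim(\mathfrak{m}^{j+1}K)}\rangle$, which by construction of the basis has all indices $<i$. Also, your argument for (ii) tacitly assumes $Ker(F)\neq 0$ (so that $\mu_1$ exists); the paper handles the trivial case $Ker(F)=0$ separately, where $V^{(1)}=W\neq 0$ directly from the definition.
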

\begin{proof}
(i)	First note that $Sing(Q)$ is $\mathbb{G}_{a}^{n}$-stable. Then by Theorem \ref{correpondence} and  $\mathbb{P}(Ker(F))=Sing(Q)$, $Ker(F)$ is a $\mathbb{G}_{a}^{n}$-invariant subspace, hence by Remark \ref{action_algebrai} and the fact that $\mathfrak{m}$ is generated by $W$ we conclude that $Ker(F) \cdot \mathfrak{m} \subseteq Ker(F)$ \par  
	Now we choose a $\mathbb{K}$-basis of $\mathfrak{m}$ to be $S_{0}$, then for any $ c \in S_{0}$ we can define a linear map induced by multiplications:
	\begin{align*}
	\phi_{c} : Ker(F) &\mapsto Ker(F)\\
	r &\mapsto c \cdot r
	\end{align*}
	Note that $R$ is a commutative Artinian local ring, hence $\{\phi_{c}: c \in S_{0}\}$ is a set of commutative nilpotent linear maps on $Ker(F)$. Therefore we can choose a basis of $Ker(F)=\langle \mu_{1},...,\mu_{l} \rangle$ s.t. $\phi_{c} (\mu_{i}) \subseteq \langle \mu_{1},...,\mu_{i-1} \rangle $, for any $c \in S_{0}$. As  $S_{0}$ is a basis of $\mathfrak{m}$, (i) is proved.\par 
(ii) If $Ker(F)=0$ then $V^{(1)}=W \not =0$ from the definition	of $V^{(1)}$. If $Ker(F) \not =0$ then by (i) there exist a $\mu_{1} \not =0$ s.t. $\mu_{1} \cdot \mathfrak{m}=0$ and hence $\mu_{1} \in V^{(1)}$, concluding that $V^{(1)} \not =0$. 
\end{proof}
Now we use the correspondences given in Theorem \ref{correspondence _proj} and Theorem \ref{correspondence_hypersurface} to obtain the operation described in Theorem \ref{degeneration_construction}.
\begin{proof}[Proof of Theorem \ref{degeneration_construction}] Firstly note that $Q^{(1)}$ is a non-degenerate variety in $\textbf{L}^{(1)}$, hence it suffices to prove that there exist a linear space $\textbf{L}^{(1)}$ satisfying Theorem \ref{degeneration_construction} (i) and (ii).  In the following we assume $dim(V^{(1)})=m$ for some $m \leqslant n-1$. \par
(a) If $X$ is a hyperquadric, then we represent the action by $(R,W,F)$ with $x_{0} \in O$ s.t. $x_{0}=[1_{R}]$ and define $(V^{(1)},V_{(1)})$ as in Proposition \ref{identify_2}. Also by Lemma \ref{basis0} we have $0 \not =V^{(1)} \subsetneq W$.
	\begin{mycases}
		\case $V^{(1)} \cdot V^{(1)} \subseteq V^{(1)}$, then the induced action is an additive action on a projective space. From Lemma \ref{y0} we conclude that $V^{(1)}=V_{(1)}$.  \par
		In this case $R^{(1)}=V^{(1)} \oplus \langle 1_{R} \rangle$ is a well-defined subring of $R$.  Furthermore it can be easily seen that $R^{(1)}$ is a finite dimensional $\mathbb{K}$-local algebra with maximal ideal $\mathfrak{m}^{(1)}=V^{(1)}$. Then by HT-correspondence (Theorem  \ref{correspondence _proj}), the pair $(R^{(1)},V^{(1)})$ gives an additive action of $\mathbb{G}_{a}^{m}$ on the projective space $\mathbb{P}(R^{(1)})$ with open orbit $\mathbb{G}_{a}^{m} \cdot [1_{R}]$. On the other hand, by Remark \ref{action_algebrai}, the action is given through identifying $\mathfrak{g}(\mathbb{G}_{a}^{m})$ with $V^{(1)}$, hence from Proposition \ref{identify_2}.(ii) we conclude that up to equivalences the corresponding action is exactly induced by the action of $G^{(1)}$ on $R^{(1)}$. Thus the action of $G^{(1)}$ on $\mathbb{P}(R^{(1)})$ is an additive action on the projective space with open orbit $G^{(1)} \cdot [1_{R}]$, and $\mathbb{P}(R^{(1)}) \subsetneq \mathbb{P}(R)$ as $V^{(1)} \subsetneq W$. Above all we have found the subspace $\textbf{L}^{(1)}=\mathbb{P}(R^{(1)})=Q^{(1)}$ of $\mathbb{P}^{n+1}$ satisfying Theorem \ref{degeneration_construction} (i) and (ii):
		\[
		 \begin{tikzcd}
		G^{(1)} \times \mathbb{P}(R^{(1)}) \arrow{r} \arrow[hook]{d} & \mathbb{P}(R^{(1)}) \supseteq G^{(1)} \cdot [1_{R}] \arrow[hook]{d} \\%
		\mathbb{G}_{a}^{n} \times \mathbb{P}^{n+1} \arrow{r}& \mathbb{P}^{n+1}  \supseteq \mathbb{G}_{a}^{n} \cdot [1_{R}]
		\end{tikzcd}
		\]
		\par
		\case $V^{(1)} \cdot V^{(1)} \nsubseteq V^{(1)}$, then the induced action is an additive action on a hyperquadric. From Lemma \ref{y0} we conclude that $V^{(1)}\not =V_{(1)}$.  \par\par
		First we can choose a suitable $b_{0} \in \mathfrak{m}^2 \backslash W$ s.t. $V^{(1)} \cdot V^{(1)} \subseteq V^{(1)} \oplus \langle b_{0} \rangle $ and $b_{0} \cdot Ker(F)=0$. In fact, from $V^{(1)}\not =V_{(1)}$,  there exist $a,a' \in V^{(1)}$ with $F(a,a')=1$. Now we define $b_{0}=a\cdot a'$ then $b_{0} \in \mathfrak{m}^2 \backslash W$ and $b_{0} \cdot Ker(F)=0$ as $a \in V^{(1)}$. Moreover for any $c,c' \in V^{(1)}$:
		\begin{align*}
				 c \cdot c' =y_{0}(cc') \cdot b_{0} +(c\cdot c')_{|_{W}},
		\end{align*}
		  hence from $c \cdot c' \cdot Ker(F) =b_{0} \cdot Ker(F)=0$  we have $(c\cdot c')_{|_{W}} \in V^{(1)}$, concluding that $V^{(1)} \cdot V^{(1)} \subseteq V^{(1)} \oplus \langle b_{0} \rangle$.\par
		Now we set $R^{(1)}=V^{(1)} \oplus \langle b_{0} \rangle \oplus \langle 1_{R} \rangle$, $\mathfrak{m}^{(1)}= V^{(1)} \oplus \langle b_{0} \rangle $.  Then 
		\begin{align*}
		 &b_{0} \in (\mathfrak{m}^{(1)})^{2} \nsubseteq V^{(1)},\\
		 &b_{0} \cdot \mathfrak{m}^{(1)} \subseteq (\mathfrak{m}^{(1)})^3 \subseteq V^{(1)},
		\end{align*}
		 as $(\mathfrak{m}^{(1)})^3 \subseteq \mathfrak{m}^3 \subseteq W$ and $(\mathfrak{m}^{(1)})^3 \cdot Ker(F) =0$, where $\mathfrak{m}^3 \subseteq W$ follows from \cite[Theorem 5.1]{Arzhantsev2009HassettTschinkelCM} and the fact that $(R,W,F)$ reprensents an action on a hyperquadric.\par
		 Now it follows that $R^{(1)}$ is a finite dimensional local $\mathbb{K}$-algebra with maximal ideal $\mathfrak{m}^{(1)}= V^{(1)} \oplus \langle b_{0} \rangle $, $V^{(1)}$ is a hyperplane of $\mathfrak{m}^{(1)}$ generating the algebra $R^{(1)}$ such that $(\mathfrak{m}^{(1)})^{2} \nsubseteq V^{(1)}$ and $(\mathfrak{m}^{(1)})^3 \subseteq V^{(1)}$. Hence by Theorem \ref{correspondence_hypersurface} and \cite[Theorem 5.1]{Arzhantsev2009HassettTschinkelCM}, $(R^{(1)},\mathfrak{m}^{(1)},V^{(1)})$ corresponds to an additive action of $G_{a}^{m}$ on a hyperquadric $Q^{(1)}$ in $\mathbb{P}(R^{(1)})$ with open orbit $\mathbb{G}_{a}^{m} \cdot [1_{R}]$. Then similar to Case 1, by Remark \ref{action_algebrai} and Proposition \ref{identify_2}.(ii) we conclude that the corresponding action (up to equivalences) is exactly induced by the action of $G^{(1)}$ on $R^{(1)}$. Thus the action of $G^{(1)}$ on $\mathbb{P}(R^{(1)})$ induces an additive action on a hyperquadric $Q^{(1)}$ with the open orbit $O^{(1)}=G^{(1)} \cdot [1_{R}]$, and $\mathbb{P}(R^{(1)}) \subsetneq \mathbb{P}(R)$ as $V^{(1)} \subsetneq W$. Moreover in the more explicit correspondence Theorem \ref{correpondence} we can easily see the corresponding bilinear form $F^{(1)}$ is just $F_{|_{R^{(1)}}}$.\par
		Now $\mathbb{P}(R^{(1)})$ is already a subspace satisfying Theorem \ref{degeneration_construction} (i) and (ii):
		\[ \begin{tikzcd}
		G^{(1)} \times \mathbb{P}(R^{(1)}) \arrow{r} \arrow[hook]{d} & \mathbb{P}(R^{(1)}) \supseteq Q^{(1)} \supseteq O^{(1)}= G^{(1)} \cdot [1_{R}]   \arrow[hook]{d} \\%
		\mathbb{G}_{a}^{n} \times \mathbb{P}^{n+1} \arrow{r}& \mathbb{P}^{n+1}  \supseteq Q \supseteq O= \mathbb{G}_{a}^{n} \cdot [1_{R}]
		\end{tikzcd}
		\]
	\end{mycases}
(b) If $X$ is a projective space, following Theorem \ref{correspondence _proj}, we represent the action $(\mathbb{G}_{a}^{n},\mathbb{P}^{n})$ by  a pair $(R,\mathfrak{m})$, where $x_{0}=[1_{R}]$. We first show that $K(X)=\mathbb{P}(\mathfrak{m})$. In fact, for any $[\alpha]$ in the open orbit we have $\alpha$ is invertible by Remark 2.2. Conversely, for any invertible element $r \in R$ we have $dim(\mathbb{G}_{a}^{n} \cdot [r])=dim(\mathfrak{g} \cdot r)=dim(\mathfrak{m} \cdot r)=dim(\mathfrak{m})=n$, concluding that $[r]$ lies in the open orbit. Now we define $V^{(1)}=\{\alpha \in \mathfrak{m}:\alpha \cdot \mathfrak{m}=0\}$, then $Fix(X)=\mathbb{P}(V^{(1)})$. Since $K(X) \nsubseteq Fix(X)$ by assumption of Theorem 1.6, we have $V^{(1)} \subsetneq \mathfrak{m}$. Moreover as elements in $\mathfrak{m}$ are nilpotent, we conclude that $V^{(1)} \not =0$ by a similar discussion as that in Lemma 3.2.\par
Now we consider $R^{(1)}=V^{(1)} \oplus \langle 1_{R} \rangle$ then similar to Case 1 of (a), $\mathbb{P}(R^{(1)})$ is $G^{(1)}$-stable and the induced action is an additive action on a projective space with open orbit $G^{(1)} \cdot [1_{R}]$, and $\mathbb{P}(R^{(1)}) \subsetneq \mathbb{P}(R)$ as $V^{(1)} \subsetneq \mathfrak{m}$. Thus $\mathbb{P}(R^{(1)})$ is already a subspace satisfying Theorem \ref{degeneration_construction} (i) and (ii). 
\end{proof}
 Combining the above proof with Proposition \ref{identify_2}, we have the following.
\begin{proposition}\label{output_condi}
Given an additive action on a hyperquadric $Q$ with unfixed singularities, we represent the operation obtained in Theorem \ref{degeneration_construction} by $(R,W,F) \mapsto (R^{(1)},V^{(1)},F^{(1)})$, then:\par
(i)	$Q^{(1)}$ is a projective space  if and only if $V^{(1)} \cdot V^{(1)} \subseteq V^{(1)}$ if and only if $V^{(1)}=V_{(1)}$.\par
(ii) $Sing(Q) \nsubseteq K(Q^{(1)})$ if and only if $ Ker(F) \nsubseteq V_{(1)}$, $Sing(Q)=K(Q^{(1)})$ if and only if $ Ker(F)=V_{(1)}$.\par
(iii) the operation is effective if and only if $Ker(F) \subsetneqq V_{(1)}$.
\end{proposition}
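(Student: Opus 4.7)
My plan is to reduce each of parts (i), (ii), and (iii) to the explicit description of $R^{(1)}$ and $F^{(1)}$ already produced in the proof of Theorem \ref{degeneration_construction}, using the decomposition $\mathfrak{m}^{2} \subseteq Ker(F) \oplus \langle b_{0}\rangle$ from Lemma \ref{basic_multi} and the identity $F(a,a') = y_{0}(a\cdot a')$ from Lemma \ref{y0}. The key intermediate observation I intend to establish is that, for both possible outputs, $K(Q^{(1)}) = \mathbb{P}(V_{(1)})$; once that is in hand, (ii) and (iii) follow immediately by comparison with $Sing(Q) = \mathbb{P}(Ker(F))$.

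For (i), the first equivalence is exactly the dichotomy distinguishing Case 1 and Case 2 in the proof of Theorem \ref{degeneration_construction}, so nothing new is required. For the second equivalence I would pick $a,a' \in V^{(1)}$ and write $a\cdot a' = F(a,a')\,b_{0} + k$ with $k \in Ker(F)$ via Lemma \ref{basic_multi}; by Lemma \ref{y0} the $b_{0}$-coefficient is exactly $F(a,a')$. When this coefficient vanishes, $k = a\cdot a' \in W$, and the identity $(a\cdot a')\cdot Ker(F) = a\cdot (a'\cdot Ker(F)) = 0$ (using $a' \in V^{(1)}$) places $k$ in $V^{(1)}$. Hence $V^{(1)}\cdot V^{(1)} \subseteq V^{(1)}$ is equivalent to $F|_{V^{(1)}\times V^{(1)}} \equiv 0$, i.e.\ $V^{(1)} \subseteq V_{(1)}$, which by definition of $V_{(1)}$ is $V^{(1)} = V_{(1)}$.

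For (ii), I would compute $K(Q^{(1)})$ in each case. In Case 1, $Q^{(1)} = \mathbb{P}(R^{(1)})$ is a projective space, so $K(Q^{(1)}) = \mathbb{P}(\mathfrak{m}^{(1)}) = \mathbb{P}(V^{(1)})$, which equals $\mathbb{P}(V_{(1)})$ by (i). In Case 2, $K(Q^{(1)}) = Sing(Q^{(1)}) = \mathbb{P}(Ker(F^{(1)}))$, and applying Lemma \ref{ker_kerr} to the triple $(R^{(1)}, V^{(1)}, F^{(1)})$ (legitimate since $F^{(1)} = F|_{R^{(1)}}$) yields $Ker(F^{(1)}) = Ker(F^{(1)}|_{V^{(1)}}) = Ker(F|_{V^{(1)}}) = V_{(1)}$. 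So uniformly $K(Q^{(1)}) = \mathbb{P}(V_{(1)})$, and both halves of (ii) are obtained by comparing with $Sing(Q) = \mathbb{P}(Ker(F))$ and taking negation or equality. Part (iii) is then immediate from Definition \ref{effective}, as $K(Q) = Sing(Q) = \mathbb{P}(Ker(F))$ while $K(Q^{(1)}) = \mathbb{P}(V_{(1)})$.

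I do not expect any step to be a genuine obstacle; the only subtlety is verifying in Case 2 that $Ker(F^{(1)}) = V_{(1)}$ truly drops out of Lemma \ref{ker_kerr} applied one level down, which uses that the $b_{0}$ chosen during the construction of $R^{(1)}$ still lies in $(\mathfrak{m}^{(1)})^{2}\setminus V^{(1)}$ and satisfies $F^{(1)}(1, b_{0}) = -1$, facts already verified within the proof of Theorem \ref{degeneration_construction}.
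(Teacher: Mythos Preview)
Your proposal is correct and follows essentially the same approach as the paper's proof. Both arguments reduce (ii) and (iii) to the key identification $K(Q^{(1)}) = \mathbb{P}(V_{(1)})$, handling the projective-space case via $\mathfrak{m}^{(1)} = V^{(1)} = V_{(1)}$ and the hyperquadric case by applying Lemma~\ref{ker_kerr} to the triple $(R^{(1)}, V^{(1)}, F^{(1)})$; for (i) the paper likewise invokes the Case~1/Case~2 dichotomy from the proof of Theorem~\ref{degeneration_construction} together with the computation $a\cdot a' = F(a,a')\,b_{0} + (\text{element of }V^{(1)})$ that you spell out.
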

\begin{proof}
(i)  By part (a) in the proof of Theorem 1.6, it suffices to show $V^{(1)} \cdot V^{(1)} \subseteq V^{(1)}$ if $V^{(1)}=V_{(1)}$. In this case, for any $a,a' \in V^{(1)}$ we have $F(a,a')=0$, hence $aa' \in W$ by Lemma \ref{y0} and $aa' \cdot Ker(F)=0$ by the definition of $V^{(1)}$, concluding that $V^{(1)} \cdot V^{(1)} \subseteq V^{(1)}$.\par
(ii) and (iii). By our definition of effective operation \ref{effective} and $Sing(Q)=\mathbb{P}(Ker(F))$, it suffices to show $K(Q^{(1)})=\mathbb{P}(V_{(1)})$. If $Q^{(1)}$ is a projective space then from part (b) in the proof of Theorem 1.6, for the action on $Q^{(1)}$ represented by $(R^{(1)},\mathfrak{m}^{(1)})$ we have $K(Q^{(1)})=\mathbb{P}(\mathfrak{m}^{(1)})=\mathbb{P}({V}^{(1)})=\mathbb{P}(V_{(1)})$ since in this case $\mathfrak{m}^{(1)}=\mathbb{P}(V^{(1)})=V_{(1)}$ by Case 1 of part (a) in the proof of Theorem 1.6. If $Q^{(1)}$ is a hyperquadric, then $K(Q^{(1)})=Sing(Q^{(1)})=\mathbb{P}(Ker(F^{(1)}))$ and $Ker(F^{(1)})=Ker(F^{(1)}_{|V^{(1)}}))$ by Lemma 2.9. Finally as $F^{(1)}=F_{|R^{(1)}}$, we have $Ker(F^{(1)})=Ker(F_{| V^{(1)}})=V_{(1)}$ by the definition of $V_{(1)}$, concluding the proof.
\end{proof}
 \subsection{Unfixed singularities and vanishing bilinear form}
Our main result of this section is the following.
\begin{proposition}\label{mainprosec2}
    For an additive action on a hyperquadric $Q$ of corank 2 represented by $(R,W,F)$. If $Sing(Q) \nsubseteq Fix(Q)$ and $dim(Q) \geqslant 5$, then for the operation obtained in Theorem \ref{degeneration_construction} we have:\par
    (i) $codim(Q^{(1)},Q)=codim(V^{(1)},W)=1$. \par
    (ii) there exist  $b_{0} \in \mathfrak{m}^{2} \backslash W$ with $F(1,b_{0})=1$ and a $\mathbb{K}$-basis of $Ker(F)$, $\mu_{1},\mu_{2}$, such that:
    \begin{align*}
    b_{0} \cdot \mathfrak{m}=\mu_{1} \cdot \mathfrak{m} =0 \\
    \mu_{2} \cdot \mathfrak{m} \subseteq \langle \mu_{1} \rangle \\
    V^{(1)} \cdot \mathfrak{m} \subseteq \langle \mu_{1},b_{0} \rangle 
    \end{align*}
    (iii) if the operation is not effective, i.e., $Ker(F)=V_{(1)}$ or $Ker(F) \nsubseteq V_{(1)}$, then we can normalize the algebraic structrue of $(R,W,F)$. (see Lemma 3.6 and Lemma 3.9 for details). 
\end{proposition}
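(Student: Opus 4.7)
My plan is to tackle (i), (ii), (iii) in order, with the algebraic bookkeeping for (i) coming directly from Lemma~\ref{basis0}, and the substance of the proof going into constructing the distinguished element $b_{0}$ in (ii).

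For (i) I apply Lemma~\ref{basis0} to fix a basis $\mu_{1},\mu_{2}$ of $\mathrm{Ker}(F)$ satisfying $\mu_{1}\cdot\mathfrak{m}=0$ and $\mu_{2}\cdot\mathfrak{m}\subseteq\langle\mu_{1}\rangle$. The hypothesis $\mathrm{Sing}(Q)\nsubseteq\mathrm{Fix}(Q)$ combined with Proposition~\ref{identify_2}(iii) forces $\mathrm{Ker}(F)\cdot\mathfrak{m}\neq 0$, which together with $\mu_{1}\mathfrak{m}=0$ yields $\mu_{2}\mathfrak{m}=\langle\mu_{1}\rangle$. The linear map $W\to\langle\mu_{1}\rangle$ sending $\alpha\mapsto\alpha\mu_{2}$ is then surjective and has kernel exactly $V^{(1)}$, so $\mathrm{codim}(V^{(1)},W)=1$. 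The companion equality $\mathrm{codim}(Q^{(1)},Q)=1$ is read off from the construction in the proof of Theorem~\ref{degeneration_construction}: in both the projective-space and hyperquadric subcases there, one has $\dim Q^{(1)}=\dim V^{(1)}=\dim W-1$.

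For (ii), the crux is producing $b_{0}\in\mathfrak{m}^{2}\setminus W$ with $F(1,b_{0})=1$ and $b_{0}\cdot\mathfrak{m}=0$. I plan to take $b_{0}=aa'$, where $a,a'\in V^{(1)}$ form an $F$-hyperbolic pair, i.e.\ $F(a,a')=1$ and $F(a,a)=F(a',a')=0$. Existence of such a pair uses $\dim Q\geqslant 5$: a direct dimension count bounds $\dim V_{(1)}\leqslant 3$, so the non-degenerate induced form on $V^{(1)}/V_{(1)}$ has dimension at least $n-4$, and (with a small case analysis on whether $\mu_{2}\in V^{(1)}$) is at least $2$ in the main cases, which is enough for a hyperbolic pair to exist over algebraically closed $\mathbb{K}$. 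Granting such $a,a'$, I verify $b_{0}m=a(a'm)=F(a',m)\,ab_{0}=F(a',m)\,a^{2}a'$ using $a\cdot\mathrm{Ker}(F)=0$; then $a^{2}\in\mathrm{Ker}(F)$ (since $F(a,a)=0$) together with $a'\in V^{(1)}$ gives $a^{2}a'=0$, hence $b_{0}\cdot\mathfrak{m}=0$.

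The third bullet of (ii), $V^{(1)}\cdot\mathfrak{m}\subseteq\langle\mu_{1},b_{0}\rangle$, I derive by associativity. For $\alpha\in V^{(1)}$ and $m,m'\in\mathfrak{m}$, expanding $\alpha(mm')$ via the multiplication formula in~(2.3) and using $\alpha b_{0}=\alpha\mu_{1}=\alpha\mu_{2}=0$ gives $\alpha(mm')=0$; expanding $(\alpha m)m'$ and using $b_{0}m'=\mu_{1}m'=0$ and $\mu_{2}m'\in\langle\mu_{1}\rangle$ reduces the left-hand side to $V_{2}(\alpha,m)\cdot\mu_{2}m'$. Since $\mu_{2}\mathfrak{m}\neq 0$, this forces $V_{2}(\alpha,m)=0$ for all $\alpha\in V^{(1)}$ and $m\in\mathfrak{m}$, giving the claimed inclusion. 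Part (iii) is the statement that $(R,W,F)$ admits an explicit normal form when the operation is not effective, which is precisely the content of Lemmas 3.6 and 3.9, and I simply invoke those. I expect the existence of $b_{0}$ with $b_{0}\cdot\mathfrak{m}=0$ to be the main obstacle: the hyperbolic-pair construction is immediate for $\dim Q\geqslant 6$, but the borderline case $\dim Q=5$ with $\dim V^{(1)}/V_{(1)}=1$ forces $F(a,a)F(a',a')=1$ for every pair in $V^{(1)}$ with $F(a,a')=1$ and so needs a separate construction, likely via $b_{0}=e^{2}$ for a suitable non-isotropic $e\in V^{(1)}$ combined with a refined filtration analysis of $\mathfrak{m}\supseteq\mathfrak{m}^{2}\supseteq\mathfrak{m}^{3}$ to force the cubic obstruction $e^{3}$ to vanish.
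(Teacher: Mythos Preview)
Your treatment of (i) and the associativity argument for $V^{(1)}\cdot\mathfrak{m}\subseteq\langle\mu_{1},b_{0}\rangle$ are essentially the paper's, though cleaner: the paper splits into three cases (Lemmas~3.6, 3.8, 3.9) and multiplies formula~(3.2) by a case-specific element ($e_{t+1}$, $f_{1}$, or $\mu_{2}$), whereas you uniformly pick any $m'$ with $\mu_{2}m'\neq 0$. That unification is a genuine improvement.

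Where you diverge is the construction of $b_{0}$. The paper always sets $b_{0}=e_{1}^{2}$ for a non-isotropic $e_{1}\in V^{(1)}$ and then checks $b_{0}\cdot w=0$ element by element across the three cases. Your hyperbolic-pair device $b_{0}=aa'$ with $a,a'\in V^{(1)}$ isotropic and $F(a,a')=1$ is slicker: $b_{0}m=F(a',m)\,a^{2}a'=0$ falls out in two lines, no case split. This is a nicer argument than the paper's wherever it applies.

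The gap you flag is real and not just bookkeeping. When $n=5$ and $\mathrm{Ker}(F)\subsetneq V_{(1)}$ (so $\dim V_{(1)}=3$ and $F|_{V^{(1)}}$ has rank~$1$), no hyperbolic pair exists and your construction fails. The paper's Lemma~3.8 handles exactly this: with $b_{0}=e_{1}^{2}$ and $F(f_{1},e_{1})=0$, $F(f_{1},g_{1})=1$, the point is that one can also write $b_{0}=f_{1}g_{1}-V_{1}(f_{1},g_{1})\mu_{1}-V_{2}(f_{1},g_{1})\mu_{2}$, and then $b_{0}\cdot e_{1}=g_{1}\cdot(f_{1}e_{1})=0$ because $g_{1}\in V^{(1)}$ kills $\mathrm{Ker}(F)$ and $F(f_{1},e_{1})=0$ kills the $b_{0}$-term. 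Your suggested fix ``$b_{0}=e^{2}$ plus filtration analysis to force $e^{3}=0$'' is pointed in the right direction, but the actual mechanism is not a filtration bound: it is re-expressing $b_{0}$ as a product $f_{1}g_{1}$ that straddles the $V^{(1)}$ boundary, which is available precisely because $g_{1}\in V_{(1)}\setminus\mathrm{Ker}(F)$ exists in this subcase. Without that step the borderline case remains open in your proposal.
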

First applying Lemma \ref{basis0} we have the following:
\begin{lemma}\label{codim^1}
(i) there exist suitable basis of $Ker(F)$, $\mu_{1},\mu_{2}$, s.t. $\mu_{1} \cdot \mathfrak{m}=0$ and $\mu_{2} \cdot \mathfrak{m} \subseteq \langle \mu_{1} \rangle $.
(ii) $codim(V^{(1)},W)=1$.
\end{lemma}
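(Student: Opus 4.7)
The plan is to deduce both parts from Lemma \ref{basis0} together with the algebraic characterization of $V^{(1)}$ given in Proposition \ref{identify_2}.

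For part (i), since $\dim Ker(F) = 2$ (because $Q$ has corank two), Lemma \ref{basis0}(i) directly produces a basis $\mu_1, \mu_2$ of $Ker(F)$ with $\mu_i \cdot \mathfrak{m} \subseteq \langle \mu_1, \ldots, \mu_{i-1} \rangle$. For $i = 1$ this forces $\mu_1 \cdot \mathfrak{m} = 0$, and for $i = 2$ it gives $\mu_2 \cdot \mathfrak{m} \subseteq \langle \mu_1 \rangle$. This is essentially immediate; no real work is needed.

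For part (ii), I would study the $\mathbb{K}$-linear multiplication map
\begin{equation*}
\phi : W \longrightarrow \langle \mu_1 \rangle, \qquad r' \longmapsto r' \cdot \mu_2,
\end{equation*}
which is well-defined by part (i). Since $\mu_1 \cdot \mathfrak{m} = 0$, the condition $r' \cdot Ker(F) = 0$ defining $V^{(1)}$ reduces to $r' \cdot \mu_2 = 0$, so $\ker \phi = V^{(1)}$. Thus $\mathrm{codim}(V^{(1)}, W) = \dim(\mathrm{im}\,\phi) \in \{0,1\}$. To exclude the case $\dim(\mathrm{im}\,\phi) = 0$, I invoke the hypothesis $Sing(Q) \nsubseteq Fix(Q)$ of Proposition \ref{mainprosec2}: by Proposition \ref{identify_2}(iii) this is equivalent to $V^{(1)} \subsetneq W$, so $\phi$ cannot be the zero map. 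Therefore $\mathrm{im}\,\phi = \langle \mu_1 \rangle$ and $\mathrm{codim}(V^{(1)}, W) = 1$.

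The argument is short and the main conceptual point is simply that, in corank two, only one of the two basis vectors of $Ker(F)$ can act nontrivially on $W$, and its image is forced into the one-dimensional space $\langle \mu_1 \rangle$ by the nilpotent-basis structure of Lemma \ref{basis0}. There is no substantial obstacle; the only subtlety is being careful that the choice of basis $\mu_1, \mu_2$ given by Lemma \ref{basis0} is exactly the one used to define $V^{(1)}$, which is harmless because $V^{(1)}$ depends only on $Ker(F)$ as a subspace and not on the chosen basis.
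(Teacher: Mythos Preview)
Your proposal is correct and follows essentially the same approach as the paper: part (i) is Lemma \ref{basis0} specialized to $l=2$, and for part (ii) the paper likewise defines the linear form $\Phi\colon W\to\mathbb{K}$, $\alpha\mapsto\lambda_\alpha$ (where $\alpha\cdot\mu_2=\lambda_\alpha\mu_1$) and identifies $V^{(1)}=\ker\Phi$. You are slightly more explicit than the paper in invoking Proposition \ref{identify_2}(iii) to rule out $\Phi\equiv 0$, which the paper leaves implicit from the ambient hypothesis $Sing(Q)\nsubseteq Fix(Q)$ of the surrounding Proposition \ref{mainprosec2}.
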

\begin{proof}
(i) Applying Lemma \ref{basis0} when $l=2$.\par
(ii) For any $r \in W$ we have $r \cdot \mu_{2} =\lambda_{r} \cdot \mu_{1}$ for some $\lambda_{r} \in \mathbb{K}$, this induces a linear form on $W$:
\begin{align*}
    \Phi: W &\mapsto \mathbb{K} \\
           \alpha &\mapsto \lambda_{\alpha}
\end{align*}
Hence we have $V^{(1)}=Ker(\Phi)$ and $codim(V^{(1)},W)=1$.
\end{proof}
From now on we always choose a basis of $Ker(F)$ satisying Lemma \ref{codim^1}.(i).\par
We prove Proposition \ref{mainprosec2} through a case-by-case argument on analyzing the relation between $Ker(F)$ and $V_{(1)}$. More precisely, we separate it into the following cases.\par
1. $Sing(Q) \subseteq K(Q^{(1)})$, i.e., $Ker(F) \subseteq V_{(1)}$. In this case we have nice inclusions between subspaces:
   $Ker(F) \subseteq V_{(1)} \subseteq V^{(1)} \subseteq W$, for which we furtherly consider two subcases:\par
   (1.a). $Sing(Q)= K(Q^{(1)})$, i.e., $Ker(F)=V_{(1)}$. In this subcase, we can normalize the algebraic structure of $(R,W,F)$.  \par
   (1.b). The operation on $(\mathbb{G}_{a}^{n},Q)$ is effective, i.e., $Ker(F) \subsetneq V_{(1)}$. In this subcase, it remains to determine the multiplication between elements in $V_{(1)}$ and $V^{(1)}$, which leads to our definition of $(V^{(2)},V_{(2)})$ and further discussions in Section 4. 
    \par
2. $Sing(Q) \nsubseteq K(Q^{(1)})$, i.e, $Ker(F) \nsubseteq V_{(1)}$. In this case, we can normalize the algebraic structure of $(R,W,F)$.
\subsubsection{$Ker(F)=V_{(1)}$} 
Recall $V_{(1)}=Ker(F|_{V^{(1)}})$ and $Ker(F|_{W})=Ker(F)$ by Lemma \ref{ker_kerr}, hence we can  have a decomposition of $W$ as follows:
\begin{align}\label{deco1}
W=Ker(F) \oplus \langle e_{1},...,e_{t} \rangle \oplus \langle e_{t+1} \rangle,
\end{align}
where $t \geqslant 2$, $e_{i} \in V^{(1)}$ for $1 \leqslant i \leqslant t$, $e_{t+1} \in W \backslash V^{(1)}$ and $F(e_{i},e_{j})=\delta_{i,j}$.  Then we can furtherly choose a suitable $b_{0}$ and $e_{i},e_{t+1}$ to give a normalization of this case:
\begin{lemma}\label{orlemma}
	If $Ker(F)=V_{(1)}$, then let $b_{0}=e_{1}^2$ we have: \par
	(i) $b_{0} \in \mathfrak{m}^2 \backslash W$ and $b_{0} \cdot W =b_{0} \cdot \mathfrak{m}=0$,  $ V^{(1)} \cdot \mathfrak{m} \subseteq \langle \mu_{1},b_{0} \rangle $.\par
	(ii) one can choose suitable $e_{i},e_{t+1}$ such that 
	\begin{align*}
	e_{t+1} \cdot e_{i}=0 ,	e_{t+1} \cdot \mu_{2}=\mu_{1}, e_{t+1}^{2}=b_{0}+\delta \cdot \mu_{2}	,
	\end{align*}
	where $1 \leqslant i \leqslant t$, $\delta =1$ if $dim(\mathfrak{m}^{2})=3$ and $\delta =0$ if $dim(\mathfrak{m}^{2})=2$.
\end{lemma}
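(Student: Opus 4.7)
The plan is to leverage two structural facts: $e_1\in V^{(1)}$ annihilates $Ker(F)$, and $\mathfrak{m}^2\subseteq Ker(F)\oplus\langle b_0\rangle$ (Lemma \ref{basic_multi}). Together these reduce every product involving $b_0=e_1^2$ to a multiple of $e_1^3$, and the decomposition \eqref{deco1} provides enough freedom, via orthonormal changes among the $e_i$ and shifts by $\mu_2$, to normalize every residual coefficient.

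For part (i), $b_0\in\mathfrak{m}^2\setminus W$ follows from $F(e_1,e_1)=1$ via Lemma \ref{y0}. To prove $b_0\cdot\mathfrak{m}=0$, I decompose $e_1 r=c\,b_0+k$ with $k\in Ker(F)$ for each $r\in\mathfrak{m}$; then $b_0 r=e_1(e_1 r)=c\cdot e_1^3$ (the $k$-contribution vanishes since $e_1\in V^{(1)}$). A direct expansion of $(\sum c_i e_i)^3$ using $e_i e_j=\delta_{ij}b_0+k_{ij}$ with $k_{ij}\in Ker(F)$ shows $(\sum c_i e_i)^3=c_1\cdot e_1^3$, so the hypothesis $t\geqslant 2$ (from $\dim Q\geqslant 5$) allows me to swap $e_1\leftrightarrow e_2$ and reset $b_0=e_2^2$, forcing the new $e_1^3$ to vanish. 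The inclusion $V^{(1)}\cdot\mathfrak{m}\subseteq\langle\mu_1,b_0\rangle$ reduces on $Ker(F)$ to Lemma \ref{codim^1}, and on each $e_i$ ($i\leqslant t$) to ruling out a $\mu_2$-component $d_2$ of $e_i r$. I would extract $d_2$ by computing $e_{t+1}(e_i r)$ in two ways: using $e_{t+1}\mu_1=0$, $e_{t+1}\mu_2=\lambda\mu_1$ with $\lambda\neq 0$ (since $e_{t+1}\notin V^{(1)}$), and the newly proved $e_{t+1}b_0=0$, it equals $d_2\lambda\mu_1$; by commutativity it also equals $e_i(e_{t+1}r)\in e_i(Ker(F)\oplus\langle b_0\rangle)=0$, forcing $d_2=0$.

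For part (ii), combining part (i) with $F(e_{t+1},e_i)=0$ gives $e_{t+1}e_i=\lambda_i\mu_1$. I absorb each $\lambda_i$ by the shift $e_i\mapsto e_i+\beta_i\mu_2$ with $\beta_i=-\lambda_i/\lambda$; this preserves $F$-orthonormality (since $\mu_2\in Ker(F)$), preserves $b_0=e_1^2$, and keeps $e_i\in V^{(1)}$ because the hypothesis $V_{(1)}=Ker(F)$ is equivalent to $\mu_2\in V^{(1)}$, i.e.\ to $\mu_2^2=0$. A rescaling of $\mu_1$ then normalizes $e_{t+1}\mu_2=\mu_1$. Writing $e_{t+1}^2=b_0+a\mu_1+b\mu_2$ (the $b_0$-coefficient being $F(e_{t+1},e_{t+1})=1$), the substitution $e_{t+1}\mapsto e_{t+1}-\tfrac{a}{2}\mu_2$ kills the $\mu_1$-term while leaving earlier relations intact. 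After these normalizations, every product $e_i e_j$ or $e_i e_{t+1}$ with $i,j\leqslant t$ already lies in $\langle\mu_1,b_0\rangle$, so $\mu_2\in\mathfrak{m}^2$ is equivalent to $b\neq 0$; a joint rescaling $\mu_2\mapsto b\mu_2$, $\mu_1\mapsto b\mu_1$ (preserving $e_{t+1}\mu_2=\mu_1$) then normalizes $b$ to $1$, yielding the stated dichotomy $\delta\in\{0,1\}$ according to $\dim\mathfrak{m}^2\in\{2,3\}$.

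The main obstacle I anticipate is keeping the successive normalizations in (ii) mutually compatible: every modification of an $e_i$, of $e_{t+1}$, or a rescaling inside $Ker(F)$ must respect the standing identities $e_i\in V^{(1)}$, $F$-orthonormality, $b_0=e_1^2$, $\mu_1\mathfrak{m}=0$, and $\mu_2\mathfrak{m}\subseteq\langle\mu_1\rangle$ at every stage. The crucial invariant underlying each modification is $\mu_2^2=0$, which is exactly the hypothesis $V_{(1)}=Ker(F)$ and is precisely what makes the cascade of modifications close without residue.
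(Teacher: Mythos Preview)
Your proof is correct and follows essentially the same approach as the paper: both rely on the identity $e_j^2\equiv b_0\pmod{Ker(F)}$ for $j\leqslant t$ combined with $e_i\cdot Ker(F)=0$ to annihilate $b_0\cdot\mathfrak{m}$ (you organize this as the single reduction to $e_1^3=0$ via the swap to $b_0=e_2^2$, whereas the paper computes each $b_0 e_i$ and $b_0 e_{t+1}$ directly), and the normalizations in (ii) via shifts by $\mu_2$ and rescalings inside $Ker(F)$ are identical. One minor correction: the hypothesis $V_{(1)}=Ker(F)$ implies $\mu_2\in V^{(1)}$ but is not equivalent to it (the converse also allows $Ker(F)\subsetneq V_{(1)}$, treated separately in the paper); this does not affect your argument.
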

	\begin{proof}
	(i) As $F(e_{1},e_{1})=1 \not =0$ we have $b_{0}=e_{1}^2 \in \mathfrak{m}^2 \backslash W$ from Lemma \ref{y0}. By formula (\ref{multi}) we can describe the multiplications in $\mathfrak{m}$ as follows:
	\begin{equation}\label{multiply}
	aa'=F(a,a')\cdot b_{0} + V_{1}(a,a')\cdot \mu_{1} +V_{2}(a,a')\cdot \mu_{2}.
	\end{equation}
	Note that from $e_{1} \in V^{(1)}$ we have $b_{0}\cdot Ker(F)=0$, hence to show $b_{0} \cdot W =b_{0} \cdot \mathfrak{m}=0$ it suffices to check $b_{0} \cdot e_{i}=0 $ for $1 \leqslant i \leqslant t+1$.\par
	 For any $1\leqslant  i \leqslant t$, we choose some $j \not = i$. Then from $e_{i},e_{j} \in V^{(1)}$ we have:
	\begin{equation}\label{b0*w=0}
	\begin{split}
	b_{0} \cdot e_{i}=&(e_{j}^2-V_{1}(e_{j},e_{j})\cdot \mu_{1}-V_{2}(e_{j},e_{j}) \cdot \mu_{2}) \cdot e_{i}
	=e_{j}^2 \cdot e_{i}\\
	=&e_{j} \cdot (\delta_{i,j} \cdot b_{0} +V_{1}(e_{i},e_{j}) \cdot \mu_{1} + V_{2}(e_{i},e_{j}) \cdot \mu_{2})=0.
	\end{split}
	\end{equation}
	For $e_{t+1}$ we have :
	\begin{align*}
	b_{0} \cdot e_{t+1}=e_{1}^2 \cdot e_{t+1} 
	=e_{1} \cdot (\delta_{1,t+1} \cdot b_{0} +V_{1}(e_{1},e_{t+1}) \cdot \mu_{1} + V_{2}(e_{1},e_{t+1}) \cdot \mu_{2})=0	.
	\end{align*}
Now for any $a \in V^{(1)}$ and any $a' \in W$, by multiplying $e_{t+1}$ to both sides of equation (\ref{multiply}) we have:
\begin{equation*}
\begin{split}
LHS=&e_{t+1} \cdot a \cdot a'=a \cdot (F(e_{t+1},a') \cdot b_{0} +V_{1}(e_{t+1},a') \cdot \mu_{1}+V_{2}(e_{t+1},a') \cdot \mu_{2})=0.\\
RHS=&e_{t+1} \cdot (  -F(a,a')\cdot b_{0} + V_{1}(a,a')\cdot \mu_{1} +V_{2}(a,a')\cdot \mu_{2})
 = \lambda_{t+1} \cdot V_{2}(a,a') \cdot \mu_{1}
 \end{split}
\end{equation*}
where $e_{t+1} \cdot \mu_{2} = \lambda_{t+1} \cdot \mu_{1}$ with $\lambda_{t+1} \not =0$ by $e_{t+1} \in W \backslash V^{(1)}$. Hence form $LHS=RHS$ we have $V_{2}(a,a')=0$. Thus $V^{(1)} \cdot W \subseteq \langle b_{0}, \mu_{1} \rangle$. Since $W \cdot \langle \mu_{1},b_{0} \rangle=0$ by arguments above, $V^{(1)} \cdot W^{(k)}=0$ for all $k \geqslant 2$. Since $\mathfrak{m}$ is generated by $W$, we conclude that $V^{(1)} \subseteq \langle b_{0},\mu_{1} \rangle$.\par
(ii) Firstly as $F(e_{t+1},e_{i})=0$ for $1 \leqslant i \leqslant t$ and from (i) we have $e_{t+1} \cdot e_{i} \in \langle \mu_{1} \rangle $. Thus if we replace $e_{i}$ by $e_{i}-\lambda_{t+1}^{-1} V_{1}(e_{i},e_{t+1}) \cdot \mu_{2}$ then $e_{t+1} \cdot e_{i}=0$ and we still have $F(e_{i},e_{j})=\delta_{i,j}$. Furtherly by (3.2) we have:
\begin{align*}
e_{t+1}^{2}=b_{0} + V_{1}(e_{t+1},e_{t+1})\cdot \mu_{1} +V_{2}(e_{t+1},e_{t+1})\cdot \mu_{2}.
\end{align*}
then we can replace $e_{t+1}$ by $e_{t+1}-\frac{V_{1}(e_{t+1},e_{t+1})}{2 \lambda_{t+1}} \cdot \mu_{2}$ to make $V_{1}(e_{t+1},e_{t+1})=0$. Note that this will not affect the multiplication of $e_{t+1}$ and $e_{i}$ for $i \leqslant t$. Then by (i) and  Lemma \ref{basic_multi} we conclude that  $V_{2}(e_{t+1},e_{t+1}) \not =0 $ if and only if $ dim(\mathfrak{m}^{2})=3$. Now if $V_{2}(e_{t+1},e_{t+1}) \not =0$, we replace $\mu_{2}$ by $V_{2}(e_{t+1},e_{t+1}) \cdot \mu_{2}$ to make $e_{t+1}^{2}=b_{0}+\delta \cdot \mu_{2}$ and then replace $\mu_{1}$ by $e_{t+1} \cdot \mu_{2}$ to make $e_{t+1} \cdot \mu_{2}=\mu_{1}$.
\end{proof}
\subsubsection{$Ker(F) \subsetneqq V_{(1)}$} In this subcase we start with the following observation.
\begin{observation}\label{ori_obser}
	$codim(Ker(F),V_{(1)})=1$.
\end{observation}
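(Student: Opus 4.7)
The plan is to reduce the observation to the standard fact that a hyperplane in a nondegenerate symmetric bilinear space has at most one-dimensional radical.

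First I would note that the subcase hypothesis $Ker(F) \subsetneqq V_{(1)}$ presupposes $Ker(F) \subseteq V_{(1)}$, and since $V_{(1)} \subseteq V^{(1)}$ by definition, this gives $Ker(F) \subseteq V^{(1)}$ (equivalently, $\mu_{2}^{2} = 0$ for the basis of $Ker(F)$ fixed in Lemma \ref{codim^1}; this is the first thing one should verify, since without it the quotient construction below would fail). By Lemma \ref{ker_kerr}, $Ker(F|_{W}) = Ker(F)$, so $F|_{W}$ descends to a nondegenerate symmetric bilinear form $\bar F$ on $\overline W := W/Ker(F)$. By Lemma \ref{codim^1}(ii), $V^{(1)}$ is a hyperplane of $W$, hence, using $Ker(F) \subseteq V^{(1)}$, the image $\overline{V^{(1)}} := V^{(1)}/Ker(F)$ is a hyperplane of $\overline W$. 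Moreover, since $F$ vanishes identically on $Ker(F) \times R$, the definition $V_{(1)} = Ker(F|_{V^{(1)}})$ descends to $V_{(1)}/Ker(F) = Ker(\bar F|_{\overline{V^{(1)}}})$.

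Next I would invoke the elementary fact that, for any nondegenerate symmetric bilinear form $\bar F$ on $\overline W$ and any hyperplane $\overline{V^{(1)}}$ of $\overline W$, the space $Ker(\bar F|_{\overline{V^{(1)}}})$ has dimension at most one: every element of this kernel lies in the $\bar F$-orthogonal complement of $\overline{V^{(1)}}$ inside $\overline W$, and nondegeneracy of $\bar F$ forces that orthogonal complement to have dimension $\dim\overline{W} - \dim\overline{V^{(1)}} = 1$.

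Combining the two previous steps gives $\dim(V_{(1)}/Ker(F)) \leqslant 1$, while the subcase hypothesis $Ker(F) \subsetneqq V_{(1)}$ forces $\dim(V_{(1)}/Ker(F)) \geqslant 1$. Hence $codim(Ker(F), V_{(1)}) = 1$, as claimed. I do not expect any genuine obstacle: once the nondegeneracy of the induced form $\bar F$ on $W/Ker(F)$ is read off from Lemma \ref{ker_kerr} and the codimension-one statement from Lemma \ref{codim^1}(ii), the rest is a few lines of standard bilinear algebra; the only subtle point is checking that $Ker(F)$ genuinely lies inside $V^{(1)}$ in this subcase, which is where the hypothesis is used.
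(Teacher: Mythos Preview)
Your proof is correct and is essentially the same argument as the paper's, just phrased via passing to the quotient $W/Ker(F)$ and invoking orthogonal complements, whereas the paper writes down directly the injective linear map $V_{(1)}/Ker(F) \to (W/V^{(1)})^{*}$, $\overline{\alpha} \mapsto (\overline{\beta} \mapsto F(\alpha,\beta))$. Both rest on the same two ingredients you identified: $Ker(F|_{W})=Ker(F)$ from Lemma~\ref{ker_kerr} and $codim(V^{(1)},W)=1$ from Lemma~\ref{codim^1}(ii).
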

\begin{proof}
As $Ker(F) \subseteq V_{(1)} \subseteq V^{(1)} \subseteq W$ and $Ker(F|_{W})=Ker(F)$, we have a natural injective linear map:
	\begin{equation*}
		\begin{split}
	V_{(1)} / Ker(F) &\xmapsto{\sigma} (W / V^{(1)})^* \\
	 \overline{\alpha} &\mapsto  \sigma(\overline{\alpha}): \overline{\beta} \rightarrow F(\alpha,\beta)
	 \end{split}
	\end{equation*}
hence $codim(Ker(F),V_{(1)}) \leqslant codim(V^{(1)},W)=1$, concluding the proof.
\end{proof}
Note that by the assumption of $dim(W) \geqslant 5$ we have $codim(V_{(1)},V^{(1)}) \geqslant 1$. And by $Ker(F|_{W})=Ker(F)$ we have a decomposition of $W$ in this subcase:
\begin{equation}
W=\overbrace{\underbrace{\overbrace{\langle \mu_{1},\mu_{2}\rangle}^{Ker(F)} \oplus \langle g_{1}  \rangle}_{V_{(1)}} \oplus \langle e_{1},e_{2},...,e_{t} \rangle}^{V^{(1)}} \oplus \langle f_{1} \rangle \, \, \, \, \,(t \geqslant 1)
\end{equation}
 We now can find a suitable $b_{0}$.
\begin{lemma}
	Let $b_{0}=e_{1}^2$ then $b_{0} \in \mathfrak{m}^{2} \backslash W$ and $b_{0} \cdot W =b_{0} \cdot \mathfrak{m}=0$,  $ V^{(1)} \cdot \mathfrak{m} \subseteq \langle \mu_{1},b_{0} \rangle $.
\end{lemma}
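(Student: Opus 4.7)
The proof will closely parallel that of Lemma~\ref{orlemma}. I would first make the standard normalizations: since $F$ descends to a non-degenerate form on $V^{(1)}/V_{(1)}$, the basis can be chosen so that $F(e_i, e_j) = \delta_{ij}$ for $1 \leq i, j \leq t$, and replacing $f_1$ by $f_1 - F(e_1, f_1) e_1$ arranges $F(e_1, f_1) = 0$. With these choices $y_0(e_1^2) = F(e_1, e_1) = 1$ by Lemma~\ref{y0}, so $b_0 := e_1^2$ lies in $\mathfrak{m}^2 \backslash W$.

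To show $b_0 \cdot W = 0$, observe that for every basis vector $y$ of $W$ other than $e_1$ one has $F(e_1, y) = 0$ (from $\mu_i \in Ker(F)$, $g_1 \in V_{(1)} = Ker(F|_{V^{(1)}})$, the orthonormality of the $e_i$'s, and the modification of $f_1$). Then formula~(\ref{multi}) gives $e_1 y \in Ker(F)$, and since $e_1 \in V^{(1)}$ annihilates $Ker(F)$, we get $b_0 y = e_1(e_1 y) = 0$. The remaining computation $b_0 \cdot e_1 = e_1^3$ splits into two cases: when $t \geq 2$, the substitution trick of Lemma~\ref{orlemma} applies verbatim via the identity $b_0 = e_j^2 - V_1(e_j, e_j) \mu_1 - V_2(e_j, e_j) \mu_2$ for any $j \neq 1$, yielding $b_0 e_1 = e_j(e_j e_1) = 0$.

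The main obstacle is the edge case $t = 1$, where no auxiliary $e_j$ exists. Here I would invoke the associativity identity $(g_1 f_1) e_1 = g_1 (f_1 e_1)$. The right-hand side vanishes since $f_1 e_1 \in Ker(F)$ (by the normalization $F(e_1, f_1) = 0$) and $g_1 \in V^{(1)}$; expanding the left by~(\ref{multi}) and using $\mu_1 \cdot \mathfrak{m} = 0$ together with $e_1 \cdot Ker(F) = 0$ isolates the single term $F(g_1, f_1) \cdot e_1^3$. It remains to show $F(g_1, f_1) \neq 0$: were it zero, then combined with $F(g_1, V^{(1)}) = 0$ (which holds since $g_1 \in V_{(1)}$) one would have $F(g_1, W) = 0$, forcing $g_1 \in Ker(F|_W) = Ker(F)$ by Lemma~\ref{ker_kerr}, contradicting $g_1 \in V_{(1)} \backslash Ker(F)$ in this subcase. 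Hence $e_1^3 = 0$, and $b_0^2 = (b_0 e_1) e_1 = 0$ then gives $b_0 \cdot \mathfrak{m} = 0$. Finally, the inclusion $V^{(1)} \cdot \mathfrak{m} \subseteq \langle \mu_1, b_0 \rangle$ follows by repeating the end of Lemma~\ref{orlemma}: for $a \in V^{(1)}, r \in W$, equating the two expansions of $f_1(ar) = a(f_1 r)$ via~(\ref{multi}) gives $\lambda_{f_1} V_2(a, r) \mu_1 = 0$ with $\lambda_{f_1} \neq 0$ (since $f_1 \notin V^{(1)}$), so $V_2(a, r) = 0$.
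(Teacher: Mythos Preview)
Your proof is correct and follows essentially the same route as the paper. The only cosmetic differences are that the paper normalizes $F(g_1,f_1)=1$ and modifies $e_1$ (by subtracting a multiple of $g_1$) rather than modifying $f_1$, and it treats $b_0\cdot g_1$, $b_0\cdot e_i$, $b_0\cdot f_1$ separately rather than uniformly; the key associativity trick $(g_1 f_1)e_1 = g_1(f_1 e_1)$ in the $t=1$ case is identical.
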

\begin{proof}
    First we check that $b_{0} \cdot W =b_{0} \cdot \mathfrak{m}=0$. For $b_{0} \cdot g_{1}=0$:
	\begin{align*}
	b_{0} \cdot g_{1} =e_{1}^2 \cdot g_{1}
	 =e_{1} \cdot (F(e_{1},g_{1}) \cdot b_{0}+V_{1}(e_{1},g_{1}) \cdot \mu_{1} + V_{2}(e_{1},g_{1}) \cdot \mu_{2})=0,
	\end{align*}
	where the last equation follows from $F(e_{1},g_{1})=0$ and $e_{1} \in V^{(1)}$.\par
    To show $b_{0} \cdot e_{i}=0$ for any $1 \leqslant i \leqslant t$. Firstly note that if $t \geqslant 2$ then we can prove it by using the same computation as (\ref{b0*w=0}).\par
     Now we assume $t=1$. As $g_{1} \in V_{(1)} \backslash Ker(F)$ we can assume $F(g_{1},f_{1})=1$ moreover we can assume $F(f_{1},e_{1})=0$ up to replacing $e_{1}$ by $e_{1}-F(e_{1},f_{1}) \cdot g_{1}.$ Then the calculation of $b_{0} \cdot e_{1}$ follows:
     \begin{align*}
     b_{0} \cdot e_{1}&=(f_{1} \cdot g_{1}-V_{1}(f_{1},g_{1}) \cdot \mu_{1}-V_{2}(f_{1},g_{1}) \cdot \mu_{2}) \cdot e_{1}= g_{1} \cdot f_{1} \cdot  e_{1}\\
     &=g_{1} \cdot (F(f_{1},e_{1}) \cdot b_{0}+V_{1}(f_{1},e_{1}) \cdot \mu_{1}+V_{2}(f_{1},e_{1}) \cdot \mu_{2})=0,
     \end{align*}
     
where the last equation follows from $g_{1} \in V^{(1)}$. Then the calculation of $b_{0} \cdot f_{1}$ follows:
\begin{align*}
b_{0} \cdot f_{1}&=e_{1}^2 \cdot f_{1}
=e_{1} \cdot (F(e_{1},f_{1}) \cdot b_{0}+V_{1}(e_{1},f_{1}) \cdot \mu_{1} + V_{2}(e_{1},f_{1}) \cdot \mu_{2})=0. 
\end{align*}
Finally we conclude that $V^{(1)} \cdot W \subseteq \langle b_{0},\mu_{1} \rangle$ by multiplyng $f_{1}$ to both sides of the formula (3.2).
\end{proof}
\subsubsection{ $Ker(F) \cdot Ker(F) \not =0$} By Lemma \ref{codim^1} we have $\mu_{1} \cdot Ker(F)=0 $ and $\mu_{2} \cdot W \subseteq \langle \mu_{1} \rangle$, hence we can assume $\mu_{2}^2=\mu_{1}$. Now we have a decomposition of $W$:
\begin{equation}
W=\langle \mu_{1},\mu_{2} \rangle \oplus \langle e_{1},...,e_{t} \rangle ,
\end{equation}
with $F(e_{i},e_{j})= \delta_{i,j}$. Moreover for any $e_{i}$ with $e_{i} \cdot \mu_{2}=\lambda_{i} \cdot \mu_{1}$ we can replace $e_{i}$ by $e_{i}-\lambda_{i} \cdot \mu_{2}$ to make $e_{i} \cdot \mu_{2}=0$, which does not affect the value of $F(e_{i},e_{j})$ as $\mu_{2} \in Ker(F)$. Then $V^{(1)}= \langle \mu_{1},e_{1},...,e_{t} \rangle$ and we can find suitable $b_{0}$ as before, which also gives a normalization of this subcase:
\begin{lemma}
Let $b_{0}=e_{1}^2$ then $b_{0} \in \mathfrak{m} \backslash W$ and \par
(i) $b_{0} \cdot W =b_{0} \cdot \mathfrak{m}=0$,  $ V^{(1)} \cdot \mathfrak{m} \subseteq \langle \mu_{1},b_{0} \rangle $.\par
(ii) $\mu_{2}^{2}=\mu_{1}$, $e_{i} \cdot \mu_{2}=e_{i} \cdot \mu_{1}=0$.
\end{lemma}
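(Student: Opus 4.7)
The lemma is structurally parallel to Lemma \ref{orlemma}, and I would follow the same template. The preparations made just before the statement already give $\mu_{2}^{2}=\mu_{1}$ and $e_{i}\cdot\mu_{2}=0$, while $e_{i}\cdot\mu_{1}=0$ comes from the basis choice in Lemma \ref{codim^1}(i); together these settle part (ii). The very same identities show $e_{i}\cdot Ker(F)=0$, so $e_{i}\in V^{(1)}$ for every $i$, and since $F(e_{1},e_{1})=1$ Lemma \ref{y0} gives $b_{0}=e_{1}^{2}\in\mathfrak{m}^{2}\setminus W$, which is what we need to apply formula (\ref{multi}).

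For (i) I would proceed in three steps. \emph{Step 1:} $b_{0}\cdot Ker(F)=0$. Using $e_{1}\in V^{(1)}$, compute $b_{0}\cdot\mu_{j}=e_{1}\cdot(e_{1}\cdot\mu_{j})=0$. \emph{Step 2:} $b_{0}\cdot e_{i}=0$ for each $1\le i\le t$. The assumption $\dim(Q)\ge 5$ forces $t\ge 3$, so I can pick $j\ne i$; rewriting $b_{0}=e_{j}^{2}-V_{1}(e_{j},e_{j})\mu_{1}-V_{2}(e_{j},e_{j})\mu_{2}$ via (\ref{multi}) and using $\mu_{1}\cdot e_{i}=\mu_{2}\cdot e_{i}=0$ reduces $b_{0}\cdot e_{i}$ to $e_{j}(e_{j}e_{i})$. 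Expanding $e_{j}e_{i}$ by (\ref{multi}) yields a combination of $\mu_{1},\mu_{2}$ (because $F(e_{i},e_{j})=0$), each of which is killed by $e_{j}\in V^{(1)}$. Combined with $b_{0}^{2}=(b_{0}e_{1})\cdot e_{1}=0$ this establishes $b_{0}\cdot W=b_{0}\cdot\mathfrak{m}=0$. \emph{Step 3:} $V^{(1)}\cdot\mathfrak{m}\subseteq\langle\mu_{1},b_{0}\rangle$. For $a\in V^{(1)}$ and $a'\in\mathfrak{m}$, expand $aa'=F(a,a')b_{0}+V_{1}(a,a')\mu_{1}+V_{2}(a,a')\mu_{2}$ via (\ref{multi}) and then multiply both sides by $\mu_{2}$: the left side is $a\cdot(\mu_{2}a')\in a\cdot\langle\mu_{1}\rangle=0$, since $\mu_{2}\mathfrak{m}\subseteq\langle\mu_{1}\rangle$ and $a\cdot\mu_{1}=0$ for $a\in V^{(1)}$; the right side collapses to $V_{2}(a,a')\mu_{2}^{2}=V_{2}(a,a')\mu_{1}$. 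Hence $V_{2}(a,a')=0$, so only the $b_{0}$ and $\mu_{1}$ components survive.

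The main obstacle is Step 2: to carry out the substitution $b_{0}=e_{j}^{2}-V_{1}(e_{j},e_{j})\mu_{1}-V_{2}(e_{j},e_{j})\mu_{2}$ I need a second basis vector $e_{j}\neq e_{1}$ lying in $V^{(1)}$, so that $e_{j}$ simultaneously represents $b_{0}$ (up to $Ker(F)$) and annihilates $\mu_{1},\mu_{2}$. The dimension hypothesis $\dim(Q)\ge 5$, which forces $t\ge 3$, is exactly what guarantees this; without it the reduction fails and a separate argument of the $t=1$ type appearing in Lemma \ref{orlemma} would be needed. Everything else in the verification is just bookkeeping using the decomposition of $W$ and the normalization already in place in Section 3.2.3.
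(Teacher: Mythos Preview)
Your proposal is correct and follows essentially the same route as the paper: part (ii) is already arranged by the preparations in Section 3.2.3, the annihilation $b_{0}\cdot W=0$ is obtained by the ``pick another $e_{j}$'' trick borrowed from Lemma \ref{orlemma}, and the crucial vanishing $V_{2}(a,a')=0$ comes from multiplying the expansion (\ref{multi}) by $\mu_{2}$. The paper merely states ``same method as in Lemma \ref{orlemma}'' for Step 2 and carries out the $\mu_{2}$-multiplication for Step 3 with $a'\in W$; you have unpacked the former and run the latter directly for $a'\in\mathfrak{m}$, which is harmless since $\mu_{2}\cdot\mathfrak{m}\subseteq\langle\mu_{1}\rangle$ once $b_{0}\cdot\mu_{2}=0$ is known. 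A minor remark: for Step 2 you only need $t\ge 2$, not $t\ge 3$, though the hypothesis $\dim(Q)\ge 5$ indeed gives $t\ge 3$.
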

\begin{proof}
It suffices to prove (i). First we note that we can use the same method in Lemma \ref{orlemma} to show $b_{0} \cdot W=0$. Then it suffices to prove $V^{(1)} \cdot \mathfrak{m} \subseteq \langle \mu_{1},b_{0} \rangle$. For any $a \in V^{(1)} $, $a' \in W$ equation (\ref{multiply}) still holds and in this case we multiply it by $\mu_{2}$:
	\begin{equation*}
	\begin{split}
	LHS=& \mu_{2} \cdot a \cdot a'=0.\\
	RHS=& \mu_{2} \cdot (F(a,a')\cdot b_{0}+V_{1}(a,a') \cdot \mu_{1}+V_{2}(a,a') \cdot \mu_{2})=V_{2}(a,a') \cdot \mu_{1}.
	\end{split}
	\end{equation*}
	Then from $LHS=RHS$ we have $V_{2}(a,a')=0$, concluding the proof.
\end{proof}

\section{Classification of actions with unfixed singularities}
\subsection{Classification of actions with unfixed singularities (I): $dim(Q) \geqslant 5$ }
In this and next subsections we always consider additive actions on hyperquadrics of corank two with unfixed singularities. Firstly we give the algebraic version of the flow chart, which induces an $algebraic$ $structure$ $sequence$ for a given triple $(R,W,F)$. Then by analyzing the sequence we normalize the structure of $(R,W,F)$. Finally we show the uniqueness of the normalized structure up to equivalences.
\subsubsection{Algebraic version of the flow chart}
Recall in the proof of Theorem \ref{degeneration_construction}, we have represented an operation $(\mathbb{G}_{a}^{n},Q,\mathbb{P}^{m}) \rightarrow (G^{(1)},Q^{(1)},L^{(1)})$ by $(R,W,F) \rightarrow (R^{(1)},V^{(1)},F^{(1)})$ or $(R,W,F) \rightarrow (R^{(1)},V^{(1)})$. In Proposition \ref{output_condi}, we also gave the algebraic criterion for the output condition in the flow chart. Thus the algebraic version of the flow chart naturally arises as the following:
\begin{figure}[H]\label{alg_ver_diag}
	\centering
	\tikzstyle{io} = [trapezium, trapezium left angle=70, trapezium right angle=110, minimum width=2.5cm, minimum height=0.8cm, text centered, draw=black, fill=blue!30]
	\tikzstyle{process} = [rectangle, minimum width=2.5cm, minimum height=0.8cm, text centered, draw=black]
	\tikzstyle{decision} = [rectangle, minimum width=1.0cm, minimum height=0.8cm, text centered, draw=black]
	\tikzstyle{arrow} = [thick,->,>=stealth]
	\begin{tikzpicture}[node distance=1.15cm]   
	\node (pro0) [process] {$V^{(0)}=W,V_{(0)}=Ker(F)$};
	\node (dec1) [decision,below of= pro0,yshift=-0.6cm] {$V^{(k)} \cdot V_{(k)}=0 $}; 
	\node (pro1) [process,right of =dec1,xshift=3.0cm,yshift=0.6] {$output$ $(A,k)$};
	\node (dec2) [decision,below of = dec1,yshift=-0.4cm] {$V_{(k)} \subseteq V_{(k+1)}$};
	\node (pro2)[process,right of = dec2,xshift=3.0cm,yshift=0.0cm] {$output$ $(B,k+1)$};
	\node (dec3) [decision,below of =dec2,yshift=-0.6cm] {$V_{(k)}=V_{(k+1)}$};
	\node (pro4) [process,right of =dec3,xshift=3.0cm,yshift=0.0cm] {$output$ $(C,k+1)$};
	\node (pro5) [process,left of =dec3,xshift=-3.8cm,yshift=0.0cm] {$k=k+1$};
	
	\draw [arrow] (pro0) -- node[anchor=east]{$k=0$}(dec1);
	\draw [arrow] (dec1) -- node [anchor=south] {yes} (pro1);
	\draw [arrow] (dec1) -- node [anchor=east] {no} (dec2) ;
	\draw [arrow] (dec2) -- node [anchor=south]{no} (pro2);
	\draw [arrow] (dec2) -- node [anchor=east] {yes} (dec3);
	\draw [arrow] (dec3) -- node [anchor=south] {yes} (pro4);
	\draw [arrow] (dec3) -- node [anchor=south]  {no}  (pro5);
	\draw [arrow] (pro5) |- (dec1) ;
	\end{tikzpicture}
\end{figure}
\noindent where for any $(V^{(k)},V_{(k)})$ if $V^{(k)} \cdot V_{(k)} \not =0$ we furtherly define:
\begin{equation}
\begin{split}
V^{(k+1)}&=\{\alpha \in V^{(k)}: \alpha \cdot V_{(k)}=0\}\\
V_{(k+1)}&=Ker(F_{|_{V^{(k+1)}}}) 
\end{split}
\end{equation}
and we represent the final output by $(x,s,V^{(s)},V_{(s)})$, where for a output $(x,t)$ we set $s=t-1$ if $x \in \{B,C\}$ and $s=t$ if $x=A$.\par
Then for the final output we obtain an $algebraic$ $structure$ $sequence$ as follows:
\begin{align}
Ker(F) = V_{(0)} \subseteq...\subseteq V_{(s)}\subseteq V^{(s)}  \subseteq ... \subseteq V^{(0)}=W,
\end{align}
where $ V^{(k)} \cdot V_{(k-1)}=0$ for $1 \leqslant  k \leqslant s$.\par
For the sequence, our first step is to  generalize Proposition 3.4 (i) and Observation \ref{ori_obser} to the following.
\begin{proposition}\label{codim_1}
	For an algebraic structure sequence: $\{(V^{(k)},V_{(k)}): 0 \leqslant k \leqslant s\}$:\par
	$A_{(k)}:$ if $V^{(k+1)} \subsetneqq V^{(k)}$ then $codim(V^{(k+1)},V^{(k)})=1$;\par
	$B_{(k)}:$ if $V_{(k)} \subsetneqq V_{(k+1)}$ then $codim(V_{(k)},V_{(k+1)})=1$.
\end{proposition}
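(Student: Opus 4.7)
My plan is to prove $A_{(k)}$ and $B_{(k)}$ simultaneously by induction on $k$, the key idea being that all the relevant multiplications land in the one-dimensional subspace $\langle \mu_{1} \rangle$. The base case $A_{(0)}$ is exactly Lemma \ref{codim^1}(ii), and $B_{(0)}$ is Observation \ref{ori_obser} (it will also be recovered by the general argument below).

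For the inductive step, I would first derive $B_{(k)}$ from $A_{(k)}$. Define
\[
\sigma: V_{(k+1)}/V_{(k)} \longrightarrow (V^{(k)}/V^{(k+1)})^{*}, \qquad \bar{\beta} \mapsto \bigl[\bar{\gamma} \mapsto F(\beta,\gamma)\bigr].
\]
It is well-defined: $F(V_{(k)},V^{(k)})=0$ kills the source quotient, and $F(V_{(k+1)},V^{(k+1)})=0$ kills the target quotient; it is injective since $F(\beta,V^{(k)})=0$ forces $\beta \in Ker(F|_{V^{(k)}})=V_{(k)}$. Thus $\dim V_{(k+1)}/V_{(k)} \leq \dim V^{(k)}/V^{(k+1)} \leq 1$ by $A_{(k)}$.

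Next, I would derive $A_{(k+1)}$ from $B_{(k)}$. Consider the multiplication map $V^{(k+1)} \times V_{(k+1)} \to R$, $(\alpha,\beta) \mapsto \alpha\beta$. The central claim is that its image lies in $\langle \mu_{1} \rangle$: since $V^{(k+1)} \subseteq V^{(1)}$ and $V_{(k+1)} \subseteq \mathfrak{m}$, Proposition \ref{mainprosec2} gives $\alpha\beta \in \langle \mu_{1},b_{0}\rangle$; and since $\beta \in V_{(k+1)} = Ker(F|_{V^{(k+1)}})$ and $\alpha \in V^{(k+1)}$, we have $F(\alpha,\beta)=0$, so Lemma \ref{basic_multi} forces $\alpha\beta \in Ker(F) = \langle \mu_{1},\mu_{2}\rangle$; the intersection of these two subspaces is $\langle\mu_{1}\rangle$. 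Moreover $V^{(k+1)} \cdot V_{(k)} = 0$ by the very definition of $V^{(k+1)}$, so the bilinear map descends to $V^{(k+1)} \times V_{(k+1)}/V_{(k)} \to \langle \mu_{1} \rangle$, inducing a linear map $V^{(k+1)} \to \mathrm{Hom}(V_{(k+1)}/V_{(k)},\langle\mu_{1}\rangle)$ whose kernel is exactly $V^{(k+2)}$. Therefore $\dim V^{(k+1)}/V^{(k+2)} \leq \dim V_{(k+1)}/V_{(k)} \leq 1$ by $B_{(k)}$, which is $A_{(k+1)}$.

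The main obstacle is the image claim, which requires combining the multiplicative containment from Proposition \ref{mainprosec2} with the orthogonality condition defining $V_{(k+1)}$; this squeezing of the product into the one-dimensional space $\langle\mu_{1}\rangle$ is what keeps the codimension computation at each step controlled by a single linear functional, thereby propagating the corank-two structure along the entire algebraic structure sequence.
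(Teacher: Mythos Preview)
Your proof is correct and follows essentially the same approach as the paper: an induction in which $B_{(k)}$ is obtained from $A_{(k)}$ via the injective map $\sigma: V_{(k+1)}/V_{(k)} \to (V^{(k)}/V^{(k+1)})^{*}$, and $A_{(k+1)}$ is obtained from $B_{(k)}$ by noting that $V^{(k+1)}\cdot V_{(k)}=0$ and that the remaining multiplication $V^{(k+1)}\times V_{(k+1)}\to R$ lands in the one-dimensional space $\langle\mu_{1}\rangle$. Your write-up is in fact slightly more explicit than the paper's at one point: the paper asserts $V^{(k)}\cdot V_{(k)}\subseteq\langle\mu_{1}\rangle$ ``by Proposition~\ref{mainprosec2}(ii)'', whereas that proposition only gives $\langle\mu_{1},b_{0}\rangle$, and you correctly supply the missing step (use $F(\alpha,\beta)=0$ together with formula~(\ref{multi}) to kill the $b_{0}$-component).
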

\begin{proof}
	Firstly note that if $V^{(k+1)} \subsetneqq V^{(k)}$ then $V^{(i+1)} \subsetneqq V^{(i)}$ for any $i \leqslant k-1$, similarly if $V_{(k)} \subsetneqq V_{(k+1)}$ then $V_{(i)} \subsetneqq V_{(i+1)}$ for any $i \leqslant k-1$. Hence we can prove $A_{(k)}$and $B_{(k)}$ by induction on $k$.\par
	For $k=0$, $A_{(0)}$ follows from Lemma \ref{codim^1} and $B_{(0)}$ follows from Observation \ref{ori_obser}. Now assuming $A_{(k-1)}$ and $B_{(k-1)}$ is true for some $k \geqslant 1$, then for a given $(V_{(k)},V^{(k)})$ in the process we already have $V_{(k-1)} \subsetneqq V_{(k)} \subseteq V^{(k)} \subsetneqq V^{(k-1)}$ with $V^{(k)} \cdot V_{(k-1)}=0$ and  $codim(V_{(k-1)},V_{(k)})=1$ by induction. Now since $(R,W,F)$ represents an action on a hyperquadric of corank two with unfixed singularities and $V_{(k)} \subseteq V_{(0)}=W, V^{(k)} \subseteq V^{(1)}$, we have $V^{(k)} \cdot V_{(k)} \subseteq V^{(1)} \cdot W \subseteq  \langle \mu_{1}\rangle $ by Proposition \ref{mainprosec2} (ii). Hence by the definition of $V^{(k+1)}$ we conclude that $codim(V^{(k+1)},V^{(k)}) \leqslant 1 $, implying $A_{(k)}$. \par
	Now if $V_{(k)} \subsetneqq V_{(k+1)}$ then from the process we already have $V^{(k+1)} \subsetneqq V^{(k)}$ and $A_{(k)}$ holds. Moreover we have the chain $V_{(k)} \subsetneqq V_{(k+1)} \subseteq V^{(k+1)} \subsetneqq V^{(k)}$, which induces an injective map:
	\begin{equation*}
	\begin{split}
	V_{(k+1)} / V_{(k)} &\xmapsto{\sigma_{k}} (V^{(k)} / V^{(k+1)})^* \\
	\overline{\alpha} &\mapsto  \sigma(\overline{\alpha}): \overline{\beta} \mapsto F(\alpha,\beta)
	\end{split}
	\end{equation*}
	It follows that $codim(V_{(k)},V_{(k+1)}) \leqslant codim(V^{(k+1)},V^{(k)})=1$, implying $B_{(k)}$.\\
\end{proof}

\subsubsection{Normalization} 
In this subsection we normalize the structure of $(R,W,F)$ by analyzing the algebraic structure sequence case by case.\par 
In the following, we always start with a $b_{0} \in \mathfrak{m}^{2} \backslash W$ and a basis of $Ker(F)$, $\mu_{1},\mu_{2}$, satisfying Proposition \ref{mainprosec2}. We furtherly define $V_{(-1)}=\langle \mu_{1} \rangle $.\\
\begin{mycases}
\case $x=A$. In this case the sequence becomes
	\begin{align*}
	V_{(-1)} \subsetneqq Ker(F)=V_{(0)} \subsetneqq...\subsetneqq V_{(s)}\subseteq V^{(s)}  \subsetneqq ... \subsetneqq V^{(0)}=W,
	\end{align*}
with $V^{(k)} \cdot V_{(k-1)}=V^{(s)} \cdot V_{(s)}=0$ for $ 1 \leqslant k \leqslant s$ (here $s \geqslant 1$ as we assume there exist unfixed singular points). Then we have the following normalization.
\begin{lemma}\label{norm_A}
	(i) If $V^{(s)} \not = V_{(s)}$ then there exist $f_{i} \in  V^{(i-1)} \backslash V^{(i)}$, $g_{i} \in V_{(i)} \backslash V_{(i-1)}$ for $1 \leqslant i \leqslant s$, $g_{0} \doteq \mu_{2}$ and $\{e_{k}: 1\leqslant k \leqslant p\} \subseteq V^{(s)} \backslash V_{(s)}$ such that 
	\begin{equation}
	\begin{split}
	V^{(s)}&=V_{(s)}\oplus \langle e_{1},...,e_{p} \rangle, \\
	e_{k} \cdot e_{l} &=\delta_{k,l} \cdot b_{0}+V_{1}(e_{k},e_{l}) \cdot \mu_{1},  \\
	\end{split}
	\end{equation}
	and
	\begin{equation}
     \begin{split}
	e_{k} \cdot f_{i}&=e_{k} \cdot g_{i}=f_{i} \cdot f_{j}=f_{v} \cdot g_{v'}=0, \\
	f_{i} \cdot g_{i}&=b_{0}, \,	f_{i} \cdot g_{i-1}=\mu_{1}, \, f_{1}^2=\delta \cdot \mu_{2},
	\end{split}
	\end{equation} 
	for $1 \leqslant i  \leqslant s$, $1 \leqslant k ,l \leqslant p $, $ v-v' \not \in \{0,1\}$, $ 2\leqslant j \leqslant s$ when $s \geqslant 2$,
	\[
	\delta=
	\begin{cases*}
	0 &\text{if $dim(\mathfrak{m}^2)=2$;}\\
	1 & \text{if $dim(\mathfrak{m}^2)=3$}
	\end{cases*}
	\] 
	and  the matrix $\Lambda=(V_{1}(e_{k},e_{l}):1 \leqslant k , l \leqslant p )$ is of the canonical form (see (\ref{canonical_form}) below).
	\par
	(ii) If $V^{(s)}=V_{(s)}$ then there exist $f_{i} \in  V^{(i-1)} \backslash V^{(i)}$, $g_{i} \in V_{(i)} \backslash V_{(i-1)}$ for $1 \leqslant i \leqslant s$, $g_{0} \doteq \mu_{2}$ such that
	\begin{align*}
	f_{i} \cdot f_{j}=f_{v} \cdot g_{v'}=0, \, f_{i} \cdot g_{i}&=b_{0}, \,	f_{i} \cdot g_{i-1}=\mu_{1}, \, f_{1}^2=\delta \cdot \mu_{2},
	\end{align*} 
	for $1 \leqslant i \leqslant s$, $v-v' \not \in \{0,1\}$, $ 2\leqslant j \leqslant s$ when $s \geqslant 2$, and
$\delta$ is the same as in (i).
\end{lemma}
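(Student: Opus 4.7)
The plan is to choose compatible bases for the two filtrations, normalize the diagonal relations $f_i g_{i-1}=\mu_1$ and $f_i g_i=b_0$ first, and then eliminate cross-terms by successively modifying basis vectors with elements from deeper pieces of the filtration. Proposition~\ref{codim_1} guarantees that each quotient $V^{(i-1)}/V^{(i)}$ and $V_{(i)}/V_{(i-1)}$ is one-dimensional, so I pick $f_i \in V^{(i-1)} \setminus V^{(i)}$ and $g_i \in V_{(i)} \setminus V_{(i-1)}$ for $1 \le i \le s$, with $g_0:=\mu_2$. In case (i), the form $F$ descends to a nondegenerate form on $V^{(s)}/V_{(s)}$, so I lift an orthonormal basis to obtain $e_1,\dots,e_p \in V^{(s)} \setminus V_{(s)}$ with $F(e_k,e_l)=\delta_{k,l}$; case (ii) corresponds to $p=0$ and simply omits this step. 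Since all chosen vectors lie in $V^{(1)}$, Proposition~\ref{mainprosec2}(ii) gives $e_k e_l = \delta_{k,l}\, b_0 + V_1(e_k,e_l)\mu_1$, and orthogonal rotations of the $e_k$-basis conjugate $\Lambda=(V_1(e_k,e_l))$ orthogonally, so $\Lambda$ can be put in canonical form exactly as in the corank-one case of \cite[Prop.~7]{AP}.

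For the diagonal normalizations, I use two elementary observations. First, for $i \ge 2$, both $f_i$ and $g_{i-1}$ lie in $V^{(i-1)}$, and $g_{i-1} \in V_{(i-1)}=\mathrm{Ker}(F|_{V^{(i-1)}})$ forces $F(f_i,g_{i-1})=0$, so $f_i g_{i-1} \in \langle\mu_1\rangle$; this is nonzero since $f_i \notin V^{(i)}$ while $V^{(i-1)}\cdot V_{(i-2)}=0$ (the defining identity for $V^{(i-1)}$). For $i=1$, $f_1 \mu_2 \in \langle\mu_1\rangle$ by Proposition~\ref{mainprosec2}(ii), and is nonzero since $f_1 \notin V^{(1)}$ while $\mu_1 \mathfrak{m}=0$. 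Rescaling $f_i$ gives $f_i g_{i-1}=\mu_1$. Next, $F(f_i,g_i)\ne 0$: as $g_i \in V_{(i)}\setminus V_{(i-1)}$, the functional $F(g_i,\cdot)$ on $V^{(i-1)}$ vanishes on $V^{(i)}$ but not identically, so it pairs nontrivially with $f_i$. After rescaling $g_i$ so $F(f_i,g_i)=1$ and replacing $g_i$ by $g_i - V_1(f_i,g_i)\, g_{i-1}$, I obtain $f_i g_i = b_0$.

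The remaining relations are handled by iterative eliminations. When $v' \le v-2$, one has $f_v g_{v'} \in V^{(v-1)}\cdot V_{(v-2)}=0$ directly. For $v' \ge v+1$, and likewise for $f_i f_j$ with $(i,j)\ne(1,1)$, $e_k f_i$, and $e_k g_i$, each such product already lies in $\langle\mu_1,b_0\rangle$; I kill it by adjusting one factor by an element of a strictly deeper piece of the filtration, chosen so that the previously normalized relations are preserved. Finally, $f_1^2 = F(f_1,f_1) b_0 + V_1(f_1,f_1)\mu_1 + V_2(f_1,f_1)\mu_2$ is normalized by a shift $f_1 \mapsto f_1 + \alpha$ with $\alpha \in V^{(1)}$ (using the freedom of $f_1$ modulo $V^{(1)}$) to cancel the $b_0$ and $\mu_1$ coefficients; a final rescaling of $\mu_1$ and $\mu_2$ makes the $\mu_2$-coefficient equal to $\delta \in \{0,1\}$, with $\delta=1$ iff $\mu_2 \in \mathfrak{m}^2$, i.e.\ $\dim \mathfrak{m}^2=3$, by Lemma~\ref{basic_multi}.

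The principal technical obstacle is the combinatorics of the cross-product step: each modification must leave the diagonal relations $f_i g_{i-1}=\mu_1$, $f_i g_i=b_0$, and the orthonormality of the $e_k$ intact. I control this by an inductive ordering on filtration depth (e.g.\ decreasing on $v'-v$, or decreasing on the $V^{(\bullet)}$-index of the modified vector), invoking at each stage the identity $V^{(i)}\cdot V_{(i-1)}=0$ to certify that the adjustment vector is annihilated by the relevant factor; this is the analogue in higher corank of the clean symplectic-style reduction already used in Lemma~\ref{orlemma}.
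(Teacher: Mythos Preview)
Your strategy is essentially the paper's, but the order in which you carry out the normalizations creates a genuine gap at the final step. Once you have fixed $f_i g_{i-1}=\mu_1$ and $f_i g_i=b_0$ for all $i$, you can no longer freely rescale $\mu_1$ and $\mu_2$: replacing $\mu_2$ by $\lambda\mu_2$ forces (via $g_0=\mu_2$ and $f_1 g_0=\mu_1$) the replacement $\mu_1\mapsto\lambda\mu_1$, and then every relation $f_i g_{i-1}=\mu_1$ with $i\ge 2$ is destroyed, since its left-hand side is unchanged. So your ``final rescaling of $\mu_1$ and $\mu_2$'' cannot bring the coefficient $d_0$ of $f_1^2=d_0\mu_2$ to $\delta\in\{0,1\}$ without breaking what you have already built. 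A similar problem occurs with the shift $f_1\mapsto f_1+\alpha$ meant to kill the $b_0$--coefficient of $f_1^2$: after the cross-term step the only $\alpha\in V^{(1)}$ with $F(f_1,\alpha)\ne 0$ has a nonzero $g_1$--component, and since $g_1 f_2=\mu_1$ this shift ruins $f_1 f_2=0$; using an $e_k$ instead ruins $f_1 e_k=0$ because $e_k^2=b_0+\lambda_{kk}\mu_1$.

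The paper circumvents both obstacles by a different ordering. It imposes $F(f_{i},f_{i})=0$ \emph{at the moment $f_i$ is introduced} (adding a multiple of $g_i$ before any cross-term work), so the later shift of $f_i$ by $h_{i-1}\in V_{(i-1)}$ only needs to kill the $\mu_1$--coefficient of $f_i^2$. More importantly, the paper does \emph{not} normalize $f_i g_{i-1}$ to $\mu_1$ along the way: it leaves $f_i g_{i-1}=c_i\mu_1$ and $f_1^2=d_0\mu_2$ with unspecified nonzero constants, performs all cross-term eliminations, and only at the very end rescales $f_i\mapsto x_i f_i$, $g_j\mapsto y_j g_j$, $\mu_2\mapsto y_0\mu_2$ simultaneously, solving the coupled system $x_i y_i=1$, $x_i y_{i-1}=c_i^{-1}$, $x_1^2 d_0=y_0\delta$. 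Your argument becomes correct if you adopt this deferral: keep the constants $c_i$ through the elimination phase and then solve the global rescaling system, rather than fixing $f_i g_{i-1}=\mu_1$ early and attempting a local fix at the end.
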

\begin{proof}
Recall by Proposition \ref{mainprosec2} (ii) we always have $V^{(0)} \cdot V^{(0)} \subseteq \langle \mu_{1},\mu_{2},b_{0} \rangle $ and $V^{(1)} \cdot V^{(0)} \subseteq \langle \mu_{1},b_{0} \rangle$. Hence if choosing any nonzero $f_{i} \in V^{(i-1)} \backslash V^{(i)}$ and any nonzero $h_{i-1} \in V_{(i-1)} \backslash V_{(i-2)}$ then we can have $f_{i} \cdot h_{i-1} =c \cdot \mu_{1}$ for some nonzero $c$ as $V^{(i-1)} \cdot V_{(i-2)}=0$, $V^{(i-1)} \cdot V_{(i-1)}\not=0$  and $codim(V_{(i-2)},V_{(i-1)})=1$. Moreover choosing any nonzero $g_{i} \in V_{(i)} \backslash V_{(i-1)}$ we have $F(f_{i},g_{i}) \not =0$ from the definition of $V_{(i)}$. \par
(i) If $V_{(s)} \not =V^{(s)}$ we can choose $e_{k}$'s satisfying (4.3), i.e., $F(e_{k},e_{l})=\delta_{k,l}$. Then we find $f_{i},g_{i}$ inductively. For $i=s$ we first choose $f_{s} \in V^{(s-1)}\backslash V^{(s)}$, $g_{s} \in V_{(s)} \backslash V_{(s-1)}$ and  $h_{s-1} \in V_{(s-1)} \backslash V_{(s-2)}$ s.t. $f_{s} \cdot h_{s-1} =\mu_{1}$, $F(f_{s},g_{s})=1$ and $F(f_{s},f_{s})=0$. Then for the multiplications: 
\begin{equation*}
\begin{split}
 f_{s} \cdot g_{s}&=b_{0}+V_{1}(f_{s},g_{s}) \cdot \mu_{1},\\
  f_{s}\cdot e_{k}&=F(f_{s},e_{k}) \cdot b_{0} +V_{1}(f_{s},e_{k}) \cdot \mu_{1},\\
 f_{s}^2&=V_{1}(f_{s},f_{s}) \cdot \mu_{1}+V_{2}(f_{s},f_{s}) \cdot \mu_{2},
 \end{split}
\end{equation*}
we can normalize them through the following steps:
\begin{align*}
 g_{s} &\mapsto g_{s}-V_{1}(f_{s},g_{s}) \cdot h_{s-1} &&\text{to make $g_{s} \cdot f_{s}=b_{0}$} \\
 e_{k} & \mapsto e_{k}+F(f_{s},e_{k}) \cdot g_{s}-V_{1}(f_{s},e_{k}) \cdot h_{s-1} &&\text{to make $f_{s} \cdot e_{k}=0$}\\
 f_{s} &\mapsto  f_{s}-(V_{1}(f_{s},f_{s})/2) \cdot h_{s-1} &&\text{to make $f_{s}^2=
 	\begin{cases*}
 	0 &\text{if $s \geqslant 2$}\\
 	d_{0} \cdot \mu_{2} & \text{if $s=1$}
 	\end{cases*}
 	$}
\end{align*}
for some $d_{0} \in \mathbb{K}$, where the arrow $A \mapsto B$ means to replace $A$ by $B$.\par
Now if $s \geqslant 2$ and assuming we have found $S_{i_{0}}=\{f_{i},g_{i}:i \geqslant i_{0}+1\}$ for some $1 \leqslant i_{0} \leqslant s-1$ satisfying (4.4) except that if there exist $i$ such that $i \geqslant i_{0}+2$ then $f_{i} \cdot g_{i-1}=c_{i} \cdot \mu_{1}$ for some nonzero $c_{i} \in \mathbb{K}$ . Then we furtherly choose $f_{i_{0}} \in V^{(i_{0}-1)} \backslash V^{(i_{0})}$,  $g_{i_{0}} \in V_{(i_{0})} \backslash V_{(i_{0}-1)}$ and $h_{i_{0}-1} \in V_{(i_{0}-1)} \backslash V_{(i_{0}-2)}$\,  s.t. $f_{i_{0}} \cdot h_{i_{0}-1} =\mu_{1}$, $F(f_{i_{0}},g_{i_{0}})=1$ and $F(f_{i_{0}},f_{i_{0}})=0$. And we normalize the multiplications through the following steps. Firstly:
\begin{align*}
f_{i_{0}}\mapsto f_{i_{0}}-\sum_{i=i_{0}+1}^{s} (F(f_{i_{0}},f_{i}) \cdot g_{i}+F(f_{i_{0}},g_{i}) \cdot f_{i}) - \sum_{k=1}^{p} F(e_{k},f_{i_{0}}) \cdot e_{k} 
\end{align*}
to make $F(\alpha,f_{i_{0}})=0$ for all $\alpha \in S_{i_{0}} \cup \{e_{k}:1 \leqslant k \leqslant p\}$. Then
\begin{align*}
\alpha    &\mapsto \alpha- V_{1}(\alpha,f_{i_{0}}) \cdot h_{i_{0}-1}  &&\text{to make $\alpha \cdot f_{i_{0}}=0$}\\
g_{i_{0}} &\mapsto  g_{i_{0}}-V_{1}(f_{i_{0}},g_{i_{0}}) \cdot h_{i_{0}-1} &&\text{to make $f_{i_{0}} \cdot g_{i_{0}}=b_{0}$}\\
f_{i_{0}} &\mapsto  f_{i_{0}}-(V_{1}(f_{i_{0}},f_{i_{0}})/2) \cdot h_{i_{0}-1} &&\text{to make $f_{i_{0}}^2=
	\begin{cases*}
	0 &\text{if $i_{0} \geqslant 2$}\\
	d_{0} \cdot \mu_{2} & \text{if $i_{0}=1$}
	\end{cases*}
	$}
\end{align*}\par
\noindent for some $d_{0} \in \mathbb{K}$.
Moreover from the discussion at the beginning we have $f_{i_{0}+1} \cdot g_{i_{0}} =c_{i_{0}+1} \cdot \mu_{1}$  with some nonzero $c_{i_{0}+1} \in \mathbb{K}$. And from $\mathfrak{m}^{2} \subseteq \langle \mu_{1},\mu_{2},b_{0}\rangle,b_{0} \cdot \mathfrak{m}=0,V^{(1)} \cdot \mathfrak{m} \subseteq \langle b_{0},\mu_{1} \rangle$ we have $d_{0} =0 $ if and only if $dim(\mathfrak{m}^2)=2$. Finally note that the symmetric matrix $\Lambda=(V_{1}(e_{k},e_{l}))$ under orthogonal transformations on  $\{e_{1},...,e_{p}\}$ transforms as the matrix of a bilinear form. And a such transformation will not affect our normalization on other elements, hence from \cite[Chapter XI \S 3]{matrices}, $\Lambda=(V_{1}(e_{i},e_{j}))$ can be transformed into a canonical symmetric block diagonal matrix (see (\ref{canonical_form}) in Proposition \ref{classify_A}).\par
To finish our normalization it suffices to make $f_{i} \cdot g_{i-1}=\mu_{1}$ and $f_{1}^2=\delta \cdot \mu_{2}$. To do this we firstly replace $f_{i}$ by $x_{i} \cdot f_{i}$ and replace $g_{i}$ by $y_{i} \cdot g_{i}$. Then the condition $(f_{i} \cdot g_{i-1}=\mu_{1},f_{1}^2=\delta \cdot \mu_{2},f_{i} \cdot g_{i}=b_{0})$ gives a system of equations for $\{x_{i},y_{j}:1 \leqslant i \leqslant s, 0 \leqslant j \leqslant s\}$:
\begin{equation}
x_{i} \cdot y_{i}=1,\,
x_{i} \cdot y_{i-1}=c_{i}^{-1},\,
x_{1}^2 \cdot d_{0}=y_{0} \cdot \delta
\end{equation}
 for which we have a solution to be calculated inductively:\\
 ($\delta=1$)
 $\begin{cases}
 	x_{i}=y_{i}^{-1}\\
 	y_{i}=y_{i-1} \cdot c_{i} \\
 	y_{0}=(\frac{d_{0}}{c_{1}^{2}})^{\frac{1}{3}}
 \end{cases}$  \,\,\,\,\,\,\,\,\,\, and \,\,\,\,\,\,\,\,\,\,\,\,\, ($\delta=0$)
 $\begin{cases}
 x_{i}=y_{i}^{-1}\\
 y_{i}=y_{i-1} \cdot c_{i} 
 \end{cases}$ \\
 concluding the normalization.\\
(ii) If $V^{(s)}=V_{(s)}$ then the process of normalization will be the same as in (i) except that we do not need to choose $e_{k}$ at the beginning. 
\end{proof}
 Following our normalization we can thus determine the normalized  structure of $(R,W,F)$ in Case 1.
 \begin{proposition}[Classification of $Type$ $ A$]\label{classify_A} $(R,W,F)$ can be transformed into the following:\\
 	$\bullet \,Type$ $A_{1}$\label{A_1}: $Q^{(s)}$ is a projective space (equivalently $V_{(s)}=V^{(s)}$)
 	\begin{equation*}
 	M(F,Type A_{1})=
 	\left(\begin{matrix}
 	0 & 0 & 0      & 0     & 0     & 0      & 0      & 0 \\
 	0 & 0 & 0      & 0     & 0     & 0      & 0      & 0 \\
 	0 & 0 &0       & \dots  & 0     &0    & \dots    & 1 \\
 	0 & 0 & \vdots & \dots  & 0     & \vdots & \reflectbox{$\ddots$} & \vdots \\
 	0 & 0 &     0  & \dots  & 0     & 1      & \dots & 0   \\
 	0 & 0 & 0      & \dots & 1     & 0      & \dots & 0\\
 	0 & 0 & \vdots & \reflectbox{$\ddots$}& \vdots & 0      & \dots & 0\\
 	0 & 0 & 1     & \dots & 0     & 0      & \dots & 0
 	\end{matrix}\right), 
 	\end{equation*}
 	$W=\langle \mu_{1},\mu_{2} \rangle \oplus \langle g_{1},...,g_{s} \rangle \oplus \langle f_{s},...,f_{1} \rangle$. \\
 	$R \cong \mathbb{K}[\mu_{1},\mu_{2},g_{1},...,g_{s},f_{1},...,f_{s}]/(\mu_{1}\cdot W,g_{i}\cdot \mu_{2},f_{l} \cdot \mu_{2},g_{i} \cdot f_{i}-g_{v} \cdot f_{v},g_{l-1} \cdot f_{l}-\mu_{1},f_{1} \cdot \mu_{2}-\mu_{1},f_{1}^2-\delta \cdot \mu_{2},g_{i} \cdot g_{v},f_{h} \cdot g_{h'},f_{l} \cdot f_{i}, 1 \leqslant i,v \leqslant s ,  \, h-h' \not \in \{0,1\} $, $2 \leqslant l \leqslant s$ when $s \geqslant 2 )$ where 
 	\[
 	\delta=
 	\begin{cases*}
 	0 &\text{if $dim(\mathfrak{m}^2)=2$;}\\
 	1 & \text{if $dim(\mathfrak{m}^3)=3$}
 	\end{cases*}
 	\]\\
 	$\bullet \, Type\, A_{2}$\label{A_2}: $Q^{(s)}$ is a hyperquadric (equivalently $V_{(s)} \not = V^{(s)}$).
 	\begin{equation*}
 	M(F,Type A_{2})=
 	\left(\begin{matrix}
0  &0 & 0      & 0     & 0      & 0      & 0     & 0      & 0      & 0     & 0 \\	
0  &0 & 0      & 0     & 0      & 0      & 0     & 0      & 0      & 0     & 0 \\
0  &0 & 0      & \dots & 0      & 0      & \dots & 0      & 0      & \dots & 1 \\
0  &0 & \vdots & \dots & 0      & \vdots & \dots & \vdots & \vdots & \reflectbox{$\ddots$} & \vdots \\
0  &0 & 0      & \dots & 0      & 0      & \dots & 0      & 1      & \dots & 0      \\
0  &0 & 0      & \dots & 0      & 1     & \dots & 0      & 0      & \dots & 0 	 \\
0  &0 & \vdots & \dots & 0      & \vdots & \ddots& \vdots & \vdots & \dots & \vdots\\
0  &0 & 0      & \dots & 0      & 0      & \dots & 1     & 0      & \dots & 0\\
0  &0 & 0      & \dots & 1      & 0      & \dots & 0      & 0      & \dots & 0\\
0  &0 & \vdots & \reflectbox{$\ddots$}& \vdots & \vdots & \dots & \vdots & 0      & \dots & 0\\
0  &0 & 1      & \dots & 0     & 0      & \dots & 0      & 0      & \dots & 0
 	\end{matrix}\right), 
 	\end{equation*}
 	$W=\langle \mu_{1},\mu_{2} \rangle \oplus \langle g_{1},...,g_{s} \rangle \oplus \langle e_{1},...,e_{p}\rangle  \oplus \langle f_{s},...,f_{1} \rangle$.\\
 	$R \cong \mathbb{K}[\mu_{1},\mu_{2},g_{1},...,g_{s},e_{1},...,e_{p},f_{1},...,f_{s}]/(\mu_{1}\cdot W,g_{i}\cdot \mu_{2},e_{i'} \cdot \mu_{2},f_{l} \cdot \mu_{2},g_{i} \cdot f_{i}-e_{i'}^2+\lambda_{i'i'}\mu_{1},e_{i'} \cdot e_{i''}-\lambda_{i'i''}\mu_{1},g_{l-1} \cdot f_{l}-\mu_{1},f_{1} \cdot \mu_{2}-\mu_{1},f_{1}^2-\delta \cdot \mu_{2},g_{i} \cdot g_{v},f_{l} \cdot f_{i},f_{h} \cdot g_{h'},e_{i'} \cdot f_{i},e_{i'} \cdot g_{i}\,, 1 \leqslant i,v \leqslant s ,1 \leqslant i' \not = i'' \leqslant p,  h-h' \not \in \{0,1\}, 2 \leqslant l \leqslant s$ when $s \geqslant 2)$	where $\delta$ is the same as in $Type \, A_{1}$ and $\Lambda= (\lambda_{i'i''})$ is of the standard form, i.e., a symmetric block diagonal $t \times t$-matrix such that each block $\Lambda_{k}$ is 
 	\begin{equation}\label{canonical_form}
 	\lambda_{k}  \left(
 	\begin{matrix}
 	
 	1     & 0         &          & 0 \\
 	0     & \ddots  &\ddots  &   \\
 	& \ddots  &\ddots  & 0 \\
 	0     &           &0         & 1
 	\end{matrix}
 	\right) +
 	\frac{1}{2}\left(
 	\begin{matrix}
 	
 	0     & 1         &          & 0 \\
 	1     & \ddots  &\ddots  &   \\
 	& \ddots  &\ddots  & 1  \\
 	0     &           &1         & 0
 	\end{matrix}
 	\right) +
 	\frac{i}{2} \left(
 	\begin{matrix}
 	
 	0     &           &1         & 0 \\
 	& \ddots  &\ddots  & -1 \\
 	1     & \ddots  &\ddots  & 0 \\
 	0     & -1        &          & 0
 	\end{matrix}
 	\right)
 	\end{equation}
 	with some $\lambda_{k} \in \mathbb{K}$.
 \end{proposition}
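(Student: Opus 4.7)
The plan is to derive Proposition \ref{classify_A} as an essentially immediate consequence of Lemma \ref{norm_A}, by translating the multiplication table into both the Gram matrix of $F$ and the polynomial presentation of $R$.

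First I would apply Lemma \ref{norm_A} in each subcase. When $V^{(s)}=V_{(s)}$ (the Type $A_{1}$ case), part (ii) produces a basis $\{\mu_{1},\mu_{2},g_{1},\ldots,g_{s},f_{1},\ldots,f_{s}\}$ of $W$ together with $b_{0}\in\mathfrak{m}^{2}\setminus W$ such that every listed product among basis vectors is realized. When $V^{(s)}\neq V_{(s)}$ (the Type $A_{2}$ case), part (i) augments this by vectors $e_{1},\ldots,e_{p}\in V^{(s)}\setminus V_{(s)}$ with $e_{k}\cdot e_{l}=\delta_{kl}b_{0}+V_{1}(e_{k},e_{l})\mu_{1}$, where the matrix $\Lambda=(V_{1}(e_{k},e_{l}))$ has been brought to the canonical form (\ref{canonical_form}). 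All remaining products among generators of $\mathfrak{m}$ are controlled by Proposition \ref{mainprosec2}(ii): $\mu_{1}\cdot\mathfrak{m}=b_{0}\cdot\mathfrak{m}=0$ and $\mu_{2}\cdot\mathfrak{m}\subseteq\langle\mu_{1}\rangle$.

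Next I would compute the Gram matrix of $F$ in the chosen basis. By Lemma \ref{y0}, $F(a,a')=y_{0}(aa')$ for $a,a'\in\mathfrak{m}$, where $y_{0}$ projects onto the $b_{0}$-coordinate and annihilates $W$. Since $\mathrm{Ker}(F)=\langle\mu_{1},\mu_{2}\rangle$ by construction, the rows and columns indexed by $\mu_{1},\mu_{2}$ are zero. Every other product recorded in Lemma \ref{norm_A} is either of the form $b_{0}$ plus an element of $W$, or lies entirely in $W$; hence the only nonzero Gram entries are $F(f_{i},g_{i})=1$ (in both types) and $F(e_{k},e_{l})=\delta_{kl}$ in Type $A_{2}$. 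Reading these into the ordering $\mu_{1},\mu_{2},g_{1},\ldots,g_{s},(e_{1},\ldots,e_{p}),f_{s},\ldots,f_{1}$ yields exactly the displayed matrices $M(F,\text{Type }A_{1})$ and $M(F,\text{Type }A_{2})$; the anti-diagonal of $1$'s arises because the $f$-block has been reversed.

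Finally I would translate the multiplication table into the claimed polynomial-quotient presentation of $R$. The listed relations are exactly the quadratic products from Lemma \ref{norm_A} together with the vanishing relations $\mu_{1}\cdot W=0$ and $\mu_{2}\cdot x\in\langle\mu_{1}\rangle$ from Proposition \ref{mainprosec2}(ii). The only point that requires a brief verification is that no cubic or higher relations need be imposed: since $\mathfrak{m}^{2}\subseteq\langle\mu_{1},\mu_{2},b_{0}\rangle$ and each of $\mu_{1},\mu_{2},b_{0}$ multiplies $\mathfrak{m}$ into $\langle\mu_{1}\rangle$, one has $\mathfrak{m}^{3}\subseteq\langle\mu_{1}\rangle$, so every higher-degree monomial in the generators is already forced by the quadratic relations. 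I expect the only real obstacle to be the bookkeeping involved in tracking the parameter $\delta\in\{0,1\}$, which distinguishes $\dim\mathfrak{m}^{2}=2$ from $\dim\mathfrak{m}^{2}=3$ and must appear consistently throughout the presentation; the intrinsic well-definedness of $\delta$ (and of the normalized form as a whole) will be settled later in Section~4 as part of the uniqueness portion of Theorem~\ref{determine_by_degeneration}(iii).
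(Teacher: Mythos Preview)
Your proposal is correct and follows the same route as the paper: in the paper, Proposition \ref{classify_A} is stated immediately after Lemma \ref{norm_A} with only the sentence ``Following our normalization we can thus determine the normalized structure of $(R,W,F)$ in Case 1,'' so the proposition is presented as a direct read-off of the multiplication table obtained in that lemma. You carry out precisely this read-off, and in fact supply more detail than the paper does (the Gram-matrix computation via $F(a,a')=y_{0}(aa')$ and the observation that $\mathfrak{m}^{3}\subseteq\langle\mu_{1}\rangle$ forces all higher relations). One small point you leave implicit but should record: the vanishing $g_{i}\cdot g_{v}=0$ does not appear verbatim in Lemma \ref{norm_A}, but it follows from the Type $A$ hypothesis $V^{(s)}\cdot V_{(s)}=0$ together with $g_{i}\in V_{(i)}\subseteq V_{(s)}\subseteq V^{(s)}$.
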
 
\case $x=B$. In this case the sequence becomes
\begin{align*}
\langle \mu_{1}\rangle =V_{(-1)} \subseteq Ker(F)=V_{(0)} \subseteq...\subseteq V_{(s)}\subseteq V^{(s)}  \subseteq ... \subseteq V^{(0)}=W,
\end{align*}
with $V^{(k)} \cdot V_{(k-1)}=0$ for $k \geqslant 0$ and $V_{(s)}^2 \not =0$. We have the following normalization.
\begin{lemma}\label{norm_B}
(i) If $V^{(s)} \not = V_{(s)}$ and $s \geqslant 1$ then there exist $f_{i} \in V^{(i-1)} \backslash V^{(i)}$, $g_{i} \in V_{i} \backslash V_{(i-1)}$ for $1\leqslant i \leqslant s $, $g_{0} \doteq \mu_{2}$ and $\{e_{k}: 1 \leqslant k \leqslant p\} \subseteq V^{(s)} \backslash V_{(s)}$ such that
 \begin{equation}
 \begin{split}
 V^{(s)}&=V_{(s)}\oplus \langle e_{1},...,e_{p} \rangle, \\
 e_{k} \cdot e_{l} &=\delta_{k,l} \cdot b_{0}+V_{1}(e_{k},e_{l}) \cdot \mu_{1}, \\
 \end{split}
 \end{equation}
 and
 \begin{equation}
 \begin{split}
 e_{k} \cdot f_{i}&=e_{k} \cdot g_{i}=f_{i} \cdot f_{j}=f_{v} \cdot g_{v'}=0, \\
 f_{i} \cdot g_{i}&=b_{0}, \,	f_{i} \cdot g_{i-1}=\mu_{1}, \, f_{1}^2=\delta \cdot \mu_{2}, \, g_{s}^2=\mu_{1},
 \end{split}
 \end{equation} 
 	for $1 \leqslant i \leqslant s$, $1 \leqslant k, l \leqslant p $, $ v-v' \not \in \{0,1\}$, $ 2\leqslant j \leqslant s$ when $s \geqslant 2$,
 \[
 \delta=
 \begin{cases*}
 0 &\text{if $dim(\mathfrak{m}^2)=2$;}\\
 1 & \text{if $dim(\mathfrak{m}^3)=3$}
 \end{cases*}
 \] 
 and  the matrix $\Lambda=(V_{1}(e_{k},e_{l}))$ is of the canonical form (4.6).\par
 (ii) If $V^{(s)}=V_{(s)}$ and $s \geqslant 1$ then there exists $f_{i} \in  V^{(i-1)} \backslash V^{(i)}$, $g_{i} \in V_{(i)} \backslash V_{(i-1)}$ for $1 \leqslant i \leqslant s$, $g_{0} =\mu_{2}$ such that
 \begin{align*}
 f_{i} \cdot g_{i}&=b_{0}, \,	f_{i} \cdot g_{i-1}=\mu_{1}, \,f_{i} \cdot f_{j}=f_{v} \cdot g_{v'}=0, \, f_{1}^2=\delta \cdot \mu_{2}, \, g_{s}^2=\mu_{1},
 \end{align*} 
 for $1 \leqslant i  \leqslant s$, $v-v' \not \in \{0,1\}$, $ 2\leqslant j \leqslant s$ when $s \geqslant 2$, and $\delta$ is the same as in (i).\par
 (iii) If $s=0$ then there exists a basis of $Ker(F)=\langle \mu_{1},\mu_{2} \rangle$ and $\{e_{k}:1 \leqslant k \leqslant p\}$ such that $\mu_{2}^2=\mu_{1},e_{k} \cdot \mu_{2}=0$ and 
 \begin{equation*}
 \begin{split}
W&=Ker(F)\oplus \langle e_{1},...,e_{p} \rangle \\
e_{k} \cdot e_{l} &=\delta_{k,l} \cdot b_{0}+V_{1}(e_{k},e_{l}) \cdot \mu_{1}, \notag \\
\end{split}
 \end{equation*}
and the matrix $\Lambda=(V_{1}(e_{k},e_{l}))$ is of the canonical form (4.6).
\end{lemma}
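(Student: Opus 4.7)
The approach is to run the same inductive normalization as in the proof of Lemma~\ref{norm_A}, grafting on one new ingredient: the element $g_s\in V_{(s)}$ must additionally be chosen so that $g_s^2=\mu_1$. The structural input permitting this is the algebraic meaning of the $B$-output at level $s+1$. From the inclusion $V_{(s)}\cap V^{(s+1)}\subseteq V_{(s+1)}$ (forced by $F$-orthogonality on $V^{(s)}$), the output condition $V_{(s)}\not\subseteq V_{(s+1)}$ is equivalent to $V_{(s)}\cdot V_{(s)}\neq 0$, whereas the preceding ``yes'' branches of the flow chart force $V_{(i)}\cdot V_{(i)}=0$ for every $i<s$. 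These two facts are the core structural inputs.

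For parts (i) and (ii), with $s\geqslant 1$, I first pick $g_s\in V_{(s)}\setminus V_{(s-1)}$ with $g_s^2\neq 0$, which exists because $V_{(s-1)}^2=0$ while $V_{(s)}^2\neq 0$. Since $g_s\in V_{(s)}\subseteq V^{(1)}$, Proposition~\ref{mainprosec2}(ii) gives $g_s^2\in\langle\mu_1,b_0\rangle$, while $g_s\in V_{(s)}=Ker(F|_{V^{(s)}})$ combined with Lemma~\ref{y0} yields $F(g_s,g_s)=y_0(g_s^2)=0$, killing the $b_0$-component. Hence $g_s^2=a\mu_1$ with $a\neq 0$, and after rescaling I arrange $g_s^2=\mu_1$.

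Next I execute the inductive procedure from the proof of Lemma~\ref{norm_A} verbatim to produce $f_i,g_i,h_{i-1}$ and, in case (i), the $e_k$'s satisfying the off-diagonal vanishing relations together with the canonical form of $\Lambda=(V_1(e_k,e_l))$. The essential compatibility check is that the normalization never disturbs $g_s^2=\mu_1$: every correction to $g_s$ appearing in that proof has the form $g_s\mapsto g_s+\alpha$ with $\alpha$ a multiple of some $h_{i-1}\in V_{(i-1)}\subseteq V_{(s-1)}$, and for any such $\alpha\in V_{(s-1)}$ both $g_s\cdot\alpha\in V^{(s)}\cdot V_{(s-1)}=0$ and $\alpha^2\in V_{(s-1)}^2=0$, so $(g_s+\alpha)^2=g_s^2$. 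For the final rescaling $f_i\mapsto x_if_i$, $g_i\mapsto y_ig_i$, I additionally allow $\mu_1\mapsto z_1\mu_1$ and $\mu_2\mapsto z_2\mu_2$; this augments the end-of-proof system of Lemma~\ref{norm_A} by the single new equation $y_s^2=z_1$ and adjusts the other right-hand sides by $z_1$, $z_2$. The resulting system has enough freedom to be simultaneously solvable (for instance, fix $y_s=1$ so $z_1=1$, then solve $y_i$ inductively via the recursion $y_i=y_{i-1}c_i$, and finally read off $z_2$ from the $f_1^2$ equation).

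Part (iii), the case $s=0$, corresponds to the subcase $Ker(F)\cdot Ker(F)\neq 0$ already treated in Subsection~3.2.3: the lemma there delivers $\mu_2^2=\mu_1$, $b_0=e_1^2$, $e_k\cdot\mu_2=e_k\cdot\mu_1=0$, together with $e_ke_l=\delta_{kl}b_0+V_1(e_k,e_l)\mu_1$, and the reduction of $\Lambda=(V_1(e_k,e_l))$ to canonical form follows from an orthogonal change of basis on $\{e_k\}$ preserving $F(e_k,e_l)=\delta_{kl}$ via \cite[Chapter XI, \S3]{matrices}. The main obstacle, I expect, lies in the bookkeeping for the final rescaling in parts (i) and (ii): verifying that the augmented scaling system in the $\delta=1$ case, where $y_0$ is otherwise rigidly constrained by the cube-root relation of Lemma~\ref{norm_A}, remains genuinely solvable once the extra equation $y_s^2=z_1$ is imposed—the extra freedom in $z_1,z_2$ must be shown to absorb any apparent over-determination.
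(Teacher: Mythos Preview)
Your approach is essentially the paper's, but there are two concrete gaps.

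First, running the normalization of Lemma~\ref{norm_A} ``verbatim'' does not deliver $e_k\cdot g_s=0$. In Type~$A$ that relation is automatic because the output condition there is exactly $V^{(s)}\cdot V_{(s)}=0$; in Type~$B$ you have $V_{(s)}\cdot V_{(s)}\neq 0$, so in general $e_k\cdot g_s=V_1(e_k,g_s)\,\mu_1\neq 0$. The paper fixes this with one extra step, inserted \emph{before} the $f_s$-normalizations: having already arranged $g_s^2=\mu_1$, one sets $e_k\mapsto e_k-V_1(g_s,e_k)\,g_s$, which kills $e_k\cdot g_s$ precisely because $g_s^2=\mu_1$, while preserving $F(e_k,e_l)=\delta_{k,l}$ since $g_s\in V_{(s)}$ is $F$-orthogonal to $V^{(s)}$. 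Your compatibility check (that later corrections to $g_s$ lie in $V_{(s-1)}$ and hence do not disturb $g_s^2$) is correct but addresses the wrong direction; the issue is what $g_s^2\neq 0$ does to the \emph{other} relations.

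Second, your proposed rescaling solution does not work when $\delta=1$. Fixing $y_s=1$ (hence $z_1=1$) removes one variable from a system that is already exactly determined: with variables $x_1,\dots,x_s,y_0,\dots,y_s,z_0$ and the $2s{+}2$ equations $x_iy_i=1$, $x_iy_{i-1}=z_0/c_i$, $x_1^2 d_0=y_0$, $y_s^2=z_0$, there is no freedom left to set $y_s=1$, and your ``read off $z_2$'' step then clashes with $x_1 y_0=z_0/c_1$. The paper solves the system directly by setting
\[
z_0=\Bigl(\tfrac{c^3 d_0}{c_1^2}\Bigr)^{\!2/(6s-1)},\qquad y_0=\Bigl(\tfrac{d_0 z_0^2}{c_1^2}\Bigr)^{\!1/3},\qquad y_i=y_{i-1}c_i z_0^{-1},\qquad x_i=y_i^{-1},
\]
with $c=\prod_{i=1}^s c_i$; for $\delta=0$ one may take $z_0=1$. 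Your part~(iii) via Lemma~3.9 is exactly what the paper does.
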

\begin{proof}
	
	(i) As in Case 1.(i) we can first choose $e_{k}$ satisfying (4.7). Also we can choose $f_{s} \in V^{(s-1)}\backslash V^{(s)}$, $g_{s} \in V_{(s)} \backslash V_{(s-1)}$ and  $h_{s-1} \in V_{(s-1)} \backslash V_{(s-2)}$ s.t. $f_{s} \cdot h_{s-1} =\mu_{1}$, $F(f_{s},g_{s})=1$ and $F(f_{s},f_{s})=0$. Then as $V_{(s)} \cdot V_{(s)} \not =0$, $V_{(s-1)} \cdot V_{(s)} \subseteq V_{(s-1)} \cdot V^{(s)}=0$ and $codim(V_{(s-1)},V_{(s)})=1$ we conclude that $g_{s}^2$ is a nonzero element in $\langle \mu_{1} \rangle$ and hence we can assume $g_{s}^2=\mu_{1}$. Now we normalize the multiplications between $f_{s},g_{s},e_{k}$ through the following steps:
	\begin{align*}
	e_{k} & \mapsto e_{k}-V_{1}(g_{s},e_{k}) \cdot g_{s} &&\text{to make $g_{s} \cdot e_{k}=0$}\\
	f_{s} &\mapsto  f_{s}- \sum_{k=1}^{p} F(e_{k},f_{s}) \cdot e_{k}  &&\text{to make $F(f_{s},e_{k})=0$}\\
	e_{k} & \mapsto e_{k}-V_{1}(f_{s},e_{k}) \cdot h_{s-1} && \text {to make $f_{s} \cdot e_{k}=0$}\\
	g_{s} & \mapsto g_{s}-V_{1}(g_{s},f_{s}) \cdot h_{s-1} && \text{to make $f_{s} \cdot g_{s}=b_{0}$}\\
	f_{s} &\mapsto  f_{s}-(V_{1}(f_{s},f_{s})/2) \cdot h_{s-1} &&\text{to make $f_{s}^2=
		\begin{cases*}
		0 &\text{if $s \geqslant 2$}\\
		d_{0} \cdot \mu_{2} & \text{if $s=1$}
		\end{cases*}
		$}
	\end{align*} for some $d_{0} \in \mathbb{K}$.\par
	After this note that we can still use previous inductive operations in Case 1 to find $f_{i},g_{i}$ for $i \leqslant s-1$, namely we can find suitable $f_{i},g_{i}$ satisfying (4.8) except that we have $f_{i} \cdot g_{i-1}=c_{i} \cdot \mu_{1}$ and $f_{1}^2=d_{0} \cdot \mu_{2}$ for some nonzero $c_{i} \in \mathbb{K}$ and $d_{0}=0$ if and only if $dim(\mathfrak{m}^2)=2$.  Also for the same reason as in Case 1 we can assume $\Lambda=(V_{1}(e_{k},e_{l}))$ is of the canonical form.\par
    Now to finish our  normalization we replace $f_{i}$ by $x_{i} \cdot f_{i}$, replace $g_{i}$ by $y_{i} \cdot g_{i}$ and replace $\mu_{1}$ by $z_{0} \cdot \mu_{1}$. Then it suffices to satisfy the condition $(f_{i} \cdot g_{i-1}=\mu_{1}, f_{1}^2=\delta \cdot \mu_{2}, f_{i} \cdot g_{i}=b_{0},g_{s}^2=\mu_{1} )$, which gives a system of equations for $\{x_{i},y_{j},z_{0} \in \mathbb{K}:1 \leqslant i \leqslant s, 0 \leqslant j \leqslant s \}$:
	\begin{equation*}
	x_{i} \cdot y_{i}=1, \,x_{i} \cdot y_{i-1}=z_{0} \cdot c_{i}^{-1},\, x_{1}^2 \cdot d_{0}=y_{0} \cdot \delta,\,	y_{s}^2=z_{0}
	\end{equation*}
	for which we have a solution (where $c \doteq \prod_{i=1 }^s c_{i}$ ) to be calculated inductively:  \\
	$(\delta=1)$ 
	$\begin{cases}
	x_{i}=y_{i}^{-1}\\
	y_{i}=y_{i-1} \cdot c_{i} \cdot z_{0}^{-1}\\
	z_{0}=(\frac{c^{3} \cdot d_{0}}{c_{1}^{2}})^{\frac{2}{6s-1}},
	y_{0}=(\frac{d_{0}\cdot z_{0}^{2}}{c_{1}^{2}})^{\frac{1}{3}}
	\end{cases}$
	 \,\,\,\,\,\,\, and \,\,\,\,\,\, 
	$(\delta=0)$ 
	$\begin{cases}
	x_{i}=y_{i}^{-1}\\
	y_{i}=y_{i-1} \cdot c_{i} \\
	y_{0}=c^{-1},
	z_{0}=1
	\end{cases}$\\
	concluding our normalization.\par
	(ii) If $V^{(s)}=V_{(s)}$ then the process will be the same as (i) except that we do not need to choose $e_{k}$ at the beginning.\par
	(iii) Note that $s=0$ is equivalent to $Ker(F) \cdot Ker(F) \not =0$, which is just the Case 2 in the proof of Proposition \ref{mainprosec2}. Hence the assertion follows from Lemma 3.9.
\end{proof}
We can now determine the algebraic structure of $(R,W,F)$ in Case 2.
\begin{proposition}[Classification of $Type$ $B$]\label{classify_B}
	$(R,W,F)$ can be transformed into the following:\\
	$\bullet \,Type$ $B_{0}$\label{B_0} : $s=0$ ($Ker(F) \cdot Ker(F)\not=0$)
	\begin{equation*}
	M(F,Type B_{0 })=
	\left(\begin{matrix}
	0  &0      & 0      & 0     & 0  \\      
	0  &0       & 0      & 0     & 0   \\   
	0  &0       & 1     & \dots & 0   \\  
	0  &0       & \vdots & \ddots& \vdots \\
	0  &0       & 0      & \dots & 1     
	\end{matrix}\right) , \,\, W=\langle \mu_{1},\mu_{2} \rangle \oplus \langle e_{1},...,e_{p} \rangle 
	\end{equation*}
	$R \cong \mathbb{K}[\mu_{1},\mu_{2},e_{1},...,e_{p}]/(\mu_{1} \cdot W,\mu_{2}^{2}-\mu_{1},e_{i} \cdot \mu_{2},e_{i} \cdot e_{j}-\lambda_{i,j} \cdot \mu_{1},e_{i}^{2}-e_{j}^{2}-(\lambda_{i,i}-\lambda_{j,j}) \cdot \mu_{1}, \, 1\leqslant i \not =j \leqslant p)$ where $\Lambda=(\lambda_{i,j})$ is of the canonical form (\ref{canonical_form}).\\\\
	$\bullet \,Type$ $B_{1}$\label{B_1}: $s\geqslant 1$ and $Q^{(s)}$ is a projective space (equivalently $V_{(s)}=V^{(s)}$).
	\begin{equation*}
	M(F,Type B_{1})=M(F,Type A_{1}),
	\end{equation*}
	$W=\langle \mu_{1},\mu_{2} \rangle \oplus \langle g_{1},...,g_{s} \rangle \oplus \langle f_{s},...,f_{1} \rangle$,\\
	$ R \cong \mathbb{K}[\mu_{1},\mu_{2},g_{1},..,g_{s},f_{1},..,f_{s}]/(\mu_{1}\cdot W,g_{i}\cdot \mu_{2},f_{l} \cdot \mu_{2},g_{s}^2-\mu_{1},g_{i} \cdot f_{i}-g_{v} \cdot f_{v},g_{l-1} \cdot f_{l}-\mu_{1},f_{1} \cdot \mu_{2}-\mu_{1},f_{1}^2-\delta \cdot \mu_{2},f_{h} \cdot g_{h'},f_{i} \cdot f_{l},g_{i} \cdot g_{l' }, \, 1 \leqslant i \not=v  \leqslant s$, $h-h' \not \in \{0,1\} $, $ 2 \leqslant l \leqslant s$, $1 \leqslant l' \leqslant s-1$ when $s \geqslant 2)$ where $\delta$ is the same as in Type $A_{1}$.\\\\
	$\bullet \,Type\, B_{2}$\label{B_2}: $s \geqslant 1$ and $Q^{(s)}$ is a hyperquadric (equiuvalently $V_{(s)} \not = V^{(s)}$).
	\begin{equation*}
	M(F,Type B_{2})=M(F,Type A_{2}),
	\end{equation*}
	$W=\langle \mu_{1},\mu_{2} \rangle \oplus \langle g_{1},...,g_{s} \rangle \oplus \langle e_{1},...,e_{p}\rangle  \oplus \langle f_{s},...,f_{1} \rangle$,\\
	$ R \cong \mathbb{K}[\mu_{1},\mu_{2},g_{1},...,g_{s},e_{1},...,e_{p},f_{1},...,f_{s}]/(\mu_{1}\cdot W,g_{i}\cdot \mu_{2},e_{i'} \cdot \mu_{2}, f_{l} \cdot \mu_{2}, g_{s}^2-\mu_{1},g_{i} \cdot f_{i}-e_{i'}^2+\lambda_{i'i'}\mu_{1},g_{l-1} \cdot f_{l}-\mu_{1},e_{i'} \cdot e_{i''}-\lambda_{i'i''}\mu_{1},e_{i'} \cdot f_{i},e_{i'} \cdot g_{i}, f_{1} \cdot \mu_{2}-\mu_{1},f_{1}^2-\delta \cdot \mu_{2},g_{i} \cdot g_{l'},f_{i} \cdot f_{l},f_{h} \cdot g_{h'}, \,  1 \leqslant i  \leqslant s ,1 \leqslant i' \not = i'' \leqslant p, h-h' \not \in \{0,1\} $, $2 \leqslant l \leqslant s$, $1 \leqslant l' \leqslant s-1$ when $s \geqslant 2)$\\
	 where $\Lambda= (\lambda_{i'i''})$ is of the canonical form (\ref{canonical_form}) and $\delta$ is the same as in $Type \, A_{1}$.
\end{proposition}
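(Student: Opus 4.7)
The plan is to deduce the proposition essentially mechanically from Lemma \ref{norm_B}, treating it as a translation of the multiplication table into (a) the Gram matrix of $F$ and (b) an explicit ideal of relations presenting $R$. Throughout, we exploit that by Lemma \ref{y0} we have $F(a,a')=y_{0}(aa')$ for $a,a' \in \mathfrak{m}$, together with $F(1,W)=0$ and $F(1,b_{0})=-1$, and that by Proposition \ref{mainprosec2} we may assume $b_{0} \cdot \mathfrak{m}=0$ and $\mu_{1}\cdot\mathfrak{m}=0$. These identities reduce the computation of any $F(\alpha,\beta)$ on $W$ to reading off the coefficient of $b_{0}$ in the product $\alpha\beta$ as given by (\ref{multi}).

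The three subtypes correspond exactly to the three cases of Lemma \ref{norm_B}: Type $B_{0}$ corresponds to part (iii) ($s=0$), Type $B_{1}$ to part (ii) ($s\geqslant 1$ and $V^{(s)}=V_{(s)}$), and Type $B_{2}$ to part (i) ($s\geqslant 1$ and $V^{(s)}\neq V_{(s)}$). In each case I would apply the corresponding part of Lemma \ref{norm_B} to fix the basis of $W$ displayed in the proposition, then read off $F$ by the procedure above. For Type $B_{0}$: the $\mu_{i}$ are in $Ker(F)$, so all entries involving $\mu_{1},\mu_{2}$ vanish; and $e_{k}\cdot e_{l}=\delta_{k,l}b_{0}+V_{1}(e_{k},e_{l})\mu_{1}$ yields $F(e_{k},e_{l})=\delta_{k,l}$. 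For Types $B_{1}$ and $B_{2}$: the $\mu_{i}$ still contribute zero rows/columns, while the relations of Lemma \ref{norm_B} give $F(f_{i},g_{i})=1$ (from $f_{i}\cdot g_{i}=b_{0}$), all other products among $\{g_{i},f_{j}\}$ contributing $b_{0}$-coefficient $0$; for Type $B_{2}$ the additional vectors $e_{k}$ satisfy $e_{k}\cdot e_{l}=\delta_{k,l}b_{0}+V_{1}(e_{k},e_{l})\mu_{1}$, giving the identity block in the middle; these arrangements assemble into the displayed anti-diagonal matrices $M(F,\text{Type }B_{1})$ and $M(F,\text{Type }B_{2})$.

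To obtain the algebra presentations, I would list every pairwise product of generators as a linear combination in $\langle \mu_{1},\mu_{2},b_{0}\rangle\oplus W$ using Lemma \ref{norm_B}, eliminate $b_{0}$ everywhere using the identity $b_{0}=f_{1}\cdot g_{1}$ (or $b_{0}=e_{1}^{2}$ when a suitable $e_{1}$ is available, as in Type $B_{2}$), and finally eliminate the resulting redundancies. For example, since $f_{i}\cdot g_{i}=b_{0}$ for all $i$, the relations $g_{i}\cdot f_{i}-g_{v}\cdot f_{v}$ encode the $b_{0}$-equality; the relations $g_{l-1}\cdot f_{l}-\mu_{1}$, $f_{1}\cdot \mu_{2}-\mu_{1}$, $g_{s}^{2}-\mu_{1}$, $f_{1}^{2}-\delta\mu_{2}$ encode the remaining nonzero products; and all other pairs not listed reduce to $0$ by Lemma \ref{norm_B}. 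This gives the quotient presentations displayed in each subtype. For Type $B_{2}$, one checks in addition that the coefficients $\lambda_{i'i''}$ may be assumed in the canonical form (\ref{canonical_form}) because an orthogonal change of the basis $\{e_{k}\}$ with respect to $F$ does not disturb any of the other normalizations (the $e_{k}$ are $F$-orthogonal to everything outside $\langle e_{1},\ldots,e_{p}\rangle$ and their products with those vectors vanish), so the classical normal form for symmetric bilinear forms under orthogonal transformations applies, as in \cite[Chapter XI \S 3]{matrices}.

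The main obstacle I anticipate is bookkeeping: ensuring the list of relations is complete (every pair of generators has its product either explicitly listed or derivable from the listed relations) and correctly indexed in the edge cases $s=0,1,2$, where some of the ranges collapse (e.g., the relations $g_{l-1}\cdot f_{l}-\mu_{1}$ and $g_{i}\cdot g_{l'}$ are vacuous when $s=1$). I would handle this by checking dimension: the number of independent products plus the rank of the ideal of relations should match $\dim R = n+2$. No new geometric content is needed beyond Lemma \ref{norm_B}; the proposition is a restatement in canonical-form coordinates.
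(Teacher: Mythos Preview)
Your proposal is correct and matches the paper's approach: the paper does not give a separate proof for this proposition, but states it immediately after Lemma~\ref{norm_B} as the direct translation of that lemma's multiplication table into the Gram matrix of $F$ and an explicit presentation of $R$. The only redundancy in your write-up is the separate justification that $\Lambda$ can be put in canonical form~(\ref{canonical_form}); this is already asserted as part of Lemma~\ref{norm_B}(i) and (iii), so you need not argue it again here.
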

\case $x=C$. In this case the algebraic sequence becomes
\begin{align*}
\langle \mu_{1} \rangle =V_{(-1)} \subseteq Ker(F)=V_{(0)} \subseteq...\subseteq V_{(s)}=V_{(s+1)} \subseteq V^{(s+1)} \subseteq V^{(s)}  \subseteq ... \subseteq V^{(0)}=W,
\end{align*}
with $V^{(k)} \cdot V_{(k-1)}=0$ if $k \geqslant 0 $ and $V_{(s)} \cdot V_{(s)}=0$.  We have the following normalization.
\begin{lemma}\label{norm_C}
	 (i) If $V_{(s+1)} \not =V^{(s+1)}$ and $s \geqslant 1$ then there exist $f_{i} \in V^{(i-1)} \backslash V^{(i)}$, $g_{j} \in V_{(j)}\backslash V_{(j-1)}$ for $1 \leqslant i \leqslant s+1, 1\leqslant j \leqslant s$, $g_{0} \doteq \mu_{2}$ and $\{e_{k}:1 \leqslant k \leqslant p\} \subseteq V^{(s+1)} \backslash V_{(s+1)}$ such that 
	 \begin{equation}
	 \begin{split}
	 V^{(s+1)}&=V_{(s+1)}\oplus \langle e_{1},...,e_{p} \rangle \\
	 e_{k} \cdot e_{l} &=\delta_{k,l} \cdot b_{0}+V_{1}(e_{k},e_{l}) \cdot \mu_{1}  \\
	 \end{split}
	 \end{equation}
	 and
	 \begin{equation}
	\begin{split}
	 e_{k} \cdot f_{i}&=e_{k} \cdot g_{j}=f_{j} \cdot f_{j'}=f_{v} \cdot g_{v'}=0 \\
	 f_{j} \cdot g_{j}&=f_{s+1}^2=b_{0}, \,	f_{i} \cdot g_{i-1}=\mu_{1}, \, f_{1}^2=\delta \cdot \mu_{2},
	 \end{split}
	 \end{equation}
	 for $1 \leqslant i \leqslant s+1$, $1 \leqslant k,l \leqslant p $, $v-v' \not \in \{0,1\}$, $ 1\leqslant j \leqslant s$, $ 2\leqslant j' \leqslant s+1$,
	 \[
	 \delta=
	 \begin{cases*}
	 0 &\text{if $dim(\mathfrak{m}^2)=2$;}\\
	 1 & \text{if $dim(\mathfrak{m}^3)=3$}
	 \end{cases*}
	 \] 
	 and  the matrix $\Lambda=(V_{1}(e_{k},e_{l}))$ is of the canonical form (4.6).\\
	 (ii) If $V^{(s+1)}=V_{(s+1)}$ and $s \geqslant 1$ then there exist $f_{i} \in  V^{(i-1)} \backslash V^{(i)}$, $g_{j} \in V_{(j)} \backslash V_{(j-1)}$ for $1 \leqslant i \leqslant s+1,1 \leqslant j \leqslant s$, $g_{0} =\mu_{2}$ such that
	 \begin{align*}
	 f_{j} \cdot f_{j'}=f_{v} \cdot g_{v'}=0, \,
	 f_{j} \cdot g_{j}=f_{s+1}^2=b_{0}, \,	f_{i} \cdot g_{i-1}=\mu_{1}, \, f_{1}^2=\delta \cdot \mu_{2},\notag 
	 \end{align*} 
	 for $1 \leqslant i  \leqslant s+1$, $v-v' \not \in \{0,1\}$, $ 1\leqslant j \leqslant s$, $ 2\leqslant j' \leqslant s+1$, and $\delta$ is the same as in (i).\\
	 (iii) If $s=0$ then there exist $\{e_{k}:1 \leqslant k \leqslant p+1\}$ such that $e_{p+1} \in W \backslash V^{(1)}$ and: 
	 \begin{align*}
	 V^{(1)}&=\langle \mu_{1},\mu_{2} \rangle \oplus \langle e_{1},...,e_{p} \rangle,\\
	 e_{k} \cdot e_{l} &=\delta_{k,l} \cdot b_{0}+V_{1}(e_{k},e_{l}) \cdot \mu_{1},\\
	 e_{p+1} \cdot \mu_{2}&=\mu_{1}, e_{p+1} \cdot e_{k}=0 ,e_{p+1}^{2}=b_{0}+\delta \cdot \mu_{2},	 
	 \end{align*}
	 for $1 \leqslant k,l \leqslant p$ and $\delta$ is the same as in (i).
\end{lemma}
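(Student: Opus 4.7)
Part (iii) is the base case $s=0$, where the hypothesis $V_{(s)}=V_{(s+1)}$ reduces to $Ker(F)=V_{(1)}$; this is precisely the setting of Lemma \ref{orlemma}, and applying that lemma with the relabeling $t\mapsto p$, $e_{t+1}\mapsto e_{p+1}$ yields the stated decomposition and multiplicative relations.

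For parts (i) and (ii) with $s\ge 1$, the scheme mirrors the inductive normalizations in Lemmas \ref{norm_A} and \ref{norm_B}, with one essential modification dictated by the Type $C$ condition $V_{(s)}=V_{(s+1)}$. First observe that $V_{(s)}^2=0$: since $V_{(s)}=V_{(s+1)}\subseteq V^{(s+1)}$, one has $V_{(s)}\cdot V_{(s)}\subseteq V^{(s+1)}\cdot V_{(s)}=0$ by the definition of $V^{(s+1)}$. Second, $F$ descends to a nondegenerate form on $V^{(s)}/V_{(s)}$, whose restriction to the hyperplane $V^{(s+1)}/V_{(s)}$ is again nondegenerate (since $Ker(F|_{V^{(s+1)}})=V_{(s+1)}=V_{(s)}$); by Proposition \ref{codim_1}, its orthogonal complement is one-dimensional and therefore anisotropic. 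After choosing $e_1,\dots,e_p$ as an $F$-orthonormal basis of a complement to $V_{(s+1)}$ in $V^{(s+1)}$, pick $f_{s+1}\in V^{(s)}\setminus V^{(s+1)}$, subtract $\sum_k F(f_{s+1},e_k)\cdot e_k$, and rescale so that $F(f_{s+1},f_{s+1})=1$, i.e., the $b_0$-coefficient of $f_{s+1}^2$ equals $1$.

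Next, select $g_s\in V_{(s)}\setminus V_{(s-1)}$ with $f_{s+1}\cdot g_s=\mu_1$; this is possible since $f_{s+1}\notin V^{(s+1)}$ guarantees $f_{s+1}\cdot V_{(s)}\neq 0$, while $F(f_{s+1},g_s)=0$ and $g_s\in V^{(1)}$ force the product into $\langle\mu_1\rangle$. The substitutions $e_k\mapsto e_k-V_1(e_k,f_{s+1})\cdot g_s$ and $f_{s+1}\mapsto f_{s+1}-\tfrac{1}{2}V_1(f_{s+1},f_{s+1})\cdot g_s$ then produce $f_{s+1}\cdot e_k=0$ and $f_{s+1}^2=b_0$, and do not disturb any previously established relation because $g_s^2=0$, $g_s\cdot V^{(s+1)}=0$, and $F(g_s,V^{(s)})=0$. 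The use of $g_s$ here replaces the substitution $f\mapsto f-ch_{s-1}$ employed in Lemmas \ref{norm_A} and \ref{norm_B}, which is not available since $V^{(s)}\cdot V_{(s-1)}=0$; this replacement is the central new idea of the proof.

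The remaining construction of $(f_i,g_i)$ for $1\le i\le s$ follows the inductive procedure of Lemmas \ref{norm_A} and \ref{norm_B} verbatim: at each level one orthogonalizes $f_i$ against all previously fixed elements (now including $f_{s+1}$ and the $e_k$), normalizes $f_i\cdot g_i$, $f_i\cdot e_k$, $f_i^2$ using $h_{i-1}\in V_{(i-1)}\setminus V_{(i-2)}$, and finally brings $(V_1(e_k,e_l))$ into the canonical form \eqref{canonical_form} by an orthogonal transformation on $e_1,\dots,e_p$. A closing rescaling $f_i\mapsto x_i f_i$, $g_j\mapsto y_j g_j$, together with possible scalings of $\mu_1,\mu_2$, enforces $f_i g_i=b_0$, $f_i g_{i-1}=\mu_1$, $f_{s+1}^2=b_0$, and $f_1^2=\delta\mu_2$; the resulting system in the scaling factors is solved by the same iterative construction used at the ends of the proofs of Lemmas \ref{norm_A} and \ref{norm_B}. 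The principal obstacle throughout is to verify that the $g_s$-based adjustments at the top of the chain do not interfere with the induction below, and this is exactly what the identities $g_s^2=0$, $g_s\cdot V^{(s+1)}=0$, $F(g_s,V^{(s)})=0$, together with $V^{(s)}\subseteq V^{(1)}$, guarantee.
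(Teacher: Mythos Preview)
Your proposal is correct and follows essentially the same approach as the paper: part (iii) is reduced to Lemma~\ref{orlemma}, and for parts (i)--(ii) you correctly identify that the key new ingredient over Lemmas~\ref{norm_A} and~\ref{norm_B} is the use of $g_s$ (rather than an element of $V_{(s-1)}$) to kill the $\mu_1$-components of $f_{s+1}\cdot e_k$ and $f_{s+1}^2$, since $V^{(s)}\cdot V_{(s-1)}=0$ makes the old substitution unavailable. The paper carries the constant $c_{s+1}$ from $f_{s+1}\cdot g_s=c_{s+1}\mu_1$ through to the final rescaling system rather than normalizing $g_s$ immediately as you do, but this is a cosmetic reordering; your observation that the noninterference is governed by $g_s^2=0$, $g_s\cdot V^{(s+1)}=0$, and $F(g_s,V^{(s)})=0$ is exactly what makes both versions work.
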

\begin{proof}
(i) Firstly we can choose $e_{k}$ satisfying (4.9) and from $V_{(s)}=Ker(F_{|_{V^{(s)}}})$ we can choose $f_{s+1} \in V^{(s)} \backslash V^{(s+1)}$ s.t. $F(e_{k},f_{s+1})=0$ and $F(f_{s+1},f_{s+1})=1$. Furtherly we can choose $f_{s} \in V^{(s-1)} \backslash V^{(s)}$, $g_{s} \in V_{(s)} \backslash V_{(s-1)}$ and $h_{s-1} \in V_{(s-1)} \backslash V_{(s-2)}$ s.t. $f_{s} \cdot h_{s-1}=\mu_{1}$, $F(f_{s},g_{s})=1$ and $F(f_{s},f_{s})=0$. Moreover we have $f_{s+1} \cdot g_{s}=c_{s+1} \cdot \mu_{1}$ for some nonzero $c_{s+1} \in \mathbb{K}$. Now we normalize the multiplications between $e_{k},f_{s},g_{s},f_{s+1}$ through the following steps:
\begin{align*}
e_{k} & \mapsto e_{k}-c_{s+1}^{-1} \cdot V_{1}(f_{s+1},e_{k}) \cdot g_{s} &&\text{to make $f_{s+1} \cdot e_{k}=0$}\\
f_{s+1} &\mapsto  f_{s+1}-c_{s+1}^{-1}\cdot (V_{1}(f_{s+1},f_{s+1})/2) \cdot g_{s} &&\text{to make $f_{s+1}^2=b_{0}$}\\
f_{s} & \mapsto f_{s}-\sum_{k=1}^{p} F(e_{k},f_{s}) \cdot e_{k}-F(f_{s},f_{s+1}) \cdot f_{s+1} &&\text{to make $F(f_{s},e_{k})=F(f_{s},f_{s+1})=0$}
\end{align*}
and for any $\alpha \in \{e_{k},f_{s+1}:1\leqslant k \leqslant p\}$
\begin{align*}
\alpha&\mapsto \alpha -V_{1}(\alpha,f_{s}) \cdot h_{s-1} &&\text{to make $f_{s} \cdot \alpha=0$}\\
g_{s} &\mapsto g_{s}-V_{1}(f_{s},g_{s}) \cdot h_{s-1} &&\text{to make $g_{s} \cdot f_{s}=b_{0}$} \\
f_{s} &\mapsto  f_{s}-(V_{1}(f_{s},f_{s})/2) \cdot h_{s-1} &&\text{to make $f_{s}^2=
	\begin{cases*}
	0 &\text{if $s \geqslant 2$}\\
	d_{0} \cdot \mu_{2} & \text{if $s=1$}
	\end{cases*}
	$}
\end{align*}
After this as in Case 1 and 2, we can still inductively find suitable $f_{i},g_{i}$ satisfying (4.10) except that we have $f_{i} \cdot g_{i-1}=c_{i}\cdot \mu_{1}$ and $f_{1}^{2}=d_{0} \cdot \mu_{2}$ for some nonzero $c_{i} \in \mathbb{K}$ and $d_{0}=0$ if and only if $dim(\mathfrak{m}^2)=2$. Also we can assume $\Lambda=(V_{1}(e_{k},e_{l}))$ is of the canonical form (4.6).\par
Now to finish our normalization we again replace $f_{i}$ by $x_{i} \cdot f_{i}$, replace $g_{j}$ by $y_{j} \cdot g_{j}$ and replace $\mu_{1}$ by $z_{0} \cdot \mu_{1}$. Then the condition $(f_{i} \cdot g_{i-1}=\mu_{1},f_{1}^2=\delta \cdot \mu_{2}, f_{i} \cdot g_{i}=b_{0}, f_{s+1}^{2}=b_{0})$ gives a system of equations for $\{x_{i},y_{j},z_{0}, \in \mathbb{K}:1 \leqslant i \leqslant s+1, 0 \leqslant j \leqslant s \}$:
\begin{equation*}
x_{k} \cdot y_{k}=1,  x_{k} \cdot y_{k-1}=z_{0} \cdot c_{k}^{-1} , x_{s+1}^{2}=1, d_{0} \cdot x_{1}^{2}=\delta \cdot y_{0},x_{s+1} \cdot y_{s}=z_{0} \cdot c_{s+1}^{-1}
\end{equation*}
where $1 \leqslant k \leqslant s$, and for which we have a solution:\\
$(\delta=1)$ 
$\begin{cases}
x_{k}=y_{k}^{-1}\\
y_{k}=y_{k-1} \cdot c_{k} \cdot z_{0}^{-1}\\
z_{0}=(c^3 \cdot c_{1}^{-2} \cdot d_{0})^{\frac{1}{3s+4}}\\
y_{0}=(\frac{z_{0}^{2} \cdot d_{0}}{c_{1}^{2}})^{\frac{1}{3}}\\
x_{s+1}=1
\end{cases}$
\,\,\,\,\,\,\, and \,\,\,\,\,\, 
$(\delta=0)$ 
$\begin{cases}
x_{k}=y_{k}^{-1}\\
y_{k}=y_{k-1} \cdot c_{k} \cdot z_{0}^{-1} \\
z_{0}=1, 
y_{0}=c^{-1},
x_{s+1}=1
\end{cases}$\\
where $c \doteq \prod_{i=0}^{s+1} c_{i}$, concluding the normalization.\par
(ii) If $V_{(s+1)}=V^{(s+1)}$ then similarly the process will be the same as (i) except that we do not need to choose $e_{k}$ at the beginning.\par
(iii) If $s=0$ then the assertion follows from Lemma 3.6.
\end{proof}
\begin{proposition}[Classification of $Type$ $C$]\label{classify_C} $(R,W,F)$ can be transformed into the following:\\
$\bullet \, Type$ $C_{0}$\label{C_0}: $s=0$ (equivalently $V_{(1)}=Ker(F)$)\\	
\begin{equation*}
M(F,Type C_{0})=
\left(\begin{matrix}
0  &0      & 0      & 0     & 0  \\      
0  &0       & 0      & 0     & 0   \\   
0  &0       & 1     & \dots & 0   \\  
0  &0       & \vdots & \ddots& \vdots \\
0  &0       & 0      & \dots & 1     
\end{matrix}\right) , \,\, W=\langle \mu_{1},\mu_{2} \rangle \oplus \langle e_{1},...,e_{p},e_{p+1} \rangle 
\end{equation*}
$R \cong \mathbb{K}[\mu_{1},\mu_{2},e_{1},...,e_{p},e_{p+1}]/(\mu_{1} \cdot W,\mu_{2} \cdot e_{i},e_{i} \cdot e_{j}-\lambda_{i,j} \cdot \mu_{1},e_{i}^{2}-e_{j}^{2}-(\lambda_{i,i}-\lambda_{j,j}) \cdot \mu_{1},e_{p+1} \cdot e_{i},e_{p+1}^{2}-\delta \cdot \mu_{2}-e_{i}^{2}+\lambda_{i,i}\cdot \mu_{1}, \, 1\leqslant i \not = j \leqslant p)$\\
 where $\Lambda= (\lambda_{i,i'})$ is of the canonical form (\ref{canonical_form}) and $\delta$ is the same as in Type $A_{1}$.\\\\
$\bullet \, Type$ $C_{1}$\label{C_1}: $Q^{(s+1)}$ is a projective space (equivalently $V_{(s+1)}=V^{(s+1)}$)
\begin{equation*}
M(F,Type C_{1})=
\left(\begin{matrix}
0 & 0 & 0      & 0     & 0   &0    & 0      & 0      & 0 \\
0 & 0 & 0      & 0     & 0   &0    & 0      & 0      & 0 \\
0 & 0 &0       & \dots  & 0  &0    &0     & \dots  & 1 \\
0 & 0 & \vdots & \dots  & 0  &0    & \vdots & \reflectbox{$\ddots$} & \vdots \\
0 & 0 & 0      & 0      & 0  &0 &1       &0       &0\\
0 & 0 &     0  & \dots  & 0  &1    & 0      & \dots & 0   \\
0 & 0 & 0      & \dots & 1  &0    & 0      & \dots & 0\\
0 & 0 & \vdots & \reflectbox{$\ddots$}& \vdots &0  & 0      & \dots & 0\\
0 & 0 & 1      & \dots & 0   &0   & 0      & \dots & 0
\end{matrix}\right) 
\end{equation*}
$W=\langle \mu_{1},\mu_{2} \rangle \oplus \langle g_{1},...,g_{s} \rangle \oplus \langle f_{s+1},f_{s},...,f_{1} \rangle$\\
$R \cong \mathbb{K}[\mu_{1},\mu_{2},g_{1},...,g_{s},f_{1},...,f_{s},f_{s+1}]/(\mu_{1}\cdot W,g_{i}\cdot \mu_{2},f_{l} \cdot \mu_{2},g_{i} \cdot g_{v},g_{i} \cdot f_{i}-g_{v} \cdot f_{v},g_{l-1} \cdot f_{l}-\mu_{1},f_{l} \cdot f_{l'},f_{h} \cdot g_{h'},f_{1} \cdot \mu_{2}-\mu_{1},f_{1}^2-\delta \cdot \mu_{2},f_{s+1}^2-f_{i} \cdot g_{i}, \, 1 \leqslant i,v \leqslant s  ,2 \leqslant l \leqslant s+1, 1 \leqslant l' \leqslant s, h-h' \not \in \{0,1\} )$  where $\delta$ is the same as in $Type \, C_{0}$.\\\\
$\bullet \,Type\, C_{2}$\label{C_2}: $Q^{(s+1)}$ is a hyperquadric (equivalently $V_{(s+1)} \not = V^{(s+1)}$).
\begin{equation*}
M(F,Type C_{2})=
\left(\begin{matrix}
0  &0 & 0      & 0     & 0      & 0      & 0     & 0       &0     & 0      & 0     & 0 \\	
0  &0 & 0      & 0     & 0      & 0      & 0     & 0       &0     & 0      & 0     & 0 \\
0  &0 & 0      & \dots & 0      & 0      & \dots & 0       &0     & 0     & \dots & 1 \\
\vdots &\vdots & \vdots & \dots & \vdots & \vdots & \dots & \vdots  &\vdots & \vdots &\reflectbox{$\ddots$}& \vdots \\
0  &0 & 0      & \dots & 0      & 0      & \dots & 0       &0     & 1      & \dots & 0      \\
0 &0 & 0      & \dots & 0      & 1     & \dots & 0       &0     & 0      & \dots & 0 	 \\
\vdots  &\vdots & \vdots & \dots & \vdots & \vdots & \ddots& \vdots  &\vdots& \vdots & \dots & \vdots\\
0  &0 & 0      & \dots & 0      & 0      & \dots & 1      &0     & 0      & \dots & 0\\
0  &0 & 0      & 0     & 0      & 0      & 0     &0        &1    &0       &0      &0 \\
0  &0 & 0     & \dots & 1      & 0      & \dots & 0       &0     & 0      & \dots & 0\\
\vdots  &\vdots & \vdots &\reflectbox{$\ddots$}& \vdots & \vdots & \dots & \vdots  &\vdots& \vdots & \dots & \vdots\\
0  &0 & 1      & \dots & 0     & 0      & \dots & 0       &0     & 0      & \dots & 0
\end{matrix}\right) 
\end{equation*}
$W=\langle \mu_{1},\mu_{2} \rangle \oplus \langle g_{1},...,g_{s} \rangle \oplus \langle e_{1},...,e_{p}\rangle  \oplus \langle f_{s+1},f_{s},...,f_{1} \rangle$\\
$R \cong \mathbb{K}[\mu_{1},\mu_{2},g_{1},...,g_{s},e_{1},...,e_{p},f_{1},...,f_{s},f_{s+1}]/(\mu_{1}\cdot W,g_{i}\cdot \mu_{2},e_{i'} \cdot \mu_{2},f_{l} \cdot \mu_{2},g_{i} \cdot g_{v},f_{h} \cdot g_{h'},f_{l} \cdot f_{l'},g_{i} \cdot f_{i}-e_{i'}^2+\lambda_{i'i'}\mu_{1},g_{l-1} \cdot f_{l}-\mu_{1},f_{1} \cdot \mu_{2}-\mu_{1},e_{i'} \cdot e_{i''}-\lambda_{i'i''}\mu_{1},e_{i'} \cdot f_{l'},e_{i'} \cdot f_{s+1},f_{1}^2-\delta \cdot \mu_{2},f_{s+1}^2-f_{i}\cdot g_{i} , \, 1 \leqslant i,v \leqslant s , 2 \leqslant l \leqslant s+1,1 \leqslant i' \not = i'' \leqslant p,1 \leqslant l' \leqslant s, h-h' \not \in \{0,1\})$\\  
where $\Lambda= (\lambda_{i'i''})$ is of the canonical form (\ref{canonical_form}) and $\delta$ is the same as in $Type \, C_{0}$.
\end{proposition}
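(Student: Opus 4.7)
The plan is to reduce the statement directly to Lemma \ref{norm_C}, which already provides the normalized multiplication tables in each of the three mutually exclusive regimes $s=0$, $s\geqslant 1$ with $V_{(s+1)}=V^{(s+1)}$, and $s\geqslant 1$ with $V_{(s+1)}\neq V^{(s+1)}$. These three regimes match exactly Type $C_0$, Type $C_1$ and Type $C_2$, so the proof is essentially a translation step: I extract from the normalized products the Gram matrix of $F$ and the list of defining relations for $R$.

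First I fix the ordered basis of $W$ dictated by the statement (for Type $C_2$, that is $\mu_1,\mu_2,g_1,\ldots,g_s,e_1,\ldots,e_p,f_{s+1},f_s,\ldots,f_1$, and analogously for the other two subtypes). Using Lemma \ref{y0}, the matrix entries of $F$ are computed from $F(a,a')=y_0(aa')$ for $a,a'\in\mathfrak{m}$, together with $F(1,b_0)=-1$ and $F(1,W)=0$. Since $\mu_1,\mu_2\in\mathrm{Ker}(F)$ and $y_0$ picks out the $b_0$-component, I only need to scan Lemma \ref{norm_C}(i)--(iii) for products landing in $b_0$. In Type $C_2$ these are $f_j\cdot g_j=b_0$ for $1\leqslant j\leqslant s$, $f_{s+1}^2=b_0$, $e_k\cdot e_l=\delta_{k,l}b_0\bmod\mu_1$, giving exactly the block structure of $M(F,\mathrm{Type}\,C_2)$: an antidiagonal pairing between the $g_j$'s and $f_j$'s, the singleton $1$ for $f_{s+1}^2$, and the $p\times p$ identity block on the $e_k$'s. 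The matrices for Type $C_0$ and Type $C_1$ come out of the same bookkeeping, noting that in Type $C_0$ the single outlier $e_{p+1}$ plays the role of $f_{s+1}$ and in Type $C_1$ there is no $e_k$-block.

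Next I verify the ring presentation. Since $W$ generates $\mathfrak{m}$ and $\mathfrak{m}^2\subseteq\langle b_0,\mu_1,\mu_2\rangle$ with $b_0\cdot\mathfrak{m}=\mu_1\cdot\mathfrak{m}=0$ by Proposition \ref{mainprosec2}, $R$ is completely determined by the products of the chosen generators of $W$. I therefore write one relation for every pair of generators: trivial-product relations (e.g.\ $g_i\cdot g_v=0$, $f_l\cdot f_{l'}=0$, $g_i\cdot\mu_2=0$, $e_{i'}\cdot\mu_2=0$, etc.), the pairings $g_{l-1}\cdot f_l-\mu_1$ and $f_j\cdot g_j=f_{s+1}^2$ expressing the common value $b_0$, the Gram-matrix relations $e_{i'}\cdot e_{i''}-\lambda_{i'i''}\mu_1$ for the $e_k$-block (with $\Lambda$ in the canonical form (\ref{canonical_form}) obtained from \cite[Chapter XI, \S 3]{matrices}), and the special relation $f_1^2-\delta\mu_2$ encoding whether $\dim(\mathfrak{m}^2)=2$ or $3$. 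Dimension count then shows the ideal of relations is complete: the quotient has the correct dimension $n+2=|W|+2$, and surjectivity of the evaluation map from the free commutative algebra is built in.

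The main obstacle, which I already side-step by invoking Lemma \ref{norm_C}, is the simultaneous normalization of the cross-level products $f_i\cdot g_{i-1}$ and of $f_1^2$: after scaling $f_i\mapsto x_i f_i$, $g_j\mapsto y_j g_j$, $\mu_1\mapsto z_0\mu_1$, one must solve the coupled nonlinear system recorded in Lemma \ref{norm_C}, and the delicate point is that the case split $\delta\in\{0,1\}$ (equivalently $\dim(\mathfrak{m}^2)=2$ versus $3$) must be kept consistent with the extra constraint $f_{s+1}^2=b_0$, which introduces the additional equation $x_{s+1}^2=1$ and the new unknown $z_0$. Once this system has been solved (as done in the proof of Lemma \ref{norm_C}), the present proposition follows with no further work; the remaining content is verifying termwise that the listed generators and relations indeed recover the asserted matrix $M(F)$ and the asserted algebra $R$.
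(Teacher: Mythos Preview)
Your proposal is correct and follows the paper's own approach: the paper gives no separate proof for Proposition~\ref{classify_C}, treating it as a direct transcription of the normalized multiplication tables established in Lemma~\ref{norm_C} (just as Propositions~\ref{classify_A} and~\ref{classify_B} are read off from Lemmas~\ref{norm_A} and~\ref{norm_B}). Your write-up simply makes explicit the translation step---reading off $M(F)$ via $F(a,a')=y_0(aa')$ and assembling the defining ideal from the product list---that the paper leaves implicit.
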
 
\end{mycases}
We now give a characterization of $dim(\mathfrak{m}^{2})$.
\begin{proposition}
	 Given an additive action on a hyperquadric $Q$ of corank two with unfixed singularities and $dim(Q) \geqslant 5$,  we represent it by $(R,W,F)$ with $\mathfrak{m}$ the maximal ideal, then $dim(\mathfrak{m}^{2})=l(\mathbb{G}_{a}^{n},Q)$.	 
\end{proposition}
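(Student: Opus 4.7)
The plan is to prove $\dim(\mathfrak{m}^{2}) = l(\mathbb{G}_{a}^{n}, Q)$ by establishing the two inequalities separately, using the identification $\mathfrak{g}(\mathbb{G}_{a}^{n}) \cong W$ from Remark 2.2 and the explicit normal forms in Propositions \ref{classify_A}, \ref{classify_B} and \ref{classify_C}.

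For the upper bound $l(\mathbb{G}_{a}^{n}, Q) \leq \dim \mathfrak{m}^{2}$, I would argue as follows. By Remark \ref{action_algebrai}, the tangent space at $[r] \in \mathbb{P}(R)$ of its orbit under $\mathbb{G}_{a}^{n}$ equals $(W \cdot r + \mathbb{K} r)/\mathbb{K} r$. A boundary point corresponds to some $r \in \mathfrak{m}$ (otherwise $r$ would be a unit and $[r]$ would lie in the open orbit $\mathbb{G}_{a}^{n} \cdot [1_{R}]$), for which $W \cdot r \subseteq \mathfrak{m}^{2}$ since $W \subseteq \mathfrak{m}$. Hence the projective orbit dimension is at most $\dim(\mathfrak{m}^{2} + \mathbb{K} r) - 1 \leq \dim \mathfrak{m}^{2}$.

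For the lower bound $l(\mathbb{G}_{a}^{n}, Q) \geq \dim \mathfrak{m}^{2}$, I would first read off from Lemma \ref{basic_multi} together with the classification in Propositions \ref{classify_A}, \ref{classify_B}, \ref{classify_C} that $\mathfrak{m}^{2} \subseteq \langle \mu_{1}, \mu_{2}, b_{0}\rangle$, that $\mu_{1}, b_{0} \in \mathfrak{m}^{2}$ always, and that $\mu_{2} \in \mathfrak{m}^{2}$ if and only if $\delta = 1$; in particular $\dim \mathfrak{m}^{2} = 2 + \delta \in \{2, 3\}$. Then I would exhibit, in each of the eight types, a concrete $r \in \mathfrak{m}\setminus \mathfrak{m}^{2}$ with $W \cdot r = \mathfrak{m}^{2}$, which forces the orbit of $[r]$ to have dimension $\dim \mathfrak{m}^{2}$. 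For Types $A_{1}, A_{2}, B_{1}, B_{2}, C_{1}, C_{2}$ the element $r = f_{1}$ from the normalization works uniformly: the relations $\mu_{2} \cdot f_{1} = \mu_{1}$, $g_{1} \cdot f_{1} = b_{0}$ and $f_{1}^{2} = \delta \mu_{2}$ give $W \cdot f_{1} = \langle \mu_{1}, b_{0}, \delta \mu_{2}\rangle = \mathfrak{m}^{2}$, while all other $W$-multiples of $f_{1}$ vanish by the listed relations. For Type $B_{0}$ (where necessarily $\delta = 0$), the element $r = \mu_{2} + e_{1}$ yields $\mu_{2} \cdot r = \mu_{2}^{2} = \mu_{1}$ and $e_{1} \cdot r = e_{1}^{2} = b_{0} + V_{1}(e_{1}, e_{1}) \mu_{1}$, hence $W \cdot r = \langle \mu_{1}, b_{0}\rangle = \mathfrak{m}^{2}$. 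For Type $C_{0}$ with $\delta = 0$ the choice $r = e_{p+1}$ works directly, while with $\delta = 1$ I would take $r = e_{1} + e_{p+1}$, exploiting $\mu_{2} \cdot r = \mu_{1}$, $e_{1} \cdot r = e_{1}^{2}$ and $e_{p+1} \cdot r = e_{p+1}^{2} = b_{0} + \mu_{2}$ to span the full $\mathfrak{m}^{2}$.

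The main obstacle is the case-by-case verification of the lower bound; the upper bound is essentially formal. In particular, Type $C_{0}$ with $\delta = 1$ is the subtlest, since the naive candidate $e_{p+1}$ only produces $\langle \mu_{1}, b_{0} + \mu_{2}\rangle$ and a linear combination is required to separate $b_{0}$ from $\mu_{2}$. Once a suitable $r$ is chosen in each type, the computation of $W \cdot r$ is a direct application of the normalized multiplication table, and the fact that $r \notin \mathfrak{m}^{2}$ (visible since $r$ is a sum of basis elements of $W$ that are linearly independent from $\{\mu_{1}, \mu_{2}, b_{0}\}$) closes the argument.
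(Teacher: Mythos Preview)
Your overall strategy matches the paper's: bound $l(\mathbb{G}_a^n,Q)$ above by $\dim\mathfrak{m}^2$ using $W\cdot r\subseteq\mathfrak{m}^2$ for $r\in\mathfrak{m}$, then exhibit a boundary point whose orbit attains this dimension. The upper bound and the choice $r=f_1$ for Types $A_1,A_2,B_1,B_2,C_1,C_2$ are fine (note $F(f_1,f_1)=y_0(f_1^2)=y_0(\delta\mu_2)=0$, so $[f_1]\in Q$).

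The gap is in Types $B_0$ and $C_0$: you never verify that your candidate $[r]$ lies on $Q$, and in fact it does not. In Type $B_0$ your choice $r=\mu_2+e_1$ gives $F(r,r)=F(e_1,e_1)=1\neq 0$; in Type $C_0$ both $r=e_{p+1}$ and $r=e_1+e_{p+1}$ have $F(r,r)=1$ or $2$. Since $l(\mathbb{G}_a^n,Q)$ is the maximal orbit dimension in $Q\setminus O$, not in $\mathbb{P}(\mathfrak{m})$, these elements are useless for the lower bound. The paper handles this by taking isotropic combinations: $r=e_1+i e_2+\mu_2$ for Type $B_0$ and $r=e_{p+1}+i e_1$ for Type $C_0$ (with $i^2=-1$), which satisfy $F(r,r)=0$ while still producing $W\cdot r=\mathfrak{m}^2$. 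The hypothesis $\dim Q\geq 5$ guarantees there are enough $e_k$'s to form such combinations. Your argument is easily repaired along these lines, but as written the lower bound fails in exactly the two cases you flagged as subtle.
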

\begin{proof}
 Note that for any $\alpha \in R$, $dim(\mathbb{G}_{a}^{n} \cdot [\alpha])=dim(\mathfrak{g}(\mathbb{G}_{a}^{n})  \cdot \alpha)=dim( \alpha \cdot W )$. Thus the boundary $Q \backslash O=Q \cap \mathbb{P}(\mathfrak{m}) $ as $dim(\mathbb{G}_{a}^{n} \cdot [x])=dim(\mathbb{G}_{a}^{n} \cdot [1_{R}])=dim(O)$ for any invertible element $x$ in $R$. Then for any $x=[\alpha]$ in the boundary, as $\alpha \in \mathfrak{m}$, we have $dim(\mathbb{G}_{a}^{n} \cdot [\alpha])=dim( \alpha \cdot W )) \leqslant dim(\mathfrak{m}^{2})$, concluding that $l(\mathbb{G}_{a}^{n},Q) \leqslant dim(\mathfrak{m}^{2})$. On the other hand, by our normalization results of each type, we can find a suitable element in the boundary whose orbit has dimension $d=dim(\mathfrak{m}^{2})$ as follows.\par
 If the action is not of Type $B_{0}$ or $C_{0}$. By Lemma 4.2, Lemma 4.4 and Lemma 4.6, we have $[f_{1}] \in Q \backslash O$ and $dim(f_{1} \cdot W)=dim(\mathfrak{m}^{2})$.\par 
 If the action is of Type $B_{0}$. By Lemma 4.4, we have $[e_{1}+i \cdot e_{2}+ \mu_{2}] \in Q \backslash O$ and $dim((e_{1}+i \cdot e_{2}+ \mu_{2}) \cdot W)=dim(\mathfrak{m}^{2})$, where $i^{2}=-1$.\par
 If the action is of Type $C_{0}$. By Lemma 4.6 (iii),  we have $[e_{p+1}+i \cdot e_{1}] \in Q \backslash O $ and $dim((e_{p+1}+i \cdot e_{1}) \cdot W)=dim(\mathfrak{m}^{2})$, where $i^{2}=-1$.
\end{proof}
In the following for a normalized structure of each type we call the normalized basis of $W$, i.e., $\{f_{i},g_{j},e_{k},\mu_{1},\mu_{2}\}$ a set of normalized elements and we call the matrix $\Lambda=(\lambda_{i,j})$ (if exists) to be the canonical matrix of the action.
\subsubsection{Uniqueness}
In this section we finish our classification by showing that the normalized structrue is determined by $l(G_{a}^{n},Q)$ and the canonical matrix(if exists) up to certain elementary transformations.\par
 Given two additive actions on hyperquadrics of corank two with unfixed singularities $(\mathbb{G}_{a}^{n},Q)$ and $(\mathbb{G}_{a}^{n},Q')$ for $n \geqslant 5$, we represent them by $(R,W,F)$ and $(R',W',F')$ respectively, and represent their final outputs in the algebraic version of flow chart by $(x,s,V^{(s)},V_{(s)})$ and $(x',s',V'^{(s')},V'_{(s')})$ respectively. Furthermore define $\{(V^{(k)},V_{(k)}): k \leqslant s\}$ and $\{(V'^{(k)},V_{(k)}'): k \leqslant s'\}$ to be the algebraic sturcture sequences of the two actions. Then we have the following.
\begin{theorem}
(i)	If the two actions are equivalent, i.e., there exist
	\begin{equation*}
	\Gamma:R \mapsto R'
	\end{equation*}
	such that $\Gamma$ is a local $\mathbb{K}$-algebra isomorphism and $\Gamma(W)=W'$. Then\par
	 (i.a) $(R,W,F)$ and $(R',W',F')$ are of the same normalized type with $s=s' $  and $l(\mathbb{G}_{a}^{n},Q)=l(\mathbb{G}_{a}^{n},Q')$.\par
	 (i.b) if they are of Type $A_{1}$, $B_{1}$ or $C_{1}$, then they have the same normalized structure.\par
	 (i.c) if they are not of Type $A_{1}$, $B_{1}$ or $C_{1}$, then their canonical matrices $\Lambda$ and $\Lambda'$ differ up to a permutation of blocks, a scalar multiplication, and adding a scalar matrix (which we call elementary transformations).\\
	Conversely\par
	 (ii) if the two actions are of the same type with $l(\mathbb{G}_{a}^{n},Q)=l(\mathbb{G}_{a}^{n},Q'),s=s'$ and  when they are not of Type $A_{1}$, $B_{1}$ or $C_{1}$, suppose that their canonical matrices differ up to above elementary transformations. Then the two actions are equivalent.
\end{theorem}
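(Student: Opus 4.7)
The plan is to show that any algebra isomorphism $\Gamma\colon R\to R'$ with $\Gamma(W)=W'$ automatically preserves every piece of intrinsic data entering the flow chart, and then to identify the residual ambiguity in the normalization of Section~4.1 with the elementary transformations of the canonical matrix. For part (i.a), I first note that $\Gamma$ sends $\mathfrak{m}$ to $\mathfrak{m}'$ (the unique maximal ideals), hence $\mathfrak{m}^{2}$ to $(\mathfrak{m}')^{2}$; combined with the proposition $l(\mathbb{G}_{a}^{n},Q)=\dim\mathfrak{m}^{2}$ proved just above this gives the equality of $l$-values. Lemma~\ref{F_det_by_R_W} ensures $F$ is determined by $(R,W)$ up to scalar, so after rescaling $F'$ we may assume $F'(\Gamma\alpha,\Gamma\beta)=F(\alpha,\beta)$, whence $\Gamma(\mathrm{Ker}(F))=\mathrm{Ker}(F')$. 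A short induction on $k$, using the recursive definitions of $V^{(k+1)}$ and $V_{(k+1)}$, then yields $\Gamma(V^{(k)})=V'^{(k)}$ and $\Gamma(V_{(k)})=V'_{(k)}$ for every $k$. Since the decisions at each branch of the algebraic flow chart depend solely on inclusions among these spaces, the outputs $(x,s)$ coincide.

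For (i.b) and (i.c) I will compare the two normalized structures directly. In Types $A_{2}, B_{2}, C_{2}$ (and analogously for $B_{0}, C_{0}$) the canonical matrix $\Lambda$ records $V_{1}(e_{k},e_{l})$ for an $F$-orthonormal basis of a complement of $V_{(s)}$ in $V^{(s)}$. The map induced by $\Gamma$ between such complements is, modulo lower filtration pieces, orthogonal with respect to $F$, and expanding $\Gamma(e_{i})\cdot\Gamma(e_{j})=\Gamma(e_{i}\cdot e_{j})$ through the normalized multiplication tables of Propositions~\ref{classify_A}, \ref{classify_B}, \ref{classify_C} shows that $\Lambda'=A^{\intercal}(c\Lambda+hI)A$ for some orthogonal $A$, nonzero scalar $c$, and scalar $h$; the canonical block decomposition (\ref{canonical_form}) then reduces this to a permutation of blocks, giving (i.c). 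In Types $A_{1}, B_{1}, C_{1}$ the $e_{k}$'s are absent and the normalized structure has no free parameters once $(x,s,l)$ is fixed, proving (i.b).

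The converse (ii) is obtained by running the normalization in reverse: starting from two normalized structures with matching invariants, I define $\Gamma\colon R\to R'$ on the normalized basis of $W$ using the orthogonal $A$, scalar $c$, and shift $h$ relating $\Lambda$ and $\Lambda'$, extend to $b_{0},\mu_{1},\mu_{2},1_{R}$ accordingly, and then to all of $R$ via the multiplication tables in Propositions~\ref{classify_A}--\ref{classify_C}. Well-definedness reduces to checking that each defining relation listed in those propositions is mapped to a relation of $R'$, which is immediate from $\Lambda'=A^{\intercal}(c\Lambda+hI)A$ and the explicit shape of the normalized products.

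The hard part is the step in (i.c) deducing $\Lambda'=A^{\intercal}(c\Lambda+hI)A$. The rescaling freedoms $x_{i},y_{j},z_{0}$ used at the end of the proofs of Lemmas~\ref{norm_A}--\ref{norm_C} entangle the choice of $\mu_{1}$, $b_{0}$, $g_{j}$ and $f_{i}$ with the coefficient $\delta$ of $\mu_{2}$ in $f_{1}^{2}$, so I expect the delicate point to be verifying that, once $\delta$ is pinned down by $\dim\mathfrak{m}^{2}$, the only surviving transformations of $\Lambda$ are exactly $\Lambda\mapsto A^{\intercal}(c\Lambda+hI)A$, so that the canonical block-diagonal form cited in Proposition~\ref{classify_A} indeed classifies the equivalence classes without further identifications.
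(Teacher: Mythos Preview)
Your proposal is essentially correct and follows the same architecture as the paper's proof: part (i.a) via $\Gamma$ preserving $\mathfrak m$, $\mathfrak m^2$, $\mathrm{Ker}(F)$ and then inductively all $(V^{(k)},V_{(k)})$; part (i.b) by the absence of free parameters; part (i.c) by expanding $\Gamma(e_k)\Gamma(e_{k'})=\Gamma(e_ke_{k'})$; and part (ii) by exhibiting the isomorphism.

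Two small points of divergence are worth flagging. First, in the (i.c) computation the paper does not simply work ``modulo lower filtration pieces'': writing $\Gamma(e_k)=\sum_l a_{k,l}e_l'+\sum_i b_{k,i}g_i'+\cdots$, the $g_s'$-component can in principle contribute to the $\mu_1'$-coefficient of $\Gamma(e_k)\Gamma(e_{k'})$ in Type~$B_2$ because $(g_s')^2=\mu_1'$. The paper handles this by a separate short claim ($b_{k,s}=0$, checked via $\Gamma(e_kg_s)=0$), which you should not skip. Second, for (ii) the paper does \emph{not} build $\Gamma$ directly on generators and verify relations; instead it treats the three elementary transformations one at a time and, for each, produces a \emph{new} normalized basis of the \emph{same} $(R,W)$ whose canonical matrix is $\Lambda'$ (e.g.\ for the scalar shift it takes $b_0^{(0)}=b_0-h\mu_1$ and reruns the normalization; for the scalar multiple it solves small type-dependent systems in $x_i,y_j,z_0$). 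Your direct approach is equally valid but requires checking all the relations in Propositions~\ref{classify_A}--\ref{classify_C} simultaneously, whereas the paper's approach offloads that verification to the already-proved normalization lemmas. Finally, your last paragraph slightly mislocates the difficulty: the rescaling freedoms $x_i,y_j,z_0$ enter the \emph{converse} (ii), not the forward direction (i.c); the (i.c) computation itself is a clean two-sided expansion once Lemma~4.10 (the orthogonality $A^\intercal A=c_\Gamma I_p$ and the vanishing $\lambda^{(2)}=f_{1,2}=0$) is in hand.
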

  We first prove (i.a) and (i.b).
\begin{proof}[Proof of Theorem 4.9 (i.a) and (i.b)]
(i.a) Firstly as $\Gamma$ is an isomorphism, we conclude that $l(\mathbb{G}_{a}^{n},Q)=l(\mathbb{G}_{a}^{n},Q')$ by Proposition 4.8. By $\Gamma(W)=W'$ and Lemma \ref{F_det_by_R_W} we have $F(a,b)=c \cdot F'(\Gamma(a),\Gamma(b))$ for some nonzero $c \in \mathbb{K}$, for any $a,b \in R$. Then from the algebraic version of the flow chart and our definition of $(V^{(k)},V_{(k)})$ for each $k$, we conclude that $s=s'$, $\Gamma(V_{(k)})=V_{(k)}'$ and $\Gamma(V^{(k)})=V'^{(k)}$ for each $k$, implying that the two actions are of the same normalized type shown in Section 4.1.2.  \par 
(i.b) Note that the set of normalized elements of these types does not contain $e_{k}$ hence the structure only depends on $s$ and $l(\mathbb{G}_{a}^{n},Q)$ by our normalization result, concluding the proof.
\end{proof}
To prove (i.c) and (ii),  we separate it into two cases.\par
\begin{mycases}
\case	  If $s \geqslant 1$,  let $\{\mu_{1},\mu_{2},e_{k},g_{i},f_{1},b_{0}:1\leqslant k \leqslant p,1 \leqslant i \leqslant s\}$ and $\{\mu_{1}',\mu_{2}',e_{k}',g_{i}',f_{1}',b_{0}':1\leqslant k \leqslant p,1 \leqslant i \leqslant s\}$ be the associated elements in the normalized structures respectively. Then the isomorphism $\Gamma$ gives :
\begin{align}
\Gamma(b_{0})&=c_{\Gamma}\cdot b_{0}'+f_{W'},\\
\Gamma(f_{1})&=c_{1} \cdot f_{1}'+f_{1,W'},\\
\Gamma(\mu_{v})&=f_{v,1} \cdot \mu_{1}'+f_{v,2} \cdot \mu_{2}',\\
\Gamma(e_{k})&= \sum_{l=1}^{p}a_{k,l} \cdot e_{l}'+\sum_{i=1}^{s}b_{k,i} \cdot g_{i}' +c_{k,1}\cdot \mu_{1}'+c_{k,2} \cdot \mu_{2}' ,
\end{align}
where $f_{W'} \in W'$, $f_{1,W'} \in V'^{(1)}$ and $v \in\{1,2\}$, $1\leqslant  k \leqslant p$,  $c_{\Gamma},c_{1} \not =0 \in \mathbb{K}$. Moreover we define $A=(a_{k,l})$ then we have the following.
\begin{lemma}
	(i) $F'(\Gamma(a),\Gamma(b))=c_{\Gamma} \cdot F(a,b).$ \par
	(ii) $f_{W'}=\lambda^{(1)}\cdot \mu_{1}'+\lambda^{(2)} \cdot \mu_{2}' \in \langle\mu_{1}',\mu_{2}' \rangle$, $\lambda^{(2)}=f_{1,2}=0$ and $f_{1,1},f_{2,2} \not=0$. \par
	(iii) $A' \cdot A=c_{\Gamma} \cdot I_{p} $.
\end{lemma}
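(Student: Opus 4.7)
The plan is to use three ingredients repeatedly: the uniqueness of the invariant bilinear form (Lemma \ref{F_det_by_R_W}), the fact that $\Gamma$ preserves every algebraically defined subspace (in particular $\mathfrak{m}$, $\mathfrak{m}^2$, $Ker(F)$, and the chain $V^{(k)}, V_{(k)}$), and the specific multiplicative normalizations established in Lemmas \ref{norm_A}, \ref{norm_B}, \ref{norm_C}.

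For part (i), I would note that the pullback form $(a',b') \mapsto F(\Gamma^{-1}(a'),\Gamma^{-1}(b'))$ is an invariant bilinear form on $(R',W')$ because $\Gamma$ is an algebra isomorphism with $\Gamma(W)=W'$. By Lemma \ref{F_det_by_R_W} it must equal some scalar times $F'$. To identify that scalar as $c_{\Gamma}$, evaluate both sides at $(1_{R'}, b_0')$: using $\Gamma(1)=1$, $\Gamma(b_0)=c_{\Gamma}b_0'+f_{W'}$ with $f_{W'}\in W'$, and Lemma \ref{y0} (which gives $F'(1,b_0')=-1$ and $F'(1,W')=0$), the normalization drops out immediately.

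For part (ii), the key is to push $b_0$ through structural constraints. Since $b_0\in\mathfrak{m}^2$ and $\Gamma$ preserves $\mathfrak{m}^2$, Lemma \ref{basic_multi} applied to $(R',W',F')$ gives $\Gamma(b_0) \in (\mathfrak{m}')^2 \subseteq Ker(F') \oplus \langle b_0'\rangle$, forcing $f_{W'} \in Ker(F')=\langle\mu_1',\mu_2'\rangle$, which yields the decomposition $f_{W'}=\lambda^{(1)}\mu_1'+\lambda^{(2)}\mu_2'$. To kill $\lambda^{(2)}$, use $b_0\cdot \mathfrak{m}=0$ (from Proposition \ref{mainprosec2}): applying $\Gamma$ gives $(c_{\Gamma}b_0'+f_{W'})\cdot \mathfrak{m}'=0$, and since $b_0'\cdot\mathfrak{m}'=\mu_1'\cdot\mathfrak{m}'=0$ in the normalized structure, the only surviving term is $\lambda^{(2)}(\mu_2'\cdot\mathfrak{m}')$; multiplying by $f_1'$ and invoking $\mu_2'\cdot f_1'=\mu_1'\ne 0$ forces $\lambda^{(2)}=0$. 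The same trick applied to $\mu_1\cdot\mathfrak{m}=0$ (now with $\Gamma(\mu_1)=f_{1,1}\mu_1'+f_{1,2}\mu_2'$) gives $f_{1,2}=0$. Finally, since part (i) implies $\Gamma(Ker(F))=Ker(F')$, the images $\Gamma(\mu_1)=f_{1,1}\mu_1'$ and $\Gamma(\mu_2)=f_{2,1}\mu_1'+f_{2,2}\mu_2'$ must form a basis of $\langle\mu_1',\mu_2'\rangle$, so $f_{1,1}f_{2,2}\ne 0$.

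For part (iii), I would simply unpack the identity from (i) on the pairs $(e_k,e_l)$. In the normalized structure, $F(e_k,e_l)=\delta_{k,l}$, while $F'(e_{l'}',g_i')=0$ (because $g_i'\in V'_{(s)}=Ker(F'|_{V'^{(s)}})$ and $e_{l'}'\in V'^{(s)}$) and $F'(e_{l'}',\mu_v')=0$. Substituting the expansion (4.14) into $F'(\Gamma(e_k),\Gamma(e_l))=c_{\Gamma}\delta_{k,l}$, only the $a$-terms survive, giving $\sum_{l'} a_{k,l'}a_{l,l'}=c_{\Gamma}\delta_{k,l}$, i.e., $AA^{\intercal}=c_{\Gamma}I_p$; since $A$ is square this is equivalent to $A^{\intercal}A=c_{\Gamma}I_p$. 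The only real obstacle in the argument is part (ii): the key subtlety is that $\mu_1$ and $\mu_2$ are \emph{a priori} interchangeable inside $Ker(F)$, and what breaks the symmetry is precisely the normalization choice $\mu_2\cdot f_1=\mu_1$, which must be transported across $\Gamma$ to distinguish the two coefficients $\lambda^{(1)},\lambda^{(2)}$ and similarly $f_{1,1},f_{1,2}$.
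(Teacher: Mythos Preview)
Your proposal is correct and follows essentially the same route as the paper: parts (ii) and (iii) are virtually identical to the paper's argument (the paper multiplies $\Gamma(b_0)$ and $\Gamma(\mu_1)$ by $\Gamma(f_1)=c_1f_1'+f_{1,W'}$ rather than by $f_1'$ directly, but since $f_{1,W'}\in V'^{(1)}$ annihilates $\mu_2'$ this makes no difference). The only genuine variation is in (i): the paper computes directly by writing $a\cdot b=F(a,b)b_0+(ab)_{|_W}$ and applying $y_0'\circ\Gamma$, whereas you invoke uniqueness of the invariant bilinear form---both are short and valid, yours being slightly more conceptual while the paper's avoids checking that the pullback form satisfies the invariance axiom.
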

\begin{proof}
(i)  Let $a \cdot b= F(a,b) \cdot b_{0}+(a\cdot b)_{|_{W}}$. Then under the notation of Lemma \ref{y0}  we have:
\begin{align*}
F'(\Gamma(a),\Gamma(b))&=y_{0}'(\Gamma(a\cdot b))=F(a,b) \cdot y_{0}'(\Gamma( b_{0}))\\
&=F(a,b) \cdot y_{0}'(c_{\Gamma} \cdot b_{0}'+f_{W'})=c_{\Gamma} \cdot F(a,b).
\end{align*}
(ii) The first assertion follows from $b_{0} \in \mathfrak{m}^2$ and $\Gamma(\mathfrak{m}^2)=(\mathfrak{m}')^2 \subseteq \langle \mu_{1}',\mu_{2}',b_{0}'\rangle$.  For $\lambda^{(2)}$, from $f_{1} \cdot b_{0}=0$ we have:
\begin{align*}
0=\Gamma(b_{0}) \cdot \Gamma(f_{1})=&(c_{\Gamma}\cdot b_{0}'+\lambda^{(1)}\cdot \mu_{1}'+\lambda^{(2)} \cdot \mu_{2}') \cdot (c_{1}\cdot f_{1}'+f_{1,W'})= c_{1} \cdot \lambda^{(2)} \cdot \mu_{1}',
\end{align*}
concluding that $\lambda^{(2)}=0$ as $c_{1}$ is nonzero in $\mathbb{K}$. For $f_{1,2}$, from $f_{1} \cdot \mu_{1}=0$ we have
\begin{align*}
0=\Gamma(\mu_{1}) \cdot \Gamma(f_{0})=&(f_{1,1}\cdot \mu_{1}'+f_{1,2} \cdot \mu_{2}') \cdot (c_{1}\cdot f_{1}'+f_{1,W'})=c_{1} \cdot f_{1,2}\cdot \mu_{1}',
\end{align*} 
concluding that $f_{1,2}=0$,  hence $f_{1,1} \not =0$ and $f_{2,2} \not =0$.\par
(iii) Using (i) and (4.14) we have :
\begin{align*}
\delta_{k,k'} \cdot c_{\Gamma} &=c_{\Gamma} \cdot F(e_{k},e_{k'})=F'(\Gamma(e_{k}),\Gamma(e_{k'}))\\
&= \sum_{l,l'=1}^{p} \delta_{l,l'} \cdot a_{k,l} \cdot a_{k',l'}=\sum_{l=1}^{p} a_{k,l} \cdot a_{k',l},
\end{align*}
concluding that $A'\cdot A=c_{\Gamma}  \cdot I_{p}$.
\end{proof}
Now we are ready to prove Theorem 4.9 (i.c) and (ii) when $s \geqslant 1$.
\begin{proof}[Proof of Theorem 4.9 (i.c),(ii) when $s \geqslant 1$]
(i.c) Computing $\Gamma(e_{k} \cdot e_{k'})=\Gamma(e_{k}) \cdot \Gamma(e_{k'})$:
\begin{align*}
LHS&=\Gamma(\delta_{k,k'} \cdot b_{0}+\lambda_{k,k'} \cdot \mu_{1})=\delta_{k,k'} \cdot c_{\Gamma} \cdot b_{0}'+(\delta_{k,k'} \cdot \lambda^{(1)}+\lambda_{k,k'} \cdot f_{1,1}) \cdot \mu_{1}'.\\
RHS&=(\sum_{l,l'=1}^{p} a_{k,l} \cdot a_{k',l'} \cdot \delta_{l,l'}) \cdot b_{0}'+ (\delta' \cdot b_{k,s} \cdot b_{k',s}+\sum_{l,l'=1}^{p} a_{k,l} \cdot \lambda_{l,l'}' \cdot a_{k',l'}) \cdot \mu_{1}'.
\end{align*}
where $\delta' \not =0$ if $(g_{s}')^{2}=\mu_{1}$, i.e., the action is of Type $B_{2}$. And we claim in this case $b_{k,s}=0$, implying $\delta' \cdot b_{k,s} \cdot b_{k',s'}=0$. This follows from computing $\Gamma(e_{k} \cdot g_{s})=\Gamma(e_{k}) \cdot \Gamma(g_{s})$ from two sides.\par
Now from $LHS=RHS$ combined with $A' \cdot A=c_{\Gamma} \cdot I_{p}$ we have the equation:
\begin{equation*}
 \Lambda'= (\frac{\lambda^{(1)}}{c_{\Gamma}}) \cdot I_{p}+(\frac{f_{1,1}}{c_{\Gamma}^{2}}) \cdot A'\Lambda A
\end{equation*}
hence from \cite[Chapter XI \S 3]{matrices} we conclude that  $\Lambda$ and $\Lambda'$ differ up to the listed elementary transformations.\par
(ii) If the two actions have the same normalized type with $l(\mathbb{G}_{a}^{n},Q)=l(\mathbb{G}_{a}^{n},Q')$, $s=s'$, $\Lambda$ and $\Lambda'$ differ up to elementary transformations, then by our normalization result, to give the isomorphism between actions it suffices to find a new set of normalized elements of $(R,W)$ having the canonical matrix which equals $\Lambda'$. In the following we find the new normalized set $\{\mu_{1}^{(0)},\mu_{2}^{(0)},g_{j}^{(0)},e_{k}^{(0)},f_{i}^{(0)},b_{0}^{(0)}\} $ case by case.\par
1). (up to a permutation of blocks)\par Note that any permutation of blocks can be induced by a permutation of $\{e_{k}:1 \leqslant k \leqslant p\}$. Hence the new set of normalized elements can be defined through a suitable permutation of $e_{k}$ and identity on other elements.  \par
2). (up to adding a scalar matrix) we assume $\Lambda'=\Lambda+h \cdot I_{p}$ for some nonzero $h \in \mathbb{K}$.\par
In this case it suffices to find a new set of normalized elements with $b_{0}^{(0)}=b_{0}-h \cdot \mu_{1}$, $\mu_{1}^{(0)}=\mu_{1}$ and $e_{k}^{(0)}=e_{k}$. To find the set we run our normalization in Section 4.1.2 starting with $\mu_{1},\mu_{2},b_{0}^{(0)}$ and set $e_{k},f_{i},g_{j}$ to be the initial elements we take at each step of the normalization. Then one can easily check that after running the normalization of each type, the new set of normalized elements meets our need.\par
3). (up to a scalar multiplication) we assume $\Lambda'=h \cdot \Lambda$ for some nonzero $h \in \mathbb{K}$.\par
In this case it suffices to find a new set of normalized elements with $b_{0}^{(0)}=c_{\Gamma} \cdot b_{0},e_{k}^{(0)}=\sqrt{c_{\Gamma}} \cdot e_{k},\mu_{1}^{(0)}=f_{1,1}\cdot \mu_{1}$ for some nonzero $c_{\Gamma},f_{1,1} \in \mathbb{K}$ s.t. $c_{\Gamma}=h \cdot f_{1,1}$. To find the elements, we define $f_{i}^{(0)}=x_{i} \cdot f_{i}$, $g_{j}^{(0)}=y_{j} \cdot g_{j}$ and $\mu_{2}^{(0)}=y_{0} \cdot \mu_{2}$. Then the condition ($f_{i}^{(0)} \cdot g_{i-1}^{(0)}=\mu_{1}^{(0)},f_{i}^{(0)} \cdot g_{i}^{(0)}=b_{0}^{(0)}$) and extra conditions in different types shown in Section 4.1.2 gives a system of equations for each type:\\
(Type $A_{2}$)$\begin{cases}
x_{i} \cdot y_{i}=c_{\Gamma}\\
x_{i} \cdot y_{i-1}=f_{1,1}\\
x_{1}^2 =y_{0}
\end{cases}$
(Type $B_{2}$)$\begin{cases}
x_{i} \cdot y_{i}=c_{\Gamma}\\
x_{i} \cdot y_{i-1}=f_{1,1}\\
x_{1}^2 =y_{0} \\
y_{s}^{2}=f_{1,1}
\end{cases}$\\
(Type $C_{2}$)$\begin{cases}
x_{i} \cdot y_{i}=c_{\Gamma}\\
x_{i} \cdot y_{i-1}=f_{1,1}\\
x_{1}^2=y_{0 }\\
x_{s+1}^{2}=c_{\Gamma}
\end{cases}$ with the condtion $c_{\Gamma}=h \cdot f_{1,1}$ for each type.\par
For these equations one can easily check the existence of solutions, which enables us to find the set of normalized elements we need.\par 
\end{proof}
\case If $s=0$, i.e., they are of Type $B_{0}$ or $C_{0}$, then we can use similar method in Case 1 to prove (i.c) and also to prove (ii) when the two canonical matrices differ from a permutation of blocks or adding a scalar matrix. Hence it sufficies to prove (ii) when $\Lambda$ and $\Lambda'$ differ from a scalar multiplication.\par
As is Case 1, it sufficies to find a new set of normalized elements $b_{0}^{(0)}=c_{\Gamma} \cdot b_{0},e_{k}^{(0)}=\sqrt{c_{\Gamma}} \cdot e_{k},\mu_{1}^{(0)}=f_{1,1}\cdot \mu_{1}$ for some nonzero  $c_{\Gamma},f_{1,1} \in \mathbb{K}$ s.t. $c_{\Gamma}=h \cdot f_{1,1}$.\par 
Now if the action is of Type $B_{0}$  we set
\begin{align*}
\mu_{1}^{(0)}=\mu_{1},\mu_{2}^{(0)}=\mu_{2},e_{i}=\sqrt{h} \cdot e_{i}, b_{0}^{(0)}=h \cdot b_{0}.
\end{align*}
Then one can check $\{\mu_{1}^{(0)},\mu_{2}^{(0)},e_{i}^{(0)},b_{0}^{(0)}: 1 \leqslant i \leqslant p\}$ is the normalized set we need.\par
If the action is of Type $C_{0}$ we set
\begin{align*}
\mu_{1}^{(0)}=\frac{\mu_{1}}{h^3},\mu_{2}^{(0)}=\frac{\mu_{2}}{h^2},e_{i}^{(0)}=\frac{e_{i}}{h},e_{p+1}^{(0)}=\frac{e_{p+1}}{h},b_{0}^{(0)}=\frac{b_{0}}{h^2}.
\end{align*}
Then one can check  $\{\mu_{1}^{(0)},\mu_{2}^{(0)},e_{i}^{(0)},e_{p+1}^{(0)},b_{0}^{(0)}:1 \leqslant i \leqslant p\}$ is the normalized set we need, concluding the proof of Theorem 4.9.
\end{mycases}
As an application of our classification, we now prove Theorem \ref{determine_by_degeneration}.
\begin{proof}[Proof of Theorem 1.8]
	(i) Recall in Section 3.1 we have constructed $(R^{(1)},\mathfrak{m}^{(1)})$ or $(R^{(1)},V^{(1)},F^{(1)})$ to be the corresponding local algebra (and invariant linear form on it) of the obtained action $(G^{(1)},Q^{(1)})$ in Theorem 1.6. Hence combined with the algebraic version of the flow chart, for a normalized structure $\{(V^{(k)},V_{(k)}):k \leqslant s\}$ of an additive action $(R,W,F)$, if the final output of the flow chart is $(x,t,G^{(t)},Q^{(t)})$, then the output action $(G^{(t)},Q^{(t)})$ is represented by
	\begin{align*}
	(R^{(t)},\mathfrak{m}^{(t)})&=(V^{(t)} \oplus \langle 1_{R} \rangle,V^{(t)}) &&\text{if $Q^{(t)}$ is a projective space,}  \\
	(R^{(t)},V^{(t)},F^{(t)})&=(V^{(t)} \oplus \langle b_{0} \rangle \oplus \langle 1_{R} \rangle,V^{(t)},F_{|_{V^{(t)}}}) &&\text{if $Q^{(t)}$ is a hyperquadric,}
	\end{align*}
	where $t=s$ when $x=A$ and $t=s+1$ when $x=B$ or $C$. Then (i) follows by Remark \ref{multi} and by checking the multiplications in $V^{(t)}$ in different types as shown in Lemma 4.2,4.4 and 4.6.\par
	(ii) $l(\mathbb{G}_{a}^{n},Q) \leqslant 3$ follows from Proposition 4.8 and our normalization result of each types. $codim(Q^{(k+1)},Q^{(k)})=1$ follows from Proposition 4.1.  
	\par
	(iii) 
	 Now for two actions if they are equivalent induced by $\Gamma:R \mapsto R'$ then from Theorem 4.9 (i.a) they are of the same normalized type and $t=t',l(\mathbb{G}_{a}^{n},Q)=l(\mathbb{G}_{a}^{n},Q')$. Moreover as $\Gamma(V^{(k)})=V'^{(k)}$ we conclude that $\Gamma_{|_{R^{(t)}}}$ induces an isomorphism between the output actions of the two actions, which proves the only if part of Theorem 1.8.\par
	 For the converse, it suffices to check the condition in Theorem 4.9 (ii). If $s=s'=0$ then they are of the same type $x_{0}$. If $s=s' \geqslant 1$, then as the output action is equivalent, $Q^{(t)}$ and $\widetilde{Q}^{(t')}$
	 are either both hyperquadrics or projective spaces, hence they are of the same type $x_{1}$ or $x_{2}$.\par 
	   Now if they are of Type $A_{2},B_{2}$ or Type $C_{2}$, then consider the isomorphism between local algebras induced by the equivalence of the output actions:
	   \begin{align*}
	   \Gamma^{(t)}:(R^{(t)},V^{(t)},V_{(t)}) \mapsto (R'^{(t)},V'^{(t)},V'_{(t)}),
	   \end{align*} using the same method in the proof of Theorem 4.9 (i.c), $\Gamma^{(t)}$ will induce elementary transformations between the canonical matrices of the two actions. Therefore by Theorem 4.9 (ii) we conclude that the two actions are equivalent. \par 
\end{proof}
\subsection{Classification of actions with unfixed singularities (II): $dim(Q) \leqslant 4$ }
In this subsection we consider the case when $dim(Q) \leqslant 4$. Equivalently for a triple $(R,W,F)$ we have $dim(W) \leqslant 4$.\par 
In the folowing we always take a basis of $Ker(F)=\langle \mu_{1},\mu_{2}\rangle $ satisfying Lemma 3.5 (i). We also take $(V^{(1)},V_{(1)})$ defined in Section 3. Then we give the classification case by case.\\
\begin{mycases}
\case $dim(W)=4$ \\
\subcase $Ker(F) \cdot Ker(F) \not =0$. Note that in the proof of Case 2 of Proposition \ref{mainprosec2} we only need to assure the number of $e_{k}$ is at least two, hence this case is just the 4-dimensional version of Type $B_{0}$.\\
\subcase $Ker(F) \cdot Ker(F)=0$ and $Ker(F)=V_{(1)}$. We have the following:
\begin{align*}
Ker(F)=V_{(1)} \subsetneqq V^{(1)} \subsetneqq W,
\end{align*}
with $codim(V_{(1)},V^{(1)})=1$. In ths case, we can choose a $g_{1} \in V^{(1)} \backslash V_{(1)}$ such that $F(g_{1},g_{1})=1$ as $F(g_{1},g_{1}) \not =0$. Then for any $f_{1} \in W \backslash V^{(1)}$, up to relplacing it by $f_{1}-F(f_{1},g_{1})^{-1} \cdot g_{1}$, we can have $f(f_{1},g_{1})=0$. Finally $F(f_{1},f_{1}) \not =0$ as $f_{1} \not \in Ker(F)$, hence we can have $F(f_{1},f_{1})=1$. Then we divide it into two more subcases.\par
$(ii.1)$ $dim(V^{(1)} \cdot W)=3$ then there exist a basis of $Ker(F)=\langle \mu_{1},\mu_{2} \rangle$, $f_{1}\in W \backslash V^{(1)},g_{1} \in V^{(1)} \backslash Ker(F) $ and $b_{0} \in \mathfrak{m} \backslash W$ s.t. 
\begin{align*}
g_{1}^{2}=b_{0},\,g_{1} \cdot f_{1}=\mu_{2},\,f_{1}^{2}=b_{0}+\lambda \cdot \mu_{2},\,f_{1} \cdot \mu_{2}=\mu_{1},
\end{align*}
for some $\lambda \in \mathbb{K}$.\par
 To show this, from $g_{1} \cdot g_{1} \not \in Ker(F)$, $g_{1} \cdot Ker(F)=0$, $f_{1} \cdot \mu_{2} \in \langle  \mu_{1} \rangle$ and our assumption $dim(V^{(1)} \cdot W)=3$ we conclude that $g_{1} \cdot f_{1}=c_{2} \cdot \mu_{2}+c_{1} \cdot \mu_{1}$ for some nonzero $c_{2} \in \mathbb{K}$. Now we can normalize in the following steps:\\
 First we define $b_{0}=g_{1}^{2}$ then we replace $\mu_{2}$ by $f_{1} \cdot g_{1}$,  replace $f_{1}$ by $f_{1}- \frac{V_{1}(f_{1},f_{1})}{2V_{1}(f_{1},\mu_{2 })} \cdot \mu_{2}$ and finally we replace $\mu_{1}$ by $f_{1} \cdot \mu_{2}$.\par
 Then the classification of this case follows:\\	
\begin{equation*}
M(F)=
\left(\begin{matrix}
0  &0      & 0      & 0    \\  
0  &0      & 0     &0 \\
0  &0      & 1    &0 \\
0  &0      & 0    &1
\end{matrix}\right) , \,\, W=\langle \mu_{1},\mu_{2} \rangle \oplus \langle g_{1},f_{1} \rangle 
\end{equation*}
and $R$ is isomorphic to\\
$\mathbb{K}[\mu_{1},\mu_{2},g_{1},f_{1}]/(\mu_{1} \cdot W,\mu_{2} \cdot \mu_{2},\mu_{2} \cdot g_{1},f_{1} \cdot \mu_{2}-\mu_{1},f_{1} \cdot g_{1}-\mu_{2},f_{1}^{2}-g_{1}^{2}-\lambda \cdot \mu_{2})$\par
Moreover for the coefficient $\lambda \in \mathbb{K}$ we have the following  uniqueness result which is easy to check.
\begin{proposition}
Two actions of Case (ii.1) with coefficients $\lambda$ and $\lambda'$ respectively are equivalent if and only if $\lambda=\pm \lambda'$.
\end{proposition}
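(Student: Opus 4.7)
The plan is to mimic the uniqueness argument of Theorem 4.9, but carried out by hand in this small case, since the output of the flow chart is not in the normalized form of Section 4.1.2. Suppose $\Gamma : R \to R'$ is a local $\mathbb{K}$-algebra isomorphism with $\Gamma(W)=W'$, where $(R,W,F)$ and $(R',W',F')$ are two normalized triples in Case $(ii.1)$ with coefficients $\lambda$ and $\lambda'$ respectively. By Lemma \ref{F_det_by_R_W}, there is a nonzero $c_\Gamma \in \mathbb{K}$ with $F'(\Gamma(a),\Gamma(b)) = c_\Gamma \cdot F(a,b)$ for all $a,b \in R$. Because the subspaces $Ker(F)$, $V_{(1)}$, $V^{(1)}$ are canonically attached to $(R,W,F)$ (in the algebraic version of the flow chart), $\Gamma$ must send each of them to its primed counterpart.

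I would then write
\[
\Gamma(g_1) = \alpha g_1' + \beta_1 \mu_1' + \beta_2 \mu_2', \qquad \Gamma(f_1) = \gamma f_1' + \delta g_1' + \epsilon_1 \mu_1' + \epsilon_2 \mu_2',
\]
using $\Gamma(g_1)\in V'^{(1)} = \langle g_1',\mu_1',\mu_2'\rangle$ and $\Gamma(f_1) \in W'$. The conditions $F(g_1,g_1)=F(f_1,f_1)=1$, $F(g_1,f_1)=0$ together with the scaling by $c_\Gamma$ give $\alpha^2 = c_\Gamma$, $\gamma^2 + \delta^2 = c_\Gamma$, and $\alpha\delta = 0$. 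Since $\alpha \neq 0$, this forces $\delta = 0$ and $\gamma^2 = \alpha^2$, hence $\gamma = \pm \alpha$.

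The next step is to read off how $\Gamma$ acts on $b_0,\mu_1,\mu_2$ from the algebra relations. Computing $\Gamma(g_1^2)$, $\Gamma(g_1 f_1)$ and $\Gamma(f_1\mu_2)$ in the normalized algebra $R'$ — using $\mu_i' \cdot W' \subseteq \langle \mu_1'\rangle$, $\mu_2'\cdot g_1' = 0$, and $\mu_2'\cdot f_1' = \mu_1'$ — yields
\[
\Gamma(b_0) = \alpha^2 b_0', \qquad \Gamma(\mu_2) = \alpha\gamma\,\mu_2' + \beta_2\gamma\,\mu_1', \qquad \Gamma(\mu_1) = \alpha\gamma^2\,\mu_1'.
\]
Finally, expanding $\Gamma(f_1^2) = \Gamma(b_0 + \lambda\mu_2)$ two ways and comparing the coefficient of $\mu_2'$ gives $\lambda\alpha\gamma = \gamma^2\lambda'$, i.e.\ $\lambda' = (\alpha/\gamma)\lambda = \pm\lambda$. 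For the converse, the identity handles $\lambda'=\lambda$, while for $\lambda'=-\lambda$ one verifies directly that $g_1\mapsto g_1'$, $f_1\mapsto -f_1'$, $\mu_2\mapsto -\mu_2'$, $\mu_1\mapsto \mu_1'$, $b_0\mapsto b_0'$ extends to a local algebra isomorphism satisfying all the relations.

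The main obstacle is bookkeeping rather than conceptual: one has to pin down all the coefficients $\beta_i,\epsilon_i$ just enough to extract the essential identity $\lambda'=(\alpha/\gamma)\lambda$, while observing that the ratio $\alpha/\gamma$ is constrained by $\alpha^2=\gamma^2$ to the two values $\pm 1$. Once $\delta=0$ is established from the orthogonality of $g_1',f_1'$, the remaining computation is straightforward.
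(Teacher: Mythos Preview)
Your proposal is correct. The paper does not actually give a proof of this proposition; it merely states that the result ``is easy to check.'' Your argument supplies precisely those details, following the same template as the uniqueness part of Theorem~4.9: exploit that $\Gamma$ preserves the canonical filtration $Ker(F)\subseteq V^{(1)}\subseteq W$, use the scaling relation $F'(\Gamma(a),\Gamma(b))=c_\Gamma F(a,b)$ to pin down the coefficients of $\Gamma(g_1)$ and $\Gamma(f_1)$ modulo $Ker(F')$, and then compare both sides of $\Gamma(f_1^2)=\Gamma(f_1)^2$ to extract $\lambda'=(\alpha/\gamma)\lambda$. The key observation that $\alpha\delta=0$ forces $\delta=0$ and hence $\alpha/\gamma=\pm 1$ is exactly the mechanism one expects, and your explicit isomorphism for the converse direction is the natural one.
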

$(ii.2)$ $dim(V^{(1)} \cdot W)=2$. Then choosing $b_{0}=g_{1}^{2}$ we have $V^{(1)} \cdot W \subseteq \langle \mu_{1},b_{0} \rangle$ and $b_{0} \cdot W=0$. 
Moreover we see $f_{1} \cdot \mu_{2}=c\cdot \mu_{1}$ for some nonzero $c \in \mathbb{K}$. And we set $f_{1}^{2}=b_{0}+V_{1}(f_{1},f_{1}) \cdot \mu_{1}+d_{1} \cdot \mu_{2}$.
Now we can normalize through the follwoing steps:
\begin{align*}
g_{1} &\rightarrow g_{1}-c^{-1} \cdot V_{1}(g_{1},f_{1}) \cdot \mu_{2} &&\text{to make $g_{1} \cdot f_{1}=0$}\\
f_{1} &\rightarrow f_{1}-\frac{V_{1}(f_{1},f_{1})}{2c} \cdot \mu_{2} &&\text{to make $f_{1}^{2}=b_{0}+d_{1} \cdot \mu_{2}$}
\end{align*} 
and if $d_{1} \not =0$ (i.e., $dim(\mathfrak{m}^{2}=3)$) we replace $\mu_{2}$ by $d_{1} \cdot \mu_{2}$ to make $f_{1}^{2}=b_{0}+\mu_{2}$ then replace $\mu_{1}$ by $f_{1} \cdot \mu_{2}$ to keep $f_{1} \cdot \mu_{2}=\mu_{1}$.
This enables us to give the classification of this case:
\begin{equation*}
M(F)=
\left(\begin{matrix}
0  &0      & 0      & 0    \\  
0  &0      & 0     &0 \\
0  &0      & 1    &0 \\
0  &0      & 0    &1
\end{matrix}\right) , \,\, W=\langle \mu_{1},\mu_{2} \rangle \oplus \langle g_{1},f_{1} \rangle 
\end{equation*}
and $R$ is isomorphic to\\
$\mathbb{K}[\mu_{1},\mu_{2},g_{1},f_{1}]/(\mu_{1} \cdot W,\mu_{2} \cdot \mu_{2},\mu_{2} \cdot g_{1},f_{1} \cdot \mu_{2}-\mu_{1},f_{1} \cdot g_{1},f_{1}^{2}-g_{1}^{2}- \delta \cdot \mu_{2})$\\
where $\delta$ is the same as we define in Section 4.1.2.\\
\subcase $Ker(F) \cdot Ker(F)=0$ and $V_{(1)}=V^{(1)}$. \par
Then we can choose  $f_{1}\in W \backslash V^{(1)},g_{1} \in V^{(1)} \backslash Ker(F) $ s.t. $F(f_{1},g_{1})=1$ and $F(f_{1},f_{1})=F(g_{1},g_{1})=0$. And we divide it into two more subcases.\\\\
($iii.1$) $dim(\mathfrak{m}^{2})=2$. We set $b_{0}=g_{1} \cdot f_{1}$ and replace $\mu_{1}$ by $f_{1} \cdot \mu_{2}$, then we replace $f_{1}$ by $f_{1}-\frac{\mu_{2}}{2 V_{1}(f_{1},\mu_{2})}$ to make $f_{1}^{2}=0$. Thus we have:
\begin{align*}
g_{1} \cdot f_{1}=b_{0},f_{1} \cdot \mu_{2}=\mu_{1},f_{1}^{2}=0, g_{1}^{2}=h \cdot \mu_{1}
\end{align*}
for some $h\in \mathbb{K}$. Now if $h \not=0$ then we can furtherly make $g_{1}^{2}=\mu_{1}$ through replacing elements as the following:
\begin{align*}
\mu_{1}=\sqrt{h} \cdot \mu_{1},\,\mu_{2}=h^{\frac{1}{4}} \cdot \mu_{2},\,
f_{1}=h^{\frac{1}{4}} \cdot f_{1},\,g_{1}=h^{-\frac{1}{4}} \cdot g_{1},b_{0}=b_{0}.
\end{align*}
Then our classification of this case follows:
\begin{equation*}
M(F)=
\left(\begin{matrix}
0  &0      & 0      & 0    \\  
0  &0      & 0     &0 \\
0  &0      & 0     &1 \\
0  &0      & 1     &0
\end{matrix}\right) , \,\, W=\langle \mu_{1},\mu_{2} \rangle \oplus \langle g_{1},f_{1} \rangle 
\end{equation*}
and $R$ is isomorphic to\\
$\mathbb{K}[\mu_{1},\mu_{2},g_{1},f_{1}]/(\mu_{1} \cdot W,\mu_{2} \cdot \mu_{2},\mu_{2} \cdot g_{1},f_{1} \cdot \mu_{2}-\mu_{1},g_{1}^{2}-\mu_{1},f_{1}^{2})$\\
or\\
$\mathbb{K}[\mu_{1},\mu_{2},g_{1},f_{1}]/(\mu_{1} \cdot W,\mu_{2} \cdot \mu_{2},\mu_{2} \cdot g_{1},f_{1} \cdot \mu_{2}-\mu_{1},g_{1}^{2},f_{1}^{2})$, depending on whether $V^{(1)} \cdot V^{(1)}$ equals to zero or not.
\\\\ 
$(iii.2)$ $dim(\mathfrak{m}^{2})=3 $. We divide it into two more subcases.\\
If $dim(V^{(1)} \cdot W)=2$ then we have:
\begin{equation*}
M(F)=
\left(\begin{matrix}
0  &0      & 0      & 0    \\  
0  &0      & 0     &0 \\
0  &0      & 0    &-1\\
0  &0      & -1    &0
\end{matrix}\right) , \,\, W=\langle \mu_{1},\mu_{2} \rangle \oplus \langle g_{1},f_{1} \rangle 
\end{equation*}
and $R$ is isomorphic to\\
$\mathbb{K}[\mu_{1},\mu_{2},g_{1},f_{1}]/(\mu_{1} \cdot W,\mu_{2} \cdot \mu_{2},\mu_{2} \cdot g_{1},f_{1} \cdot \mu_{2}-\mu_{1},g_{1}^{2}-\mu_{1},f_{1}^{2}-\mu_{2})$\\
If $dim(V^{(1)} \cdot W)=3$ then we have:
\begin{equation*}
M(F)=
\left(\begin{matrix}
0  &0      & 0      & 0    \\  
0  &0      & 0     &0 \\
0  &0      & 0    &1 \\
0  &0      & 1    &0
\end{matrix}\right) , \,\, W=\langle \mu_{1},\mu_{2} \rangle \oplus \langle g_{1},f_{1} \rangle 
\end{equation*}
and $R$ is isomorphic to\\
$\mathbb{K}[\mu_{1},\mu_{2},g_{1},f_{1}]/(\mu_{1} \cdot W,\mu_{2} \cdot \mu_{2},\mu_{2} \cdot g_{1},f_{1} \cdot \mu_{2}-\mu_{1},g_{1}^{2}-\mu_{2},f_{1}^{2}-\mu_{2})$\\
or \\
$\mathbb{K}[\mu_{1},\mu_{2},g_{1},f_{1}]/(\mu_{1} \cdot W,\mu_{2} \cdot \mu_{2},\mu_{2} \cdot g_{1},f_{1} \cdot \mu_{2}-\mu_{1},g_{1}^{2}-\mu_{2},f_{1}^{2})$\\
where one can easily check these two actions are not equivalent.
\case $dim(W)=3$ Then we have two subcases as follows.
\subcase $Ker(F) \cdot Ker(F) \not =0$ then we have:
\begin{equation*}
M(F)=
\left(\begin{matrix}
0      & 0     &0 \\
0      & 0    &0 \\
0      & 0    &1
\end{matrix}\right) , \,\, W=\langle \mu_{1},\mu_{2} \rangle \oplus \langle e \rangle 
\end{equation*}
and $R$ is isomorphic to\\
$\mathbb{K}[\mu_{1},\mu_{2},e]/(\mu_{1} \cdot W,\mu_{2} \cdot \mu_{2}-\mu_{1},\mu_{2} \cdot e,e^{3}-\mu_{1})$, if  $dim(\mathfrak{m}^{2})=2$\ and $\mathfrak{m}^{3} \not =0$.\\
$\mathbb{K}[\mu_{1},\mu_{2},e]/(\mu_{1} \cdot W,\mu_{2} \cdot \mu_{2}-\mu_{1},\mu_{2} \cdot e,e^{3})$, if  $dim(\mathfrak{m}^{2})=2$\ and $\mathfrak{m}^{3} =0$.\\
$\mathbb{K}[\mu_{1},\mu_{2},e]/(\mu_{1} \cdot W,\mu_{2} \cdot \mu_{2}-\mu_{1},\mu_{2} \cdot e,e^3-\mu_{2})$, if $dim(\mathfrak{m}^{2})=3$.\\
\subcase $Ker(F) \cdot Ker(F)=0$ then we have:\\
\begin{equation*}
M(F)=
\left(\begin{matrix}
0      & 0     &0 \\
0      & 0    &0 \\
0      & 0    &1
\end{matrix}\right) , \,\, W=\langle \mu_{1},\mu_{2} \rangle \oplus \langle e \rangle 
\end{equation*}
and $R$ is isomorphic to\\
$\mathbb{K}[\mu_{1},\mu_{2},e]/(\mu_{1} \cdot W,\mu_{2} \cdot \mu_{2},\mu_{2} \cdot e-\mu_{1},e^{3}-\mu_{1})$, if  $dim(\mathfrak{m}^{2})=2$ and $\mathfrak{m}^{3} \not=0$.\\
$\mathbb{K}[\mu_{1},\mu_{2},e]/(\mu_{1} \cdot W,\mu_{2} \cdot \mu_{2},\mu_{2} \cdot e-\mu_{1},e^{3})$, if  $dim(\mathfrak{m}^{2})=2$ and $\mathfrak{m}^{3}=0$.\\
$\mathbb{K}[\mu_{1},\mu_{2},e]/(\mu_{1} \cdot W,\mu_{2} \cdot \mu_{2},\mu_{2} \cdot e-\mu_{1},e^3-\mu_{2})$, if $dim(\mathfrak{m}^{2})=3$
\end{mycases}

\section*{Acknowledgments}
 I am grateful to Baohua Fu for introducing the problem, guidance and revising this paper. I am also grateful to Zhijun Luo for helpful discussions. I would like to thank the referee for many valuable comments and suggestions.

\end{document}